\theoremstyle{plain}
\newtheorem{Theorem}{Theorem}[section]
\newtheorem{Lemma}[Theorem]{Lemma}
\newtheorem{Corollary}[Theorem]{Corollary}
\newtheorem{Proposition}[Theorem]{Proposition}
\theoremstyle{definition}
\newtheorem{Definition}[Theorem]{Definition}
\newtheorem{Example}[Theorem]{Example}
\theoremstyle{remark}
\newtheorem{Remark}[Theorem]{Remark}
\newtheoremstyle{case}{3pt}{3pt}{\addtolength{\@totalleftmargin}{2.0em}\addtolength{\linewidth}{-1.5em}\parshape 1 1.5em \linewidth}{}{\kern-1.5em\itshape}{.}{ }{\thmnote{#3}}
\theoremstyle{case}
\newtheorem*{case}{}
\newtheoremstyle{caseinside}{3pt}{3pt}{\addtolength{\@totalleftmargin}{4.0em}\addtolength{\linewidth}{-3.0em}\parshape 1 3.0em \linewidth}{}{\kern-1.5em\itshape}{.}{ }{\thmnote{#3}}
\theoremstyle{caseinside}
\newtheorem*{caseinside}{}
\DeclareMathOperator{\aff}{aff} 
\DeclareMathOperator{\diag}{diag}
\DeclareMathOperator{\dist}{dist}
\DeclareMathOperator{\E}{E}     
\DeclareMathOperator{\GL}{\mathrm{GL}}
\DeclareMathOperator{\rank}{rank}
\DeclareMathOperator{\Skew}{Skew}
\DeclareMathOperator{\SO}{\mathrm{SO}}
\DeclareMathOperator{\spano}{span}  
\DeclareMathOperator{\Sym}{\mathrm{Sym}} 
\DeclareMathOperator{\Trans}{Trans} 
\renewcommand{\O}{\operatorname{{\mathrm O}}} 
\DeclarePairedDelimiter{\floor}{\lfloor}{\rfloor} 
\DeclarePairedDelimiter{\abs}{\lvert}{\rvert}
\DeclarePairedDelimiter{\angles}{\langle}{\rangle}
\DeclarePairedDelimiter{\norm}{\lVert}{\rVert}
\DeclarePairedDelimiter{\parens}{\lparen}{\rparen}
\DeclarePairedDelimiter{\bracks}{\lbrack}{\rbrack}
\DeclarePairedDelimiter{\braces}{\lbrace}{\rbrace}
\DeclarePairedDelimiter{\ceil}{\lceil}{\rceil}
\DeclarePairedDelimiterX{\iso}[2]{(}{)}{#1,#2} 
\DeclarePairedDelimiterX{\set}[2]{\{}{\}}{#1\,\delimsize|\,#2}
\DeclarePairedDelimiterX{\scalar}[2]{\langle}{\rangle}{#1,#2}
\newcommand{\@absonestar}[1]{\abs*{#1}_1}
\newcommand{\@absonenostar}[2][]{\abs[#1]{#2}_1}
\newcommand{\absone}{\@ifstar\@absonestar\@absonenostar}
\newcommand{\@zeronormstar}[2]{\norm*{#1}_{#2,0}}
\newcommand{\@zeronormnostar}[3][]{\norm[#1]{#2}_{#3,0}}
\newcommand{\zeronorm}{\@ifstar\@zeronormstar\@zeronormnostar}
\newcommand{\@nablanormstar}[2]{\norm*{#1}_{#2,\nabla}}
\newcommand{\@nablanormnostar}[3][]{\norm[#1]{#2}_{#3,\nabla}}
\newcommand{\nablanorm}{\@ifstar\@nablanormstar\@nablanormnostar}
\newcommand{\@nablazeronormstar}[2]{\norm*{#1}_{#2,0,\nabla,0}}
\newcommand{\@nablazeronormnostar}[3][]{\norm[#1]{#2}_{#3,\nabla,0}}
\newcommand{\nablazeronorm}{\@ifstar\@nablazeronormstar\@nablazeronormnostar}
\newcommand{\@newnormstar}[2]{\norm*{#1}_{#2,0,0}}
\newcommand{\@newnormnostar}[3][]{\norm[#1]{#2}_{#3,0,0}}
\newcommand{\newnorm}{\@ifstar\@newnormstar\@newnormnostar}
\newcommand{\@newnablanormstar}[2]{\norm*{#1}_{#2,\nabla,0,0}}
\newcommand{\@newnablanormnostar}[3][]{\norm[#1]{#2}_{#3,\nabla,0,0}}
\newcommand{\newnablanorm}{\@ifstar\@newnablanormstar\@newnablanormnostar}
\providecommand*{\cupdot}{%
  \mathbin{%
    \mathpalette\@cupdot{}%
  }%
}
\newcommand*{\@cupdot}[2]{%
  \ooalign{%
    $\m@th#1\cup$\cr
    \sbox0{$#1\cup$}%
    \dimen@=\ht0 %
    \sbox0{$\m@th#1\cdot$}%
    \advance\dimen@ by -\ht0 %
    \dimen@=.5\dimen@
    \hidewidth\raise\dimen@\box0\hidewidth
  }%
}
\providecommand*{\bigcupdot}{%
  \mathop{%
    \vphantom{\bigcup}%
    \mathpalette\@bigcupdot{}%
  }%
}
\newcommand*{\@bigcupdot}[2]{%
  \ooalign{%
    $\m@th#1\bigcup$\cr
    \sbox0{$#1\bigcup$}%
    \dimen@=\ht0 %
    \advance\dimen@ by -\dp0 %
    \sbox0{\scalebox{2}{$\m@th#1\cdot$}}%
    \advance\dimen@ by -\ht0 %
    \dimen@=.5\dimen@
    \hidewidth\raise\dimen@\box0\hidewidth
  }%
}
\newcommand\fourier[1]{%
\savestack{\tmpbox}{\stretchto{%
  \scaleto{%
    \scalerel*[\widthof{\ensuremath{#1}}]{\kern-.6pt\bigwedge\kern-.6pt}%
    {\rule[-\textheight/2]{1ex}{\textheight}}
  }{\textheight}%
}{0.5ex}}%
\stackon[1pt]{#1}{\tmpbox}%
}
\newcommand\+{\mkern2mu} 
\newcommand{\conj}[1]{\overline{#1}}
\newcommand{\dual}[1]{\widehat{#1}}
\newcommand{\fdot}{{{}\cdot{}}} 
\newcommand{\gdot}{\cdot} 
\newcommand{\gplus}[2]{#1\oplus #2}
\newcommand{\euler}{\mathrm e}
\newcommand{\iu}{\mathrm i}
\newcommand{\C}{{\mathbb C}}
\newcommand{\N}{{\mathbb N}}
\newcommand{\Q}{{\mathbb Q}}
\newcommand{\R}{{\mathbb R}}
\newcommand{\Z}{{\mathbb Z}}
\renewcommand{\epsilon}{\varepsilon} 
\renewcommand{\phi}{\varphi} 
\newcommand{\A}{\mathcal A}
\newcommand{\F}{\mathcal F} 
\newcommand{\G}{\mathcal G}
\newcommand{\id}{{id}} 
\newcommand{\MM}{M_0} 
\newcommand{\CC}{\mathcal C} 
\newcommand{\EE}{\mathfrak E} 
\newcommand{\RR}{\mathcal R} 
\newcommand{\SG}{\mathcal S} 
\newcommand{\T}{\mathcal T} 
\newcommand{\daff}{{d_{\mathrm{aff}}}} 
\newcommand{\rot}{L} 
\newcommand{\trans}{\tau} 
\newcommand{\transtwo}{\tau_2}
\newcommand{\proj}[1]{\bar{#1}}
\newcommand{\UIso}{U_{\mathrm{iso},0,0}}
\newcommand{\Uiso}[1]{U_{\mathrm{iso}}(#1)}
\newcommand{\Viso}[1]{V_{\mathrm{iso}}(#1)}
\newcommand{\zeroUiso}[1]{U_{\mathrm{iso},0}(#1)}
\newcommand{\Unewiso}[1]{U_{\mathrm{iso},0,0}(#1)}
\newcommand{\URot}{U_{\mathrm{rot},0,0}}
\newcommand{\Urot}[1]{U_{\mathrm{rot}}(#1)}
\newcommand{\zeroUrot}[1]{U_{\mathrm{rot},0}(#1)}
\newcommand{\Unewrot}[1]{U_{\mathrm{rot},0,0}(#1)}
\newcommand{\UTrans}{U_{\mathrm{trans}}}
\newcommand{\Utrans}[1]{U_{\mathrm{trans}}(#1)}
\newcommand{\UPer}{U_{\mathrm{per}}}
\newcommand{\UPerC}{U_{\mathrm{per},\C}}
\newcommand{\Per}{L_\mathrm{per}^\infty}
\newcommand{\semip}[2][]{p_{#2}^{#1}}
\newcommand{\newsemip}[2][]{p_{0,#2}^{#1}}
\newcommand{\semiq}[3][]{q_{#2,#3}^{#1}}
\newcommand{\newsemiq}[3][]{q_{0,#2,#3}^{#1}}
\newcommand{\xzeroone}{x_{0,1}} 
\newcommand{\xzerotwo}{x_{0,2}}
\newcommand{\eg}{\mbox{e.\,g.}\xspace}
\newcommand{\ie}{\mbox{i.\,e.}\xspace}
\begin{document}
\begin{center}
\begin{Large}
Korn type Inequalities for Objective Structures
\end{Large}
\\[0.5cm]
\begin{large}
Bernd Schmidt\footnote{Universität Augsburg, Germany, {\tt bernd.schmidt@math.uni-augsburg.de}} and 
Martin Steinbach\footnote{Universität Augsburg, Germany, {\tt steinbachmartin@gmx.de}} 
\end{large}
\\[0.5cm]
\today
\\[1cm]
\end{center}

\begin{abstract}
We establish discrete Korn type inequalities for particle systems within the general class of objective structures that represents a far reaching generalization of crystal lattice structures. For space filling configurations whose symmetry group is a general space group we obtain a full discrete Korn inequality. For systems with non-trivial codimension our results provide an intrinsic rigidity estimate within the extended dimensions of the structure. As their continuum counterparts in elasticity theory, such estimates are at the core of energy estimates and, hence, a stability analysis for a wide class of atomistic particle systems. 
\end{abstract}

2020 {\em Mathematics Subject Classification}. 
49J40, 
46E35, 
70C20, 
70J25, 
74Kxx. 

{\em Key words and phrases.} Objective (atomistic)  structures, Korn's inequality, stability. 

\tableofcontents

\section{Introduction}

%
%
%
%
%
%
%
%
%
%
%
%

The classical Korn inequality provides a quantitative rigidity estimate for $H^1$ functions in terms of their symmetrized gradient: If $\Omega \subset \R^d$ is bounded, connected and sufficiently regular (\eg, Lipschitz), then for all $u \in H^1(\Omega,\R^d)$ 
$$ \min \set[\big]{\norm{\nabla u - A}_{L^2(\Omega)}}{A \in \Skew(d)} 
   \le C \norm{(\nabla u)^T + \nabla u}_{L^2(\Omega)}, $$
cf., \eg, \cite{Ciarlet:88}. This inequality is of paramount importance in linear elasticity theory since the elastic energy of an infinitesimal displacement $u \colon \Omega \to \R^d$ dominates the $L^2$ norm of the symmetrized gradient $\frac12((\nabla u)^T + \nabla u)$ but not the full gradient $\nabla u$. As a consequence, the elastic energy controls the deviation of $\nabla u$ from a single skew symmetric matrix $A$ and hence the deviation of $u$ from an infinitesimal rigid motion of the form $x \mapsto Ax + c$. An immediate corollary is the corresponding qualitative rigidity result which states that $(\nabla u)^T + \nabla u = 0$ a.e.\ on $\Omega$ implies that $u(x) = Ax + c$ for some $A \in \Skew(d)$, $c \in \R^d$. 

For our purposes it turns out to be useful to re-write Korn's inequality in terms of projection-induced seminorms as follows. Denoting by $\pi_{\rm rot} \colon \R^{d \times d} \to \R^{d \times d}$, $\pi_{\rm rot} M = \frac12 (M^T + M)$ the orthogonal projection of $d \times d$ matrices onto their symmetric part (whose kernel is the set of infinitesimal rotations $\Skew(d)$) and by $\Pi_{\rm rot} \colon L^2(\Omega, \R^{d \times d}) \to L^2(\Omega, \R^{d \times d})$, $F \mapsto \Pi_{\rm rot} F$ the orthogonal projection whose kernel is the set of constant linearized rotations $\set{x \mapsto A}{A \in \Skew(d)}$, Korn's inequality reads 
$$ \norm{\Pi_{\rm rot} \nabla u}_{L^2(\Omega)} 
   \le C \norm{\pi_{\rm rot} \nabla u}_{L^2(\Omega)}. $$
In terms of $\Pi_{\rm iso} \colon L^2(\Omega, \R^d) \to L^2(\Omega, \R^d)$, $u \mapsto \Pi_{\rm iso} u$, the orthogonal projection whose kernel is the set of linearized isometries $\set{x \mapsto Ax + c}{A \in \Skew(d),\ c \in \R^d}$, it can also be rephrased as 
$$ \norm{\nabla \Pi_{\rm iso} u}_{L^2(\Omega)} 
   \le C \norm{\pi_{\rm rot} \nabla u}_{L^2(\Omega)} $$ 
(see \eqref{eq:PirotD-DPiiso-equiv} below). 
In particular, on $H^1_0(\Omega)$ or $H^1_{\rm per}(\Omega)$ (in case $\Omega$ is a cuboid) one even has 
$$ \norm{\nabla u}_{L^2(\Omega)} 
   \le C \norm{\pi_{\rm rot} \nabla u}_{L^2(\Omega)}. $$  
The reverse estimates being trivial, an equivalent form is to say that the seminorms $\norm{\nabla \Pi_{\rm iso} \fdot}_{L^2(\Omega)}$ and $\norm{\pi_{\rm rot} \nabla \fdot}_{L^2(\Omega)}$, respectively, $\norm{\nabla \fdot}_{L^2(\Omega)}$ and $\norm{\pi_{\rm rot} \nabla \fdot}_{L^2(\Omega)}$ are equivalent. 

In fact, numerous generalizations of Korn's basic inequality have been established, in particular, over the last years, including settings in more general function spaces (Orlicz spaces, functions of bounded variation), estimates for incompatible fields (that cannot be written as a gradient), and nonlinear rigidity inequalities. For a comprehensive summary, we refer the reader to the recent paper \cite{LewintanMuellerNeff:21} and the references cited therein. 

In more direct connection with the subject of the present contribution are discretized versions of the continuum Korn inequality that, motivated by the analysis of numerical approximation schemes, have been obtained in various settings. 
By way of example we mention \cite{Brenner:04,MardalWinther:06,DiPietroNicaise:13,CarstensenSchedensack:15,LiMingShi:17,BottiDiPietroGuglielmana:19,LiMingWang:21}. 
More recently, discrete versions of the Korn inequality have been developed that apply to systems of interacting particles and provide rigidity estimates for crystals in terms of their configurational energy. Such estimates are at the basis of the stability analysis of lattice systems: 
If a configuration is a critical point of the configurational energy, \ie, the forces within the particle system are in balance, one is interested in criteria that guarantee that such a configuration is stable, see, \eg, \cite{E2007,Hudson2011,Ortner2013,Braun2016}. 
It bears emphasis that, in comparison to pure continuum models, such atomistic systems are considerably more delicate as not only continuum (and hence long wave length) perturbations but also possible disorder at the atomistic scale has to be taken into account. 
From a technical point of view this amounts to additional degrees of freedom in possibly high dimensional discrete gradients (cp.\ \cite{FrieseckeTheil:02,ContiDolzmannKirchheimMueller:06,Schmidt:06,Schmidt:09}) that need to be controlled in terms of energy estimates so that eventually (a suitable version of) a Cauchy-Born rule can be established.

There are two principal features that are at the core of a discrete Korn inequality for a lattice system (cf.\ \cite{Hudson2011,Braun2016}): 1.\ Periodicity: The periodic arrangement of particles allows for the application of Fourier transform methods to establish `phonon stability'; and 2.\ Exhaustion of the full space: In bulk systems there are no soft modes due to buckling type deformations.

The central aim of the present contribution is to investigate the validity of Korn type inequalities beyond the periodic setting and, to some extend, also beyond the bulk regime. It lies at the heart our endeavor to examine the stability behavior of such generalized structures, cf.\ \cite{SchmidtSteinbach:21a,SchmidtSteinbach:21c,Steinbach2021}. The main motivation for such an analysis are possible applications to {\em objective structures}. These particle systems, introduced by James in \cite{James2006}, constitute a far reaching generalization of lattice systems and have been successfully applied to a remarkable number of important structures, ranging from biology (to describe parts of viruses) to nanoscience (to model carbon nanotubes), see, \eg,  \cite{FalkJames06,DumitricaJames07,DayalJames10,FengPlucinskyJames19}. They are characterized by the fact that, up to rigid motions of the surrounding space, any two points ``see'' an identical environment of other points. (In a lattice this would be true even up to translations.) As a consequence, objective structures correspond to orbits of a single point under the action of a general discrete group of Euclidean isometries, cf.\ \cite{James2006,Juestel2014}. As the symmetry of these objects in general is considerably more complex than that of a lattice, the adaption of methods and results on lattices has only been achieved in a few cases so far. As notable examples we mention an algorithm for solving the Kohn-Sham equations for clusters \cite{BanerjeeElliottJames:15} and the X-ray analysis of helical structures set forth in \cite{FrieseckeJamesJuestel16}. 

Within an appropriate coordinate system for an objective structure, such a group might be assumed to embed into a subgroup of $\O(d_1) \oplus \mathcal{S}$ for a crystallographic spacegroup $\mathcal{S}$ acting on $\R^{d_2}$, where $d_1+d_2=d$, with surjective projection onto $\mathcal{S}$. In particular, for bulk structures with $d_2 = d$ the particles invade the whole space $\R^d$, whereas lower dimensional structures invade a tubular neighborhood of $\{0\} \times \R^{d_2}$. 

A major difficulty in obtaining Korn type inequalities then results from the general structure and the non-commutativity of these groups. Whereas in principle a Fourier transform is defined on their dual spaces, the consideration of periodic mappings with significant ``long wave-length'' contributions turns out non-trivial. Yet, uniform estimates on such quantities that are stable in the limit of infinitely large periodicity (corresponding to infinitely many particles, respectively, vanishing interparticle distances in a rescaled set-up) are essential for a discrete Korn inequality to hold. However, as objective structures need not be periodic, even the definition of quantities that can serve the role of a wave vector is not obvious. 

In \cite{SchmidtSteinbach:21a}, by exploiting the special structure of discrete subgroups of the group of Euclidean isometries on $\R^d$, we provided an efficient and extensive description of the dual space of a general discrete group of Euclidean isometries. In particular, we identified a finite union of convex `wave vector domains' reflecting the existence of an underlying part of translational type of finite index. This structure is indeed tailor-made for our investigations on Korn inequalities.  Due to the discrete nature of the underlying particle system, we consider finite difference stencils of (finite) interaction range and associate to them suitable seminorms measuring the (local) distances to the set of infinitesimal rigid motions and certain subsets thereof, respectively, in terms of $\ell^2$ norms of projections onto these sets. Our main results are then formulated in terms of such seminorms and state generic conditions for their equivalence, the main result being Theorem~\ref{Theorem:EquivalenceAll}. At the core of our proof lies the technical Lemma~\ref{Lemma:helpTheorem} in which we utilize a classical minimax theorem of Tur\'{a}n on generalized power sums in order to obtain control on a general skew symmetric matrix in terms of certain oscillatory perturbations.  

In more detail, for a given interaction range $\RR$ we consider the three seminorms $\norm \fdot_\RR$, $\norm \fdot_{\RR,0}$, $\norm \fdot_{\RR,0,0}$. Roughly speaking, $\norm \fdot_\RR$ measures the local distances from the set of all infinitesimal rigid motions, characterized by generic skew symmetric matrices $$S = {\footnotesize \parens[\bigg]{\begin{matrix}S_1&S_2\\-S_2^T&S_3\end{matrix}} } \in \Skew(d),$$ where $S_1\in\Skew(d_1),S_2\in\R^{d_1\times d_2},S_3\in\Skew(d_2)$. $\norm \fdot_{\RR,0}$ measures the local distances to those rigid motions that fix $\{0\} \times \R^{d_2}$ intrinsically, corresponding to $S \in\Skew(d)$ as above with $S_3 = 0$, and $\norm \fdot_{\RR,0,0}$ measures the local distances to those rigid motions that fix $\{0\} \times \R^{d_2}$ in $\R^{d}$, corresponding to $S \in\Skew(d)$ with $S_2=0$ and $S_3 = 0$. In particular, $\norm \fdot_\RR \le \norm \fdot_{\RR,0} \le \norm \fdot_{\RR,0,0}$. 

In Theorems~\ref{Theorem:equivalence} and~\ref{Theorem:NewEquivalenceAll} we observe that each of these seminorms does -- up to equivalence -- not depend on the particular choice of $\RR$ as long as $\RR$ is rich enough. Our main Theorem~\ref{Theorem:Korn} then states that indeed $\norm \fdot_\RR$ and $\norm \fdot_{\RR,0}$ are equivalent. In particular, for bulk structures with $d_1 = 0$ we thereby obtain a full Korn inequality for objective structures generated by a general space group. For $d_1\ge 1$ it can be interpreted as an `intrinsic rigidity' estimate within the extended dimensions of the structure. We summarize these findings in Theorem~\ref{Theorem:EquivalenceAll}. In Propositions~\ref{Proposition:ExOne} and~\ref{Proposition:ExTwo} we will also see that in general $\norm \fdot_{\RR,0}$ and $\norm \fdot_{\RR,0,0}$ are not equivalent. In view of possible buckling modes, this is in fact not to be expected and indeed long wave-length modulations of the extended dimensions within the surrounding space impede a strong Korn type inequality. 

In fact, in applications to the stability analysis of objective structures both seminorms $\norm \fdot_\RR$ (equivalently, $\norm \fdot_{\RR,0}$) and $\norm \fdot_{\RR,0,0}$ will be of relevance. There the question is addressed if an objective structure is a stable configuration when the particles at different sites are assumed to interact. Despite its importance, little appears to be known beyond bulk lattice systems. (See, \eg, \cite{Hudson2011,Braun2016} for lattice systems subject to very generic interaction potentials.) 
Indeed, stability estimates for homogeneous structures are not only of intrinsic value but may also serve as a fundamental step towards a quantitative description of the effect of a dislocation in such structures, cp.\ \cite{EhrlacherOrtnerShapeev:16,OlsonOrtner:17,OlsonOrtnerWangZhang:23,BraunHudsonOrtner:22}. 
In \cite{SchmidtSteinbach:21c} we provide a stability analysis in the general framework of objective structures and, in particular, establish characterizations of stability constants for objective structures in terms of the seminorms $\norm \fdot_{\RR}$ and $\norm \fdot_{\RR,0,0}$. Here $\norm \fdot_{\RR,0,0}$ applies to bulk systems and might also be used in lower dimensional tensile regimes in which pre-stresses have a stabilizing effect. The weaker seminorm $\norm \fdot_{\RR}$ appropriately describes lower dimensional systems in their ground state even at the onset of (buckling type) instabilities. Based on these results, we will be able to provide a numerical algorithm for determining the stability of a given structure. By way of example we will also show that indeed novel stability results for nanotubes can be obtained. 

\subsection*{Outline} 
In Section~\ref{section:structure} we discuss the kinematics of objective structures. We begin by collecting some fundamental results on discrete subgroups of the Euclidean group in Subsection~\ref{subsection:structure}
including a characterization up to conjugacy and basic notions of Fourier analysis on periodic mappings for such structures. In the following Subsection~\ref{subsection:orbit} we draw some conclusions on the geometry of objective structures which are orbits of a point under such discrete Euclidean groups. 

Section~\ref{section:seminorm} is the core section of our paper featuring our main Korn type Theorems~\ref{Theorem:Korn} and~\ref{Theorem:EquivalenceAll}. 
In Subsection~\ref{subsection:deformation-seminorms} we first define seminorms $\norm \fdot_\RR$ on deformations in terms of local finite differences with interaction range $\RR$. The following Subsection~\ref{subsection:local-equivalence} serves to prove that these seminorms are essentially independent of the particular choice of $\RR$. 
In Subsection~\ref{subsection:Korn} we then define the above-mentioned seminorms $\norm \fdot_{\RR,0}$. 
Having successfully established our main technical Lemma~\ref{Lemma:helpTheorem}, we prove our main Theorem~\ref{Theorem:Korn} stating that $\norm \fdot_\RR$ and $\norm \fdot_{\RR,0}$ are equivalent. In the last Subsection~\ref{subsection:seminorms-kernels} of the present section we explicitly describe the kernels of the previously defined seminorms. 

In Section~\ref{section:zero} we briefly discuss two naturally arising seminorms including the above-mentioned $\norm \fdot_{\RR,0,0}$ which turn out to be stronger than $\norm \fdot_\RR$ and $\norm \fdot_{\RR,0}$. 

The final Section~\ref{section:examples} discusses two basic examples which allow for amenable descriptions of the above studied seminorms, both in real and in Fourier space. They also serve as an explicit example showing that $\norm \fdot_{\RR,0,0}$ and $\norm \fdot_\RR$ are not equivalent. 

\subsection*{Notation} 
We denote by $e_i$ the $i^{\text{th}}$ standard coordinate vector in $\R^d$ and by $I_d\in\R^{d\times d}$ the identity matrix of size $d$ and by $\id$ the identity function $\R^d \to \R^d$, $x \mapsto x$. 
If $x\in\C^m$, $y\in\C^n$ we write $x\otimes y^T=xy^T=(x_iy_j)\in\C^{m\times n}$. 
$\C^{m\times n}$ is equipped with the usual Frobenius inner product $\angles{\fdot,\fdot}$ and induced norm $\norm\fdot$.
For a  group $\G$ and $\A, \A_1,\A_2\subset \G$, $g\in \G$  and $n\in\Z$ we denote by 
\[\A_1\A_2:=\set{a_1a_2}{a_1\in \A_1,a_2\in \A_2}\subset \G\quad\text{and}\quad 
g\A:=\set{ga}{a\in \A}\subset \G.
\]
the product of subsets, respectively, an element and a subset of a group, while we reserve 
\begin{align*}
\A^n:=\set{a^n}{a\in\A}\subset \G
\end{align*}
for the set of $n$-th powers of elements of $\A$. 
Finally, $\angles \A$ is the subgroup generated by $\A$. 

\subsection*{Acknowledgements}
This work was partially supported by project 285722765 of the Deutsche
Forschungsgemeinschaft (DFG, German Research Foundation).


\section{Objective structures}\label{section:structure}

Objective structures are orbits of a point under the action of a discrete subgroup of the Euclidean group. For an efficient description, in Subsection~\ref{subsection:structure} we first describe the structure of these groups in some detail. We then present a number of basic results on the Fourier analysis of such groups. In Subsection~\ref{subsection:orbit} we introduce the atomic reference configurations and study their geometry in the ambient space. 

\subsection{Discrete subgroups of the Euclidean group}\label{subsection:structure}

We collect some basic material on discrete subgroups of the Euclidean group acting on $\R^d$ from \cite{SchmidtSteinbach:21a}.
For proofs of the results in this subsection we refer to \cite{SchmidtSteinbach:21a}.

The \emph{Euclidean group} $\E(d)$ in dimension $d\in\N$ is the set of all Euclidean distance preserving transformations of $\R^d$ into itself, their elements are called \emph{Euclidean isometries}. It may be described as $\E(d)=\O(d)\ltimes \R^d$, the (outer semidirect) product of $\R^d$ and the orthogonal group $\O(d)$ in dimension $d$ with group operation given by
\[\iso{A_1}{b_1}\iso{A_2}{b_2}=\iso{A_1A_2}{b_1+A_1b_2}\]
for $\iso{A_1}{b_1},\iso{A_1}{b_2}\in\E(d)$. We set 
\begin{align*}
\rot\colon\E(d)\to\O(d), \quad 
&\iso Ab\mapsto A \qquad \text{and} \\  
\trans\colon\E(d)\to\R^d, \quad 
&\iso Ab\mapsto b
\end{align*}
and for $\iso Ab\in\E(d)$ we call $\rot(\iso Ab)$ the \emph{linear component} and $\trans(\iso Ab)$ the \emph{translation component} of $\iso Ab$ so that  
\[g=\iso{I_d}{\trans(g)}\iso{\rot(g)}0\]
for each $g\in\E(d)$. 
An Euclidean isometry $\iso Ab$ is called a \emph{translation} if $A=I_d$. The set $\Trans(d):=\{I_d\}\ltimes\R^d$ of translations forms an abelian subgroup of $\E(d)$. 
$\E(d)$ acts on $\R^d$ via 
\[\iso Ab\gdot x := Ax+b\qquad\text{for all }\iso Ab\in \E(d)\text{ and }x\in\R^d.\]
%
For a group $\G<\E(d)$ the \emph{orbit} of a point $x\in\R^d$ under the action of the group is 
\[\G\gdot x:=\set{g\gdot x}{g\in\G}\]
and the \emph{stabilizer subgroup} of $\G$ with respect to $x\in\R^d$ is
\[\G_x:=\set{g\in\G}{g\gdot x=x}.\]
In the following we will consider \emph{discrete subgroups} of the Euclidean group, which are those $\G<\E(d)$ for which every orbit $\G\gdot x$, $x\in\R^d$, is discrete.

Particular examples of discrete subgroups of $\E(d)$ are the so-called \emph{space groups}. These are those discrete groups $\G < \E(d)$ that contain $d$ translations whose translation components form a basis of $\R^d$. Their subgroup of translations is generated by $d$ such linearly independent translations and forms a normal subgroup of $\G$ which is isomorphic to $\Z^d$. 

In general, discrete subgroups of $\E(d)$ can be characterized as follows. (Also cp.\ \cite[A.4 Theorem 2]{Brown1978}.) Recall that two subgroups $\G_1,\G_2<\E(d)$ are \emph{conjugate} in $\E(d)$ if there exists some $g\in\E(d)$ such that $g^{-1}\G_1 g=\G_2$. (This corresponds to a rigid coordinate transformation in $\R^d$.) 
\begin{Theorem}\label{Theorem:Browndecomposablediscretegroup}
Let $\G<\E(d)$ be discrete, $d\in\N$. There exist $d_1,d_2\in\N_0$ such that $d=d_1+d_2$, a $d_2$-dimensional space group $\SG$ and a discrete group $\G'<\gplus{\O(d_1)}{\SG}$ such that $\G$ is conjugate under $\E(d)$ to $\G'$ and $\pi(\G')=\SG$, where $\pi$ is the natural epimorphism $\gplus{\O(d_1)}{\E(d_2)}\to\E(d_2)$, $\gplus Ag\mapsto g$.
\end{Theorem}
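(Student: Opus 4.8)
I would obtain the assertion by performing two successive rigid coordinate changes (conjugations in $\E(d)$): first a rotation aligning the coordinate axes with a canonical $\rot(\G)$-invariant orthogonal splitting $\R^d=\R^{d_1}\times\R^{d_2}$, and then a translation of the origin removing the residual translational part of $\G$ in the first group of coordinates. For a subspace $U\le\R^d$ write $\Pi_U$ for the orthogonal projection of $\R^d$ onto $U$; if $U$ is $\rot(\G)$-invariant, the map $q_U\colon\G\to\E(U)$ sending $g$ to the isometry $u\mapsto\Pi_U(g\gdot u)$ of $U$ is a group homomorphism, with linear component $\rot(g)|_U$ and translation component $\Pi_U\trans(g)$. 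One should note that the naive guess --- taking $\R^{d_2}$ to be the span of the translation components of $\G\cap\Trans(d)$ --- is in general too small, since a discrete group can ``asymptotically translate'' in a direction in which it contains no genuine translation; the splitting therefore has to be singled out differently.

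\emph{Stage 1: the invariant splitting.} I would take $\R^{d_2}$ to be the \emph{smallest} $\rot(\G)$-invariant subspace $U\le\R^d$ for which $q_{U^\perp}(\G)$ is a bounded subset of $\E(U^\perp)$, and set $\R^{d_1}:=(\R^{d_2})^\perp$. Such a smallest subspace exists: the family of admissible $U$ is nonempty ($U=\R^d$ works, as $q_{\{0\}}(\G)$ is trivial) and closed under intersections. For the latter, note that for any two fixed subspaces $V_1,V_2$ there is a constant $C=C(V_1,V_2)$ with $\norm{\Pi_{V_1+V_2}w}\le C\bigl(\norm{\Pi_{V_1}w}+\norm{\Pi_{V_2}w}\bigr)$ for all $w\in\R^d$ (all three quantities vanish exactly on $(V_1+V_2)^\perp$, so the relevant ratio is a continuous positive function on the unit sphere of $V_1+V_2$); hence boundedness of $q_{U_1^\perp}(\G)$ and of $q_{U_2^\perp}(\G)$ forces boundedness of $q_{(U_1\cap U_2)^\perp}(\G)$, because $(U_1\cap U_2)^\perp=U_1^\perp+U_2^\perp$. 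After conjugating $\G$ by a suitable element of $\O(d)\le\E(d)$ we may assume $\R^{d_1}=\R^{d_1}\times\{0\}$ and $\R^{d_2}=\{0\}\times\R^{d_2}$; then each $g\in\G$ reads $g=\iso{A_1\oplus A_2}{(b_1,b_2)}$ with $A_i\in\O(d_i)$, $b_i\in\R^{d_i}$, and, writing $q:=q_{\R^{d_1}}$ and $\pi:=q_{\R^{d_2}}$, we have $q(g)=\iso{A_1}{b_1}$ and $\pi(g)=\iso{A_2}{b_2}$.

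\emph{Stage 2: consequences, and normalising the origin.} By the choice of $\R^{d_2}$ the image $q(\G)$ is bounded, so its closure is a compact subgroup of $\E(d_1)$ and hence fixes a point $x_0\in\R^{d_1}$ (for instance, the centre of the smallest ball enclosing any orbit is fixed). Conjugating $\G$ by the translation $\iso{I_d}{(-x_0,0)}$ sends each $g=\iso{A_1\oplus A_2}{(b_1,b_2)}$ to an element whose first translation component equals $A_1x_0+b_1-x_0=0$, so the conjugated group $\G'$ is contained in $\O(d_1)\oplus\E(d_2)$; since $\Pi_{\R^{d_2}}(-x_0,0)=0$, the map $\pi$ is unaffected and $\SG:=\pi(\G')=\pi(\G)$, whence $\G'<\O(d_1)\oplus\SG$ with $\pi(\G')=\SG$, $\G'$ discrete (being conjugate to $\G$), and $d_1+d_2=d$. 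There remains the claim that $\SG=\pi(\G)$ is a $d_2$-dimensional space group. That $\pi(\G)$ is discrete follows from boundedness of $q(\G)$ together with discreteness (hence closedness) of $\G$: if $\pi(g_n)\to e$, then along a subsequence on which $q(g_n)$ converges we get $g_n\to g_\infty\in\G$ with $\pi(g_\infty)=e$, and discreteness of $\G$ forces $\pi(g_n)=e$ for $n$ large.

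\emph{Main obstacle.} The one genuinely delicate point is that $\pi(\G)$ contains $d_2$ linearly independent translations. The natural approach is by contradiction using minimality of $\R^{d_2}$: if the translations of $\pi(\G)$ spanned only a proper subspace $W'\subsetneq\R^{d_2}$ --- which is automatically $\rot(\G)$-invariant, as the translation subgroup of $\pi(\G)$ is normal in $\pi(\G)$, and so is its orthogonal complement $W''$ in $\R^{d_2}$ --- one would like to conclude that $q_{(W')^\perp}(\G)=q_{\R^{d_1}\oplus W''}(\G)$ is again bounded, so that $W'$ is an admissible subspace strictly smaller than $\R^{d_2}$, a contradiction. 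The obstruction to this is precisely the possible presence of unbounded screw-type motions of $\pi(\G)$ transverse to $W'$, and ruling it out --- thereby completing the proof --- is exactly the classical structure theory of discrete subgroups of $\E(d)$ (the Bieberbach--Zassenhaus circle of ideas; cf.\ \cite{SchmidtSteinbach:21a} and the standard literature on crystallographic groups). The remaining steps are routine manipulations with the semidirect-product law of $\E(d)$.
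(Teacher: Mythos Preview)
The paper does not actually prove this theorem: it is stated as background material ``from \cite{SchmidtSteinbach:21a}'' and quoted without argument. So there is no proof in the present paper to compare your proposal against; the authors simply import the result from their companion paper.

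As for your sketch on its own merits: the overall strategy is sound. Defining $\R^{d_2}$ as the \emph{minimal} $\rot(\G)$-invariant subspace whose orthogonal complement carries a bounded projection of $\G$ is a clean way to isolate the ``space-group part'', and your verification that this family is closed under finite intersections (hence, by finite-dimensionality, has a least element) is correct. The fixed-point argument for the bounded image $q(\G)$ and the subsequent translation of the origin are standard and correctly executed, as is the discreteness argument for $\pi(\G)$ via compactness of $q(\G')$ and closedness of $\G'$.

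You are also right that the only genuinely nontrivial step is showing that $\pi(\G)$ is a \emph{full} $d_2$-dimensional space group, and that your minimality argument alone does not close this: if the translations of $\pi(\G)$ spanned only $W'\subsetneq\R^{d_2}$, you would need $q_{W''}(\pi(\G))$ bounded, and discrete subgroups of $\E(d_2)$ can certainly contain unbounded screw motions transverse to the span of their pure translations. Ruling this out is exactly a Bieberbach-type statement (every discrete subgroup of $\E(n)$ contains a maximal free-abelian normal subgroup of finite index whose rank equals the ``essential'' dimension), and deferring to that theory---as the paper itself does by citing \cite{SchmidtSteinbach:21a}---is appropriate. Your write-up would be improved by stating precisely which classical statement you invoke (e.g., that a discrete cocompact-free subgroup of $\E(n)$ with trivial translation subgroup is finite, or the first Bieberbach theorem in the formulation that the translation subgroup has finite index once it spans), rather than gesturing at ``the Bieberbach--Zassenhaus circle of ideas''.
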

Here $\gplus{}{}$ is the group homomorphism
\begin{align*}
\gplus{}{}\colon&\O(d_1)\times\E(d_2)\to\E(d_1+d_2)\\
&(A_1,\iso{A_2}{b_2})\mapsto \gplus{A_1}{\iso{A_2}{b_2}}:=\iso[\bigg]{\parens[\bigg]{\begin{matrix}A_1&0\\0&A_2\end{matrix}}}{\parens[\bigg]{\begin{matrix}0\\b_2\end{matrix}}} 
\end{align*}
and $\gplus{\O(d_1)}{\SG}$ is understood to be $\O(d)$ if $d_1=d$ and to be $\SG$ if $d_1=0$. The theorem allows us to assume that $\G$ from now on is of the form $\G'$ with no loss of generality.  

Such a discrete group $\G<\E(d)$ can be efficiently described in terms of the range $\SG$, the kernel $\F$ of $\pi|_\G$ and a section $\T \subset \G$ of the translation group $\T_\SG$ of $\SG$, \ie, a set $\T\subset\G$ such that the map $\T\to\T_\SG$, $g\mapsto \pi(g)$ is bijective. We remark that the quantities $d$, $d_1$, $d_2$, $\F$, $\SG$ and $\T_\SG$ are uniquely defined by $\G$. However, in general there is no canonical choice for $\T$, it might not a be group and the elements of $\T$ might not commute. Yet, a main result of \cite{SchmidtSteinbach:21a} states that there is an $m_0\in\N$ such that $\T^N = \set{t^N}{t\in\T}$ is a normal subgroup of $\G$ if and only if $N$ is a multiple of $m_0$: 
\[ \T^N \triangleleft \G \iff N \in \MM := m_0 \N. \] 
For each $N \in \MM$, $\T^N$ is isomorphic to $\Z^{d_2}$ and of finite index in $\G$. In this sense, $\mathcal{G}$ is a finite extension of the lattice $\mathcal{T}^{m_0} \cong \Z^{d_2}$. 

This observation allows us to introduce a notion of periodicity for functions defined on $\G$ as those functions which are invariant under $\T^N$ for some multiple $N$ of $m_0$. More precisely, for a set $S$ and $N\in \MM$ we say that a function $u\colon \G \to S$ is \emph{$\T^N$-periodic} if
\[u(g) = u(gt)\qquad\text{for all }g\in\G\text{ and }t\in\T^N.\]
It is called \emph{periodic} if there exists some $N\in \MM$ such that $u$ is $\T^N$-periodic. We also set
\[\Per(\G,\C^{m\times n}):=\set{u\colon\G\to\C^{m\times n}}{u\text{ is periodic}}.\]
(Recall that $\C^{m\times n}$ is equipped with the usual Frobenius inner product and induced norm.) We notice that the above definition of periodicity is independent of the choice of $\T$ and that $\Per(\G,\C^{m\times n})$ is a vector space. In fact, one has 
\[\Per(\G,\C^{m\times n}) = \set[\Big]{\G\to\C^{m\times n},g\mapsto u(g\T^N)}{N\in \MM, u\colon\G/\T^N\to\C^{m\times n}}.\]
For each $N\in \MM$ we now fix a representation set $\CC_N$ of $\G/\T^N$ and we equip $\Per(\G,\C^{m\times n})$ with the inner product $\angles{\fdot,\fdot}$ given by
\[\angles{u,v}:=\frac1{\abs{\CC_N}}\sum_{g\in\CC_N}\angles{u(g),v(g)}\qquad\text{if $u$ and $v$ are $\T^N$-periodic}\]
for all $u,v\in\Per(\G,\C^{m\times n})$.
The induced norm is denoted by $\norm\fdot_2$.

We denote by $\dual{\T^{m_0}}$ the \emph{dual space} of the abelian group $\T^{m_0}$, which consists of all homomorphisms from $\T^{m_0}$ to the complex unit circle. 
Observe that a homomorphism $\chi\in\dual{\T^{m_0}}$ is $\T^N$-periodic, $N\in \MM$, if and only if $\chi|_{\T^N}=1$. Let $\EE$ be the set $\set{\chi\in\dual{\T^{m_0}}}{\chi\text{ is periodic}}$. 
\begin{Remark}\label{rmk:dualspacelattice}
Suppose $\T^{m_0} \cong \Z^{d_2}$ is generated by $\{t_1,\ldots,t_{d_2}\}$. Then we have $\dual{\T^{m_0}}=\set{\chi_k}{k\in[0,1)^{d_2}}$, where $\chi_k\colon\T^{m_0}\to\C$ is given by 
\[\chi_k(t_1^{n_1}\cdots t_{d_2}^{n_{d_2}})=\euler^{2\pi\iu \angles{n,k}}\] 
for all $n\in\Z^{d_2}$. (Here $k_j$ is determined by the condition $\chi(t_j)=\euler^{2\pi\iu k_j}$, $j=1,\ldots,d_2$.) Such $\chi_k$ is periodic if and only if $k \in \Q^{d_2}$, whence $\EE = \set{\chi_k}{k\in[0,1)^{d_2} \cap \Q^{d_2}}$. 
\end{Remark}
Note that $\T^{m_0}\cap\CC_N$ is a representation set of $\T^{m_0}/\T^N$ for all $N\in\MM$. We define the Fourier transform as follows. 
\begin{Definition}\label{Definition:FourierPeriodic}
If $u\in\Per(\T^{m_0},\C^{m\times n})$ and $\chi\in\EE$, we set
\[\fourier u(\chi):=\frac1{\abs{\T^{m_0}\cap\CC_N}}\sum_{g\in\T^{m_0}\cap\CC_N}\chi(g)u(g)\in\C^{m\times n},\]
where $N\in\MM$ is such that $u$ and $\chi$ are $\T^N$-periodic.
\end{Definition}
\begin{Proposition}[The Plancherel formula]\label{Proposition:TFplancherelmatrix} 
The Fourier transformation
\[\fourier\fdot \colon \Per(\T^{m_0}, \C^{m\times n}) \to\bigoplus_{\chi\in\EE} \C^{m\times n}, \quad u \mapsto (\fourier u(\chi))_{\chi\in\EE}\]
is well-defined and bijective. Moreover, the Plancherel formula
\[\angles{u,v}=\sum_{\chi\in\EE}\angles{\fourier u(\chi),\fourier v(\chi)}\qquad\text{for all }u,v\in\Per(\T^{m_0},\C^{m\times n})\]
holds true. 
\end{Proposition}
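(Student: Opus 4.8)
The plan is to reduce the statement to classical Fourier analysis on the finite abelian groups $\T^{m_0}/\T^N$, $N\in\MM$, and then to pass to the direct limit over $N$. Since the transform of Definition~\ref{Definition:FourierPeriodic} acts entrywise on matrix-valued functions, I may and will assume $m=n=1$ throughout; the general case follows by applying the scalar result to each entry. Recall from the material preceding the statement that for $N\in\MM$ the set $\T^N$ is a subgroup of $\G$, of finite index, isomorphic to $\Z^{d_2}$, and that $\T^{m_0}\cap\CC_N$ is a transversal of $\T^{m_0}/\T^N$; write $q_N:=\abs{\T^{m_0}\cap\CC_N}=[\T^{m_0}:\T^N]$.

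\emph{A fixed period.} First I would fix $N\in\MM$ and study the subspace $H_N\subset\Per(\T^{m_0},\C)$ of $\T^N$-periodic functions. Such a function factors through $\G_N:=\T^{m_0}/\T^N$, so $\dim_\C H_N=q_N$ and the inner product on $H_N$ is $\angles{u,v}=\frac1{q_N}\sum_{\bar g\in\G_N}u(\bar g)\overline{v(\bar g)}$, which in particular is independent of the transversal since the summand depends only on the coset. A character $\chi\in\EE$ is $\T^N$-periodic precisely when $\chi|_{\T^N}=1$, i.e.\ when it descends to a character of $\G_N$; as $\G_N$ is finite abelian there are exactly $q_N$ of these, and the orthogonality relations for $\G_N$ say that they form an orthonormal basis of $H_N$. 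Because characters are unitary, $\chi\mapsto\overline\chi$ permutes the $\T^N$-periodic characters, and unwinding Definition~\ref{Definition:FourierPeriodic} gives $\fourier u(\chi)=\angles{u,\overline\chi}$ for each such $\chi$, whereas $\fourier u(\chi)=0$ if $\chi\in\EE$ fails to be $\T^N$-periodic (then $\overline\chi\perp H_N$). Hence on $H_N$ the map $u\mapsto(\fourier u(\chi))_{\chi\in\EE}$ is, up to the relabelling $\chi\leftrightarrow\overline\chi$, simply the coordinate map with respect to this orthonormal basis; it is therefore a linear bijection of $H_N$ onto $\bigoplus_{\chi\,\T^N\text{-periodic}}\C$, and Parseval's identity yields $\angles{u,v}=\sum_{\chi\in\EE}\angles{\fourier u(\chi),\fourier v(\chi)}$ for all $u,v\in H_N$.

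\emph{Compatibility in $N$ — the main point.} The step I expect to require genuine care is showing that the objects in the statement do not depend on which admissible period is used, so that the bijections above glue. I would first note that $\T^{N'}\subseteq\T^N$ whenever $N\mid N'$ (with $N,N'\in\MM$): for $t\in\T$ one has $t^{N'}=(t^N)^{N'/N}\in\T^N$, since $\T^N$ is a group containing $t^N$. Consequently $H_N\subseteq H_{N'}$ and every $\T^N$-periodic character is $\T^{N'}$-periodic. Now for $u,v\in H_N$ the value $u(g)\overline{v(g)}$ depends only on $g\T^N$, and each coset of $\T^N$ is represented exactly $[\T^N:\T^{N'}]=q_{N'}/q_N$ times in a transversal of $\T^{m_0}/\T^{N'}$; hence the normalised sum defining $\angles{u,v}$ over a transversal of $\T^{m_0}/\T^{N'}$ equals the one over a transversal of $\T^{m_0}/\T^N$, and the same count gives $\fourier u(\chi)$ the same value whether computed with $N$ or with $N'$. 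For two arbitrary admissible periods one compares each with their least common multiple, which again lies in $\MM$. This proves the well-definedness asserted in the statement.

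\emph{Passing to the limit.} Finally I would assemble the pieces. Every periodic function and every periodic character admits a period in $\MM$, and any two such periods are dominated in $\MM$ by their least common multiple; hence $\Per(\T^{m_0},\C)=\bigcup_{N\in\MM}H_N$ is a directed union, and likewise $\bigoplus_{\chi\in\EE}\C=\bigcup_{N\in\MM}\bigoplus_{\chi\,\T^N\text{-periodic}}\C$, the latter because an element of the algebraic direct sum has finite support and so all its nonzero indices share a common period in $\MM$. By the compatibility just established, the bijections from the first step glue to a single well-defined bijection $\fourier\fdot\colon\Per(\T^{m_0},\C)\to\bigoplus_{\chi\in\EE}\C$; injectivity and surjectivity are inherited from any $H_N$ large enough to contain the data at hand. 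For given $u,v$ the Plancherel formula then follows by choosing $N\in\MM$ with $u,v\in H_N$ and applying the identity from the first step, the extra terms over characters that are not $\T^N$-periodic vanishing. The only genuinely structure-specific input is the nesting $\T^{N'}\subseteq\T^N$ for $N\mid N'$ together with the index identity $q_{N'}=q_N[\T^N:\T^{N'}]$ — exactly what reconciles the differing normalisations $1/q_N$; everything else is the classical Fourier inversion formula for the finite abelian groups $\T^{m_0}/\T^N$.
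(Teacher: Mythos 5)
Your argument is correct, and it is the natural one: the paper itself states this proposition without proof (it is imported from the companion work \cite{SchmidtSteinbach:21a}), so there is no in-text proof to compare against. Your reduction to character orthogonality on the finite abelian quotients $\T^{m_0}/\T^N$, together with the two compatibility checks that actually need verifying here — the nesting $\T^{N'}\subset\T^{N}$ for $N\mid N'$ in $\MM$ and the index identity reconciling the normalisations $1/\abs{\T^{m_0}\cap\CC_N}$ — covers exactly the points on which well-definedness and the passage to the directed union rest, and the identification $\fourier u(\chi)=\angles{u,\conj{\chi}}$ with the vanishing of $\fourier u(\chi)$ for $\chi$ not $\T^N$-periodic correctly yields bijectivity and the Plancherel identity.
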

We remark that for all $u\colon\T^{m_0}\to\C^{m\times n}$ and $N\in \MM$ such that $u$ is $\T^N$-periodic, one gets 
\[\set{\chi \in \EE}{\fourier u(\chi) \neq 0} \subset \set{\chi\in\EE}{\chi\text{ is $\T^N$-periodic}}.\]
The following lemma provides the Fourier transform of a translated function.
\begin{Lemma}\label{Lemma:PeriodicTranslation}
Let $f \in \Per(\T^{m_0},\C^{m\times n})$, $g\in\G$ and $\tau_g f$ denote the translated function $f(\fdot g)$.
Then we have $\tau_g f\in\Per(\T^{m_0},\C^{m\times n})$ and
\[\fourier{\tau_g f}(\chi)=\chi(g^{-1})\fourier f(\chi)\]
for all $\chi\in\EE$.
\end{Lemma}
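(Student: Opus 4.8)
The statement is the familiar ``translation becomes modulation'' identity for the Fourier transform of Definition~\ref{Definition:FourierPeriodic}, so the plan is (i) to check that $\tau_g f$ again lies in $\Per(\T^{m_0},\C^{m\times n})$, and (ii) to evaluate $\fourier{\tau_g f}(\chi)$ directly from the definition by a change of summation variable.

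For (i), fix $N\in\MM$ with $f$ being $\T^N$-periodic. Since $N\in\MM$, the subgroup $\T^N$ is normal in $\G$, whence $g^{-1}tg\in\T^N$ for every $t\in\T^N$; therefore
\[ (\tau_g f)(ht) = f(htg) = f\bigl((hg)(g^{-1}tg)\bigr) = f(hg) = (\tau_g f)(h), \]
so $\tau_g f$ is again $\T^N$-periodic and in particular lies in $\Per(\T^{m_0},\C^{m\times n})$. Consequently, given $\chi\in\EE$, we may pick a single $N\in\MM$ for which $f$, $\tau_g f$ and $\chi$ are simultaneously $\T^N$-periodic and use it in Definition~\ref{Definition:FourierPeriodic} for both $\fourier f(\chi)$ and $\fourier{\tau_g f}(\chi)$.

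For (ii), Definition~\ref{Definition:FourierPeriodic} gives
\[ \fourier{\tau_g f}(\chi) = \frac1{\abs{\T^{m_0}\cap\CC_N}}\sum_{h\in\T^{m_0}\cap\CC_N}\chi(h)\,f(hg), \]
and I would reindex through the bijection $h\mapsto hg$. Two observations make this legitimate. First, $\chi$ is a homomorphism, so $\chi(h)=\chi(g^{-1})\,\chi(hg)$ and the constant factor $\chi(g^{-1})$ can be pulled out of the sum. Second --- using once more that $\T^N$ is normal in $\G$ --- as $h$ runs over the transversal $\T^{m_0}\cap\CC_N$ of $\T^N$ in $\T^{m_0}$, the shifted elements $hg$ form a transversal of $\T^N$ in the coset $\T^{m_0}g$, and since the summand $\chi(\fdot)\,f(\fdot)$ is $\T^N$-periodic its total over that transversal coincides with its total over $\T^{m_0}\cap\CC_N$. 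Combining the two yields $\fourier{\tau_g f}(\chi)=\chi(g^{-1})\fourier f(\chi)$; independence of the auxiliary choices of $N$ and of the representative sets $\CC_N$ is part of the well-definedness statement of Proposition~\ref{Proposition:TFplancherelmatrix}.

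The lemma is elementary and I do not expect a genuine obstacle. The only point that warrants care is the bookkeeping in step (ii): keeping track of exactly which transversal of $\T^N$ one sums over after the shift, and interpreting the character $\chi$ consistently on the group elements appearing there --- both of which rest on the normality properties of $\T^N$ and $\T^{m_0}$ in $\G$ recalled in Section~\ref{section:structure}.
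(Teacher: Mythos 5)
Your argument is correct and is the natural one: normality of $\T^N$ for $N\in\MM$ gives the $\T^N$-periodicity of $\tau_g f$, and the modulation identity follows by writing $\chi(h)=\chi(g^{-1})\chi(hg)$ and observing that the $\T^N$-periodic summand $\chi(\fdot)f(\fdot)$ has the same total over the shifted transversal $(\T^{m_0}\cap\CC_N)\+g$ as over $\T^{m_0}\cap\CC_N$. The paper states this lemma without proof (it is background recalled from \cite{SchmidtSteinbach:21a}), so there is no in-paper argument to compare with; the one caveat --- that $f(hg)$ and $\chi(g^{-1})$ are only literally defined when $g\in\T^{m_0}$, which is indeed the only case invoked later in the proof of Theorem~\ref{Theorem:Korn} --- is an imprecision of the statement itself rather than of your proof.
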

%

\subsection{Orbits of discrete subgroups of the Euclidean group}\label{subsection:orbit}
%
%
As a far reaching generalization of a lattice, James \cite{James2006} defines an \emph{objective (atomic) structure} as a discrete point set $S$ in $\R^d$ such that for any $x_1, x_2 \in S$ there is an Euclidean isometry $g \in \E(d)$ with $g\cdot S = S$ and $g\cdot x_1 = x_2$. Equivalently, $S$ is an orbit of a point under the action of a discrete subgroup of $\E(d)$, see, \eg, \cite[Proposition 3.14]{Juestel2014}:
\begin{Definition}%
A subset $S$ of $\R^d$ is called an \emph{objective structure} if there exist a discrete group $\G<\E(d)$ and a point $x\in\R^d$ such that $S = \G\cdot x$.%
\end{Definition}
For a wealth of examples, we refer to the original contribution \cite{James2006}. Here we limit ourselves to two simple concrete examples that will serve to illustrate the results to be discussed below. 
\begin{Example}\label{ex:elementary-chains-i}
Elementary illustrative examples are given by atomic chains such as 
\begin{enumerate}
\item\label{ex:item-straight} $\G_1=\angles{t_1}<\E(2)$, where $t_1=\iso{I_2}{e_2}\in\E(2)$, and with $x_{1,0}=0\in\R^2$,  

\item $\G_2=\angles{t_2}<\E(2)$, where $t_2=\iso{\parens{\begin{smallmatrix}-1&0\\0&1\end{smallmatrix}}}{e_2}\in\E(2)$, and with $x_{2,0}=e_1\in\R^2$, 
\end{enumerate}
cf.\ Fig.~\ref{fig:atomic-chains}. Here we have $d_1=d_2=1$ in both cases.
\begin{figure}[h!]
\begin{center}
\begin{tikzpicture}[scale=0.5]
 \draw[gray,dotted] (-11.5,0) -- (-0.5,0);
 \draw[gray,fill=gray] (-11,0) circle (.1cm);
 \draw[gray,fill=gray] (-10,0) circle (.1cm);
 \draw[gray,fill=gray] (-9,0) circle (.1cm);
 \draw[gray,fill=gray] (-8,0) circle (.1cm);
 \draw[gray,fill=gray] (-7,0) circle (.1cm);
 \draw[gray,fill=gray] (-6,0) circle (.1cm);
 \draw[gray,fill=gray] (-5,0) circle (.1cm);
 \draw[gray,fill=gray] (-4,0) circle (.1cm);
 \draw[gray,fill=gray] (-3,0) circle (.1cm);
 \draw[gray,fill=gray] (-2,0) circle (.1cm);
 \draw[gray,fill=gray] (-1,0) circle (.1cm);
 \draw[gray,fill=gray] (3,1) circle (.1cm);
 \draw[gray,fill=gray] (4,-1) circle (.1cm);
 \draw[gray,fill=gray] (5,1) circle (.1cm);
 \draw[gray,fill=gray] (6,-1) circle (.1cm);
 \draw[gray,fill=gray] (7,1) circle (.1cm);
 \draw[gray,fill=gray] (8,-1) circle (.1cm);
 \draw[gray,fill=gray] (9,1) circle (.1cm);
 \draw[gray,fill=gray] (10,-1) circle (.1cm);
 \draw[gray,fill=gray] (11,1) circle (.1cm);
 \draw[gray,fill=gray] (12,-1) circle (.1cm);
 \draw[gray,fill=gray] (13,1) circle (.1cm);
 \draw[gray,dotted] (2.75,0.5) -- (3,1) -- (4,-1) -- (5,1) -- (6,-1) -- (7,1) -- (8,-1) -- (9,1) -- (10,-1) -- (11,1) -- (12,-1) -- (13,1) -- (13.25,0.25) ; 
 \draw[thick,->] (16,0) -- (16,-1.5); 
 \node[left] at (16,-1.5) {$x_1$};
 \draw[thick,->] (16,0) -- (17.5,0); 
 \node[below] at (17.5,0) {$x_2$};
\end{tikzpicture}
\end{center} 
\caption{$\G_1\cdot x_{1,0}$ (left) and $\G_2\cdot x_{2,0}$ (right)}\label{fig:atomic-chains}
\end{figure}
\end{Example}
We proceed with a couple of lemmas implying that without loss of generality objective structures lie in $\{0_{d-\daff}\}\times\R^\daff$ where $\daff$ is their affine dimension and, moreover, the associated discrete group of isometries acts trivially on $\R^{d-\daff}\times\{0_{\daff}\}$. 
\begin{Lemma}
Let $S\subset\R^d$ be an objective structure.
Then for every $a\in\E(d)$ the set $\set{a\gdot x}{x\in S}$ is also an objective structure.
\end{Lemma}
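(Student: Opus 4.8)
The plan is to conjugate the entire structure by $a$. Since $S$ is a general configuration, fix a discrete group $\G<\E(d)$ and a point $x\in\R^d$ such that the map $\G\to S$, $g\mapsto g\gdot x$ is bijective. Set $aS:=\set{a\gdot x}{x\in S}$. I claim that the discrete group $\G':=a\G a^{-1}$ together with the point $x':=a\gdot x$ witnesses that $aS$ is a general configuration, so the whole argument amounts to verifying the two defining properties for this choice.

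First I would check that $\G'$ is a discrete subgroup of $\E(d)$. Conjugation $g\mapsto aga^{-1}$ is a group automorphism of $\E(d)$, hence $\G'<\E(d)$ is a subgroup. For discreteness, note that for any $y\in\R^d$ one has $\G'\gdot y=(a\G a^{-1})\gdot y=a\gdot\bigl(\G\gdot(a^{-1}\gdot y)\bigr)$; since $\G\gdot(a^{-1}\gdot y)$ is discrete and $a$ acts on $\R^d$ as a homeomorphism, $\G'\gdot y$ is discrete. Thus $\G'$ is a discrete subgroup of $\E(d)$.

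Next I would verify that the orbit map $\G'\to aS$, $g'\mapsto g'\gdot x'$, is bijective. Writing a general element of $\G'$ as $g'=aga^{-1}$ with $g\in\G$, one computes $g'\gdot x'=(aga^{-1})\gdot(a\gdot x)=a\gdot(g\gdot x)$. Hence this map equals the composition of the bijection $\G'\to\G$, $g'\mapsto a^{-1}g'a$, the bijection $\G\to S$, $g\mapsto g\gdot x$ (the hypothesis), and the bijection $S\to aS$, $y\mapsto a\gdot y$ (with inverse $z\mapsto a^{-1}\gdot z$). A composition of bijections is a bijection, so $aS$ is a general configuration with representation $(\G',x')$.

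There is essentially no serious obstacle here: the statement reflects the fact that a rigid coordinate change acts by conjugation on $\E(d)$ and as a homeomorphism on $\R^d$. The only point meriting a word of care is the discreteness of the conjugate group $\G'$, which is settled by the orbit computation in the second step (and which simultaneously re-derives the standard fact that conjugacy in $\E(d)$ preserves discreteness).
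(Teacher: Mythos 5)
Your proposal is correct and follows exactly the same route as the paper: conjugating the group to $a\G a^{-1}$ and taking the base point $a\gdot x$, then noting that the orbit map factors as a composition of bijections. The only difference is that you spell out the discreteness of the conjugate group, which the paper leaves implicit.
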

\begin{proof}
This follows directly from the observation that, if for a subgroup $\G<\E(d)$ and $x_0\in\R^d$ the map $\G\to S$, $g\mapsto g\gdot x_0$ is surjective, then, for every $a\in\E(d)$ the map $a\G a^{-1}\to\set{a\gdot x}{x\in S}$, $g\mapsto g\gdot(a\gdot x_0)$ is surjective.
\end{proof}
We denote the \emph{affine hull} of a set $A\subset\R^d$ by $\aff(A)$ and write $\dim(A):=\dim(\aff (A))$ for its \emph{affine dimension}. Recall that this is the dimension of the vector space $\spano(\set{x-x_0}{x\in A})$ for any $x_0\in A$.   
\begin{Lemma}\label{Lemma:configsubsetNEW}
Let $\G<\E(d)$ be discrete and $x_0\in\R^d$.
Let $\daff=\dim(\G\cdot x_0)$.
Then there exists some $a\in\E(d)$ such that for the discrete group $\G'=a\G a^{-1}$ and $x_0'=a\cdot x_0$ it holds
\[\aff(\G'\cdot x_0')=\{0_{d-\daff}\}\times\R^\daff\]
and $\G'\cdot x_0'=a\cdot(\G\cdot x_0)$.
\end{Lemma}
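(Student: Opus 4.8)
The plan is to reduce the statement to a standard fact about affine subspaces and their isometric alignment with a coordinate subspace, and then verify that conjugation by a carefully chosen rigid motion does the job. First I would set $L := \aff(\G \cdot x_0)$, which by hypothesis is an affine subspace of $\R^d$ of dimension $\daff$. Since $L$ is a $\daff$-dimensional affine subspace, there is an affine isometry $a \in \E(d)$ mapping $L$ onto $\{0_{d-\daff}\} \times \R^{\daff}$; concretely, write $a = \iso{A}{b}$ where $b = -A x_0$ (so that $x_0 \mapsto 0$, which lies in the target subspace after choosing the right translation) and $A \in \O(d)$ is any orthogonal matrix sending the linear subspace $V := \spano(\{x - x_0 : x \in \G\cdot x_0\}) = L - x_0$ onto $\{0_{d-\daff}\} \times \R^{\daff}$. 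Such an $A$ exists because $V$ is a $\daff$-dimensional linear subspace and one may complete an orthonormal basis of $V$ to an orthonormal basis of $\R^d$, letting $A$ send this basis to the standard basis with the $V$-part landing in the last $\daff$ coordinates.

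Next I would invoke the preceding Lemma: for $a \in \E(d)$ the set $a \cdot (\G \cdot x_0) = \{a \cdot x : x \in \G \cdot x_0\}$ is the orbit of $\G' := a \G a^{-1}$ through $x_0' := a \cdot x_0$, and the map $\G' \to \R^d$, $g \mapsto g \cdot x_0'$ is injective (this is exactly the content of that lemma's proof, since conjugation by $a$ and translation by $a$ intertwine the two group actions). So $\G' \cdot x_0' = a \cdot (\G \cdot x_0)$ is immediate. It then remains to check the affine-hull identity. Since affine maps commute with taking affine hulls, $\aff(\G' \cdot x_0') = \aff(a \cdot (\G \cdot x_0)) = a \cdot \aff(\G \cdot x_0) = a \cdot L$. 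By construction $a$ maps $L$ onto $\{0_{d-\daff}\} \times \R^{\daff}$: indeed $a \cdot L = A(L) + b = A(x_0 + V) - A x_0 = A(V)$, and $A(V) = \{0_{d-\daff}\} \times \R^{\daff}$ by the choice of $A$. This gives the claimed equality.

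The main point requiring a little care — and what I would expect to be the only genuine obstacle — is the clean construction of the orthogonal matrix $A$ and the bookkeeping of where $x_0$ goes: one must ensure simultaneously that $a$ is a \emph{global} Euclidean isometry (not merely an affine bijection), that $x_0' = a \cdot x_0$ actually lands in $\{0_{d-\daff}\} \times \R^{\daff}$ (which is automatic once $a \cdot x_0 = A x_0 + b = 0$), and that the linear part $A$ carries the translation space $V$ of $L$ onto the last $\daff$ coordinates. Choosing $b = -A x_0$ handles the first two, and the Gram–Schmidt completion of an orthonormal basis of $V$ handles the third. Everything else is a routine application of the identity $\aff(f(A)) = f(\aff(A))$ for affine $f$ together with the already-established Lemma on conjugates of general configurations, so I would keep that part brief.
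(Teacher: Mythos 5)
Your proof is correct and follows essentially the same route as the paper: the paper also writes $\aff(\G\cdot x_0)=x_0+V$, picks $A\in\O(d)$ with $A(V)=\{0_{d-\daff}\}\times\R^{\daff}$, and sets $a=\iso{A}{-Ax_0}$. Your version merely spells out the bookkeeping (the affine-hull identity, the orthonormal-basis construction of $A$, and the appeal to the preceding lemma for injectivity and the orbit identity) that the paper leaves implicit.
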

\begin{proof}
There exists some $\daff$-dimensional vector space $V$ such that $\aff(\G\cdot x_0)=x_0+V$. 
Choosing $A\in\O(d)$ such that $\set{Ax}{x\in V}=\{0_{d-\daff}\}\times\R^\daff$ and setting $a=\iso A{-Ax_0}\in\E(d)$ implies the assertion.
\end{proof}
%
%

Note that in Example~\ref{ex:elementary-chains-i} $\G_1 \cdot x_{1,0}$ has $\daff=1$ and $\G_2 \cdot x_{2,0}$ has $\daff=2$. 
\begin{Lemma}\label{Lemma:Shelp2}
Let $\G<\E(d)$ be discrete and $x_0\in\R^d$ such that $\aff(\G\cdot x_0)=\{0_{d-\daff}\}\times\R^\daff$, where $\daff=\dim(\G\cdot x_0)$.
Then we have $\G<{\gplus{\O(d-\daff)}{\E(\daff)}}$.
\end{Lemma}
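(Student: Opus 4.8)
The plan is to show that every $g \in \G$ must preserve the affine subspace $\{0_{d-\daff}\} \times \R^\daff$ in a very rigid way: it must map this subspace to itself and, moreover, act as the identity on the orthogonal complement $\R^{d-\daff} \times \{0_\daff\}$. Write $V_0 = \{0_{d-\daff}\} \times \R^\daff$ and $W = \R^{d-\daff} \times \{0_\daff\}$, so that $\R^d = W \oplus V_0$ orthogonally. First I would observe that since $\G \cdot x_0 \subset \aff(\G \cdot x_0) = V_0$ (noting $0 \in V_0$, so the affine hull is in fact the linear subspace $V_0$), every element $g \in \G$ maps the set $\G \cdot x_0$ into $V_0$. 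Consequently $g$ maps $\aff(\G \cdot x_0) = V_0$ into $\aff(\G \cdot x_0) = V_0$, because an affine isometry maps the affine hull of a set onto the affine hull of its image. Since $g$ is invertible and $g^{-1} \in \G$ enjoys the same property, $g$ restricts to a bijective affine isometry of $V_0$ onto itself.

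Next I would decompose $g = \iso{A}{b}$ with $A \in \O(d)$, $b \in \R^d$, and analyze $A$ and $b$ using the invariance of $V_0$. Since $g$ maps $V_0$ into $V_0$ and $0 \in V_0$ (as $\G \cdot x_0 \ni x_0$ and we may even take a point of $V_0$; more carefully, $g \cdot 0 = b$ must lie in $V_0$ because $V_0$ is a linear subspace containing the affine hull, hence $b \in V_0$), we get $b \in V_0$. Then for any $v \in V_0$, $Av + b = g \cdot v \in V_0$, so $Av \in V_0$; thus $A(V_0) \subseteq V_0$, and since $A$ is orthogonal, $A(V_0) = V_0$ and likewise $A(W) = W$. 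Hence $A$ has block-diagonal form $A = A_1 \oplus A_2$ with $A_1 \in \O(d-\daff)$ acting on $W$ and $A_2 \in \O(\daff)$ acting on $V_0$, and $b = \iso{0}{b_2}$ with $b_2 \in \R^\daff$. This already shows $\G < \gplus{\O(d-\daff)}{\E(\daff)}$ once we check the embedding conventions match — i.e.\ that the homomorphism $\gplus{}{}$ sends $(A_1, \iso{A_2}{b_2})$ precisely to the block matrix $\iso{A_1 \oplus A_2}{(0, b_2)}$, which is exactly what we have produced.

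The one genuine subtlety — and the step I expect to require the most care — is confirming that $A_1$ can be an arbitrary element of $\O(d-\daff)$ rather than being forced to be the identity, since the statement of the lemma only claims $\G < \gplus{\O(d-\daff)}{\E(\daff)}$ and does {\em not} assert $A_1 = I_{d-\daff}$; so in fact no further argument is needed here, and I should be careful not to over-claim. The decomposition of $A$ into blocks is the crux, and it rests entirely on the orthogonality of $A$ together with $A(V_0) = V_0$: for an orthogonal map, invariance of a subspace forces invariance of its orthogonal complement. I would spell this out as: if $A(V_0) = V_0$ then for $w \in W$ and any $v \in V_0$ we have $\langle Aw, v \rangle = \langle Aw, A A^{-1} v\rangle = \langle w, A^{-1} v \rangle = 0$ since $A^{-1} v \in A^{-1}(V_0) = V_0$ (using $A(V_0)=V_0 \Rightarrow A^{-1}(V_0) = V_0$) and $w \perp V_0$. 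Hence $Aw \in W$, i.e.\ $A(W) \subseteq W$, and equality follows by dimension count. Assembling these observations gives $g = \gplus{A_1}{\iso{A_2}{b_2}} \in \gplus{\O(d-\daff)}{\E(\daff)}$ for every $g \in \G$, which is the claim.
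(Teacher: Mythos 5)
Your proposal is correct and follows essentially the same route as the paper's proof: establish that the linear part of each $g\in\G$ preserves $V_0=\{0_{d-\daff}\}\times\R^\daff$ (and hence, by orthogonality, its complement), and that the translation part lies in $V_0$. The only cosmetic difference is that you derive the invariance from the fact that affine isometries preserve affine hulls and that $0\in V_0$, whereas the paper writes points of $V_0$ as affine combinations of orbit points with coefficients summing to zero; both arguments are sound and equivalent in substance.
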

\begin{proof}
Set $V=\{0_{d-\daff}\}\times\R^\daff=\aff(\G\cdot x_0)$. For given $g\in\G$ we define the map $\phi\colon\R^d\to\R^d$, $x\mapsto\rot(g)x$. First we show that $V$ is invariant under $\phi$.
Let $x\in V$.
Since $V=\aff(\G\cdot x_0)-x_0$, there exist some $n\in\N$, $x_1,\dots,x_n\in\G\cdot x_0$ and $\alpha_1,\dots,\alpha_n\in\R$ such that $x=\sum_{i=1}^n\alpha_ix_i$ and $\sum_{i=1}^n\alpha_i=0$.
It holds
\[\rot(g)x=\sum_{i=1}^n\alpha_i\rot(g)x_i=\sum_{i=1}^n\alpha_i (g\cdot x_i)\in V.\]
Thus we have $\set{\rot(g)\tilde x}{\tilde x\in V}\subset V$.
Since $\rot(g)$ is invertible, it holds $\set{\rot(g)\tilde x}{\tilde x\in V}=V$.

Since $\rot(g)$ is orthogonal, also the complement $V^\bot=\R^{d-\daff}\times\{0_\daff\}$ is invariant under $\phi$.
This implies $\rot(g)\in\O(d-\daff)\oplus\O(\daff)$.
It holds $\trans(g)=g\cdot x_0-\rot(g)x_0\in V$ and thus, $g\in\gplus{\O(d-\daff)}{\E(\daff)}$.
\end{proof}
\begin{Lemma}
Let $\G<\E(d)$ be discrete and $x_0\in\R^d$ such that $\aff(\G\cdot x_0)=\{0_{d-\daff}\}\times\R^\daff$, where $\daff=\dim(\G\cdot x_0)$.
Let $\G'=\set{\gplus{I_{d-\daff}}g}{g\in\E(\daff),\exists A\in\O(d-\daff): \gplus Ag\in\G}$ and
\begin{align*}
\phi\colon&\G\to\G'\\
&\gplus Ag\mapsto\gplus{I_{d-\daff}}g\qquad\text{if }A\in\O(d-\daff),g\in\E(\daff)\text{ and }\gplus Ag\in\G.
\end{align*}
Then $\G'$ is a discrete subgroup of $\E(d)$, $\phi$ is an epimorphism and $\G\cdot x_0=\G'\cdot x_0$.
\end{Lemma}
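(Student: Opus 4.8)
The plan is to verify the four claimed properties in turn, all of which follow from unwinding the definition of $\G'$ and $\phi$ together with the structural result of Lemma~\ref{Lemma:Shelp2}. First I would invoke Lemma~\ref{Lemma:Shelp2} to record that $\G < \gplus{\O(d-\daff)}{\E(\daff)}$, so that every $g \in \G$ can indeed be written uniquely as $\gplus{A}{h}$ with $A \in \O(d-\daff)$ and $h \in \E(\daff)$; this is precisely what makes the map $\phi$ well-defined, since the decomposition $g = \gplus{A}{h}$ determines $A$ and $h$ uniquely. Hence $\phi(g) = \gplus{I_{d-\daff}}{h}$ is unambiguous, and $\G' = \phi(\G)$ by construction.

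Next I would check that $\phi$ is a group homomorphism. Writing the group law $\gplus{A_1}{h_1}\gplus{A_2}{h_2} = \gplus{A_1 A_2}{h_1 h_2}$ that comes from the definition of $\gplus{}{}$ and the semidirect product structure of $\E(\daff)$, one sees immediately that $\phi(\gplus{A_1}{h_1}\gplus{A_2}{h_2}) = \gplus{I_{d-\daff}}{h_1 h_2} = \phi(\gplus{A_1}{h_1})\phi(\gplus{A_2}{h_2})$. Since $\phi$ is surjective onto $\G'$ by definition of $\G'$, it remains to show injectivity, which is the crux of the argument and where I expect the only real work to lie. Suppose $\phi(g) = \id$, i.e. $g = \gplus{A}{h}$ with $\gplus{I_{d-\daff}}{h} = \id$, forcing $h = \id_{\E(\daff)}$, so $g = \gplus{A}{\id} = \iso{A \oplus I_\daff}{0}$ acts as the rotation $A$ on $\R^{d-\daff} \times \{0_\daff\}$ and trivially on $\{0_{d-\daff}\} \times \R^\daff$. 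Then for every $y \in \G\cdot x_0 \subset \{0_{d-\daff}\} \times \R^\daff$ we have $g \cdot y = y$, but also $g \cdot x_0 \in \G\cdot x_0$; writing $x_0 = (x_0', x_0'')$ and using that $g$ fixes the second component while $\G \cdot x_0$ spans $\{0_{d-\daff}\}\times\R^\daff$ affinely, one concludes $g$ fixes all of $\G \cdot x_0$ pointwise. By injectivity of $g \mapsto g \cdot x_0$ on $\G$ applied to the pair $g$ and $\id$ (both send $x_0$ to the same point, since $g$ must fix $x_0$), we get $g = \id$. Thus $\ker\phi$ is trivial and $\phi$ is an isomorphism. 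A slightly cleaner route: since $\G\cdot x_0 \subset \{0_{d-\daff}\}\times\R^\daff$, the action of $g = \gplus{A}{h}$ on $x_0$ depends only on $h$ (the $\O(d-\daff)$-block acts on a coordinate block where $x_0$'s orbit lives in the zero subspace, once we also note $\trans(g) \in \{0_{d-\daff}\}\times\R^\daff$ by Lemma~\ref{Lemma:Shelp2}); hence $g \cdot x_0 = (\gplus{I_{d-\daff}}{h}) \cdot x_0 = \phi(g)\cdot x_0$ for all $g \in \G$.

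This last identity $g \cdot x_0 = \phi(g) \cdot x_0$ simultaneously delivers the remaining three assertions. It gives $\G \cdot x_0 = \phi(\G) \cdot x_0 = \G' \cdot x_0$ directly. It gives injectivity of $g \mapsto g \cdot x_0$ on $\G'$: if $g_1', g_2' \in \G'$ with $g_1' \cdot x_0 = g_2' \cdot x_0$, pick preimages $g_i \in \G$ with $\phi(g_i) = g_i'$, so $g_1 \cdot x_0 = \phi(g_1)\cdot x_0 = \phi(g_2)\cdot x_0 = g_2 \cdot x_0$, hence $g_1 = g_2$ by injectivity on $\G$, hence $g_1' = g_2'$. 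And it also recovers injectivity of $\phi$ from this: if $\phi(g_1) = \phi(g_2)$ then $g_1 \cdot x_0 = g_2 \cdot x_0$, so $g_1 = g_2$. Finally, that $\G'$ is a discrete subgroup of $\E(d)$ follows since $\G'$ is the isomorphic image of a group under $\phi$ (so it is a subgroup of $\E(d)$), and its orbit $\G' \cdot x_0 = \G \cdot x_0$ is discrete; discreteness of all orbits then follows because $\G' < \gplus{\O(d-\daff)}{\E(\daff)}$ has the same translational part structure as $\G$ — more concretely, $\G'$ is conjugate-in-spirit to a quotient-type object, but most directly one observes that $\phi$ changes only the (compact) $\O(d-\daff)$ components to the identity, which cannot destroy discreteness of orbits. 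The main obstacle is thus the injectivity of $\phi$; everything else is bookkeeping once the key identity $g\cdot x_0 = \phi(g)\cdot x_0$ is in hand.
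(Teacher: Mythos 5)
Your argument is correct and follows essentially the same route as the paper: everything reduces to the identity $g\cdot x_0=\phi(g)\cdot x_0$, which holds because $x_0\in\{0_{d-\daff}\}\times\R^{\daff}$ and $\G<\gplus{\O(d-\daff)}{\E(\daff)}$ by Lemma~\ref{Lemma:Shelp2}, and from it surjectivity and injectivity of $\phi$, the equality of orbits, and injectivity of the orbit map on $\G'$ all follow exactly as in the paper. The only step you state more loosely is the discreteness of $\G'$ (the paper notes that $\G'\to\G'\cdot x_0$ is a homeomorphism onto a discrete set); your remark that replacing the $\O(d-\daff)$-components by the identity cannot destroy discreteness of orbits is correct and is made precise by observing that $\G'\cdot(x_1,x_2)=\{x_1\}\times P$, where $P$ is the image of the discrete set $\G\cdot(0_{d-\daff},x_2)$ under the projection onto the last $\daff$ coordinates.
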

\begin{proof} 
By Lemma~\ref{Lemma:Shelp2} we have $\G<\gplus{\O(d-\daff)}{\E(\daff)}$. It is clear that $\phi$ is an epimorphism. 
If $x=x_1+x_2$ with $x_1\in\R^{d-\daff}\times\{0_{\daff}\}$ and $x_2\in\{0_{d-\daff}\}\times\R^\daff$, then it holds $\phi(g)\cdot x=x_1+g\cdot x_2$ for all $g\in\G$ and thus $\G'\cdot x=x_1+\G\cdot x_2$. This shows that $\G'$ is discrete and, for $x=x_0$ in particular, that $\G'\cdot x_0=\G\cdot x_0$.
\end{proof}
\begin{Remark}
\begin{enumerate}
\item Let $\G<\E(d)$ be discrete, $x_0\in\R^d$ and $A=\aff(\G\cdot x_0)$.
For all $g\in\G$ it holds $\set{g\cdot x}{x\in A}=A$.
%
%
\item Let $\G<\E(d)$ be discrete and $x_0\in\R^d$.
Let $V$ be the vector space such that $\aff(\G\cdot x_0)=x_0+V$.
Then for all $g\in\G$ it holds $\set{\rot(g)x}{x\in V}=V$.
%
%
\end{enumerate}
\end{Remark}
We close this section with some general remarks on the representation of objective structures.
\begin{Remark}
\begin{enumerate}
\item The representation of an objective structure by a discrete subgroup of $\E(d)$ and a point in $\R^d$ is not unique. Indeed, let $S=\{\pm e_1,\pm e_2\}\subset\R^2$. Denote by $R$ be the rotation matrix by the angle $\pi/2$ and by $P$ the reflection with $Pe_1=e_2$ and $Pe_2=e_1$. The cyclic group $\G_1=\angles[\big]{\iso{R}0}<\E(2)$ and the Klein four-group $\G_2=\angles[\big]{\iso{P}0,\iso{-P}0}<\E(2)$ are not isomorphic. Yet, $S = \G_1\cdot e_1 = \G_2\cdot e_1$. And both maps $\G_1\to S$, $g\mapsto g\gdot x$ and $\G_2\to S$, $g\mapsto g\gdot x$ are even bijective.
\item Generically, an objective structure $S$ can be faithfully represented as the orbit of a point $x \in \R^d$ under the action of a discrete subgroup of $\G$ of $\E(d)$, \ie, such that $\G\to S$, $g\mapsto g\gdot x$ is bijective, see the following point. However, there are counterexamples: 

Let be given a regular icosahedron centered at the origin.
Let $S$ be the set of the 30 centers of the edges of the icosahedron (\ie $S$ is the set of the vertices of the rectified icosahedron and moreover, $S$ is the set of the vertices of an icosidodecahedron).
The rotation group $\mathcal I<\SO(3)$ of the icosahedron has order 60, see, \eg, \cite[Section 2.4]{Grove1985} and we have $S=(\mathcal I\times\{0_3\})\gdot x_0$ for every point $x_0\in S$.
Now we suppose that there exist a discrete group $\G<\E(3)$ and a point $x\in\R^3$ such that the map $\G\to S$, $g\mapsto g\gdot x$ is injective.
Then we have $\abs\G=\abs{S}=30$.
Moreover, the group $\G$ is isomorphic to a finite subgroup of $\O(3)$, see, \eg, \cite[Section 4.12]{Opechowski1986}.
The finite subgroups of $\O(3)$ are classified, see, \eg, \cite[Theorem 2.5.2]{Grove1985}, and since every discrete subgroup of $\O(3)$ of order 30 contains an element of order 15, the group $\G$ contains an element $g$ of order 15.
Since the order of $g$ is odd, we have $\rot(g)\in\SO(3)$, \ie $g$ is a rotation.
Thus, the set $S$ contains 15 points which lie in the same plane.
This implies that $S$ cannot be the orbit of $\G$, and we have a contradiction.
%
%
\item For each discrete group $\G<\E(d)$, a.e.\ $x\in\R^d$ is such that the map $\G\to\R^d$, $g\mapsto g\gdot x$ is injective. Indeed, if $g, h \in \G$, $g\ne h$, then the affine space $\set{x\in\R^d}{g \cdot x = h \cdot x}$ has codimension at least $1$. Since $\G$ is at most countable, the claim follows. 
\item For each discrete group $\G<\E(d)$ and all $x\in\R^d$ the stabilizer group $\G_x = \set{g\in\G}{g\cdot x=x}$ is finite. 
To see this, one may use the previous point to choose $x'\in\R^d$ with $\norm{x-x'}<1$ such that $\G\to\R^d$, $g\mapsto g\gdot x'$ is injective. Then the discrete set $\G_x\cdot x'$ lies in the ball of radius $1$ centered at $x$ so that $\G_x\cdot x'$ and hence $\G_x$ is finite. 
\end{enumerate}
\end{Remark} 
%

\section{A discrete Korn type inequality}\label{section:seminorm}
%

This is the core section of our contribution. In particular, for a given interaction range $\RR$ we introduce the two seminorms $\norm \fdot_\RR$ and $\norm \fdot_{\RR,0}$. They measure the distance of a deformation of an objective structure to the set of (infinitesimally) rigid motions locally and, respectively, (intrinsically) globally. Our main result will be that -- under suitable conditions -- $\norm \fdot_\RR$ and $\norm \fdot_{\RR,0}$ are equivalent. 

We begin by introducing the seminorms $\norm \fdot_\RR$ in Subsection~\ref{subsection:deformation-seminorms} and show in Subsection~\ref{subsection:local-equivalence} that they are essentially independent of the particular choice of $\RR$. In Subsection~\ref{subsection:Korn} we first define the seminorms $\norm \fdot_{\RR,0}$. We then provide the main preparatory technical step by proving the analytical Lemma~\ref{Lemma:helpTheorem} and finally establish our main Theorem~\ref{Theorem:Korn}. We close this section by explicitly computing the kernel of the relevant seminorms in Subsection~\ref{subsection:seminorms-kernels}.

More precisely, given a finite interaction range $\RR$, one considers finite patches of a configuration by restricting to suitable neighborhoods of particles and averages the deviations from the set of rigid body motions (or a subclass thereof) over all such patches. The first seminorm $\norm \fdot_\RR$ is local in the sense that the full set of rigid motions is considered and so different finite patches can be close to completely different rigid motions, see Definition~\ref{Definition:UtransUrot}. The second seminorm $\norm \fdot_{\RR,0}$ is `intrinsically global' as the set of rigid motions is restricted to those that vanish when both preimage and target space are projected to the subspace that is invaded by the objective structure, see Definition~\ref{Definition:Seminorm-Null} for a precise statement. (For bulk structures defined in terms of a space group this is the whole space and the kernel of the resulting seminorm consists of translations only.) 

Our main result is Theorem~\ref{Theorem:Korn} (see also Theorem~\ref{Theorem:EquivalenceAll}) which states that these two seminorms are equivalent as long as the interaction range is sufficiently rich. We thus establish a Korn-type estimate for objective structures. For bulk structures we indeed obtain a full discrete Korn inequality. For lower dimensional structures this is in fact not to be expected as the structure might show buckling exploring the ambient space. Still, Theorem~\ref{Theorem:Korn} shows that intrinsically also such structures are rigid.

\subsection{Deformations and local rigidity seminorms}\label{subsection:deformation-seminorms}
%
Let $\G<\E(d)$ be a discrete group of Euclidean isometries and $x_0\in\R^d$. The set $\G\cdot x_0$ is an objective structure and $\G_{x_0}$ is the stabilizer subgroup. Recalling the discussion directly after Theorem~\ref{Theorem:Browndecomposablediscretegroup}, without loss of generality we assume in the following that $\G<\gplus{\O(d_1)}{\SG}$, $d=d_1+d_2$, that $\T \subset \G$ and $\CC_N$ (for $N \in M_0$) have been chosen and that $\G\cdot x_0\subset\{0_{d-\daff}\}\times\R^\daff$ and $\G$ acts trivially on $\R^{d-\daff}\times\{0_{\daff}\}$, $\daff=\dim(\G\cdot x_0)$.

We consider deformation mappings $y\colon\G\cdot x_0\to\R^d$. One can describe such a mapping by the induced `deformation' $v\colon\G/\G_{x_0}\to\R^d$ on left cosets which is given by $v(g)=y(g\cdot x_0)$.
In order to describe the action of a deformation at $g\cdot x_0$ in relation to its position within the whole structure $\G\cdot x_0$ in its environment (cf.\ \eqref{eq:DD} below), it turns out useful, see, \eg, Remark~\ref{Remark:UsefulLeftTranslationInvariantFourierPossible}\ref{item:UsefulLeftTranslationInvariantFourierPossible}, to define an associated `group displacement mapping' $u\colon\G\to\R^d$ such that
\[v(g)=\frac1{\abs{\G_{x_0}}}\sum_{g'\in g}g'\cdot(x_0+u(g'))\qquad\text{for all }g\in\G/\G_{x_0},\]
\eg by choosing $u(g')=\rot(g')^T(v(g'\G_{x_0})-g'\cdot x_0)$. More generally, for any mapping $u\colon\RR\to\R^d$ on $\RR=\RR\G_{x_0}\subset\G$ we define the averaged mapping $\proj{u}\colon\RR\to\R^d$ by 
\begin{align}\label{eq:pi-proj}
\proj{u}(g')=\frac1{\abs{\G_{x_0}}}\sum_{h\in \G_{x_0}}\rot(h)u(g'h)\qquad\text{for all }g'\in\RR.
\end{align}
So $\rot(g')\proj{u}(g')=v(g'\G_{x_0})-g'\cdot x_0$ only depends on $g=g'\G_{x_0}\in\RR/\G_{x_0}$ and we may write this expression as $\rot(g)\proj{u}(g)$ with no ambiguity. 
In particular, $v$ is the translation $v(g)=g\cdot x_0+a$ for all $g\in\G/\G_{x_0}$ and an $a\in\R^d$ if and only if $\rot(g)\proj{u}(g)=a$ for all $g\in\G$ and $v$ is the rotation $v(g)=R(g\cdot x_0)$ for all $g\in\G/\G_{x_0}$ and an $R\in\SO(d)$ if and only if $\rot(g)\proj{u}(g)=(R-I_d)(g\gdot x_0)$ for all $g\in\G$. In case $\G_{x_0}=\{\id\}$ we simply have $\proj{u}=u$.

As $\G\cdot x_0$ is typically infinite and we want to allow for deformations of long wave-length, we consider deformations $v$ corresponding to a periodic displacement $u$. A crucial point in the following is then to provide estimates that do not depend on the characteristics of the periodicity. 

Let $\RR$ be a finite subset of $\G$ such that $\RR\G_{x_0}=\RR$. 
Suppose $u\colon\G\to\R^d$ is $\T^N$-periodic for some $N\in\MM$. A natural quantity to measure the size of the associated deformation $v$ locally `modulo isometries' is  
\begin{equation}\label{eq:CC}
\parens[\bigg]{\frac1{\abs{\CC_N}}\sum_{g\in\CC_N}\dist^2\parens[\Big]{(v(h))_{h\in g\RR/\G_{x_0}},\set[\Big]{(a\gdot(h\gdot x_0))_{h\in g\RR/\G_{x_0}}}{a\in\E(d)}}}^{\frac12},
\end{equation}
where $\dist$ is the induced metric of the Euclidean norm on $(\R^d)^{g\RR/\G_{x_0}}$.
With the aim to consider small displacements $u\approx0$, for every $g\in\CC_N$ we linearize by observing that, for $U\subset\E(d)$ a sufficiently small open neighborhood of $\id$, the set
\[\set[\Big]{(a\gdot(h\gdot x_0))_{h\in g\RR/\G_{x_0}}}{a\in U}\]
is a manifold whose tangent space at the point $(h\cdot x_0)_{h\in g\RR/\G_{x_0}}$ is
\[\Viso{g\RR}=\set[\Big]{\parens[\big]{b+S(h\gdot x_0)}_{h\in g\RR/\G_{x_0}}}{b\in\R^d,S\in\Skew(d)}.\]
(This follows from the fact that the tangent space of $\E(d)$ at $\id$ is given by $\Skew(d) \times \R^d$.) 
A Taylor expansion shows that, in terms of $\proj{u}$ as defined in \eqref{eq:pi-proj}, 
\begin{align}
&\dist\parens[\Big]{(v(h))_{h\in g\RR/\G_{x_0}},\set[\Big]{(a\gdot(h\gdot x_0))_{h\in g\RR/\G_{x_0}}}{a\in\E(d)}}\nonumber\\
&\quad\approx\dist\parens[\Big]{(\rot(h)\proj{u}(h))_{h\in g\RR/\G_{x_0}},\Viso{g\RR}}\nonumber\\
&\quad=\dist\parens[\Big]{(\rot(h)\proj{u}(gh))_{h\in \RR/\G_{x_0}},\Viso{\RR}},\label{eq:DDDDD}
\end{align}
where in the second step we have used that $b+S(h\cdot x_0)=\rot(g)(\tilde b+\tilde S(\tilde h\cdot x_0))$ for $\tilde{b}=\rot(g)^T(b+S\tau(g))$, $\tilde S=\rot(g)^TS\rot(g)$ and $\tilde h=g^{-1}h$.
Similar to $\Viso\RR$ we define
\[\Uiso\RR=\set[\Big]{u\colon\RR\to\R^d}{\exists\+b\in\R^d\;\exists\+ S\in\Skew(d)\;\forall g\in\RR: \rot(g)\proj{u}(g)=b+S(g\gdot x_0)}\]
and with \eqref{eq:DDDDD} it follows that
\begin{align}
&\dist\parens[\Big]{(v(h))_{h\in g\RR/\G_{x_0}},\set[\Big]{(a\gdot(h\gdot x_0))_{h\in g\RR/\G_{x_0}}}{a\in\E(d)}}\nonumber\\
&\quad\approx\min\set[\bigg]{\sum_{h\in\RR/\G_{x_0}}\norm{\rot(h)\proj{u}(gh)-(b+S(h\cdot x_0))}^2}{b\in\R^d,S\in\Skew(d)}^{\frac12}\nonumber\\
&\quad=\frac1{\sqrt{\abs{\G_{x_0}}}}\min\set[\bigg]{\sum_{h'\in\RR}\norm{\rot(h')u(gh')-\rot(h')u_\mathrm{iso}(h')}^2}{u_\mathrm{iso}\in\Uiso\RR}^{\frac12}\nonumber\\
&\quad=\frac1{\sqrt{\abs{\G_{x_0}}}}\dist\parens{u(g\fdot)|_\RR,\Uiso\RR}.\label{eq:DD}
\end{align}
Here we have used that the optimal $u_\mathrm{iso}$ satisfies 
\begin{align*}
\rot(h')u(gh')-\rot(h')u_\mathrm{iso}(h')=\rot(h'')u(gh'')-\rot(h'')u_\mathrm{iso}(h'')
\end{align*}
for each $h\in\RR/\G_{x_0}$ and $h',h''\in h$.
By \eqref{eq:DD} and dividing \eqref{eq:CC} by $\abs{\G_{x_0}}$, we are led to introduce the seminorm $\norm\fdot_\RR$ by
\[\norm u_\RR=\parens[\bigg]{\frac1{\abs{\CC_N}}\sum_{g\in\CC_N}\dist^2\parens[\big]{u(g\fdot)|_\RR,\Uiso\RR}}^{\frac12}.\]
More precisely and in agreement with these definitions we have the following general definition. Recall the definition of $\Per(\G,\C^{m\times n})$ from Section~\ref{subsection:structure}.
\begin{Definition}\label{Definition:UtransUrot}
We define the vector spaces
\begin{align*}
\UPerC&:=\Per(\G,\C^{d\times 1})=\set{u\colon\G\to\C^d}{u\text{ is periodic}}
\shortintertext{and}
\UPer&:=\set{u\colon\G\to\R^d}{u\text{ is periodic}}\subset\UPerC.
\end{align*}
For all $\RR\subset\G$ such that $\RR\G_{x_0}=\RR$ we define the vector spaces
\begin{align*}
\Utrans{\RR}&:=\set[\Big]{u\colon\RR\to\R^d}{\exists\+ a\in\R^d\;\forall g\in\RR:\rot(g)\proj{u}(g)=a},\\
\Urot{\RR}&:=\set[\Big]{u\colon\RR\to\R^d}{\exists\+ S\in\Skew(d)\;\forall g\in\RR:\rot(g)\proj{u}(g)=S(g\gdot x_0-x_0)}
\intertext{with $\proj{u}$ as defined in \eqref{eq:pi-proj} and}
\Uiso{\RR}&:=\Utrans{\RR}+\Urot{\RR}.
\end{align*}
For all finite sets $\RR\subset\G$ such that $\RR\G_{x_0}=\RR$ we define the norm
\begin{align*}
\norm\fdot\colon&\{u\colon\RR\to\R^d\}\to[0,\infty),\qquad
u\mapsto\parens[\bigg]{\sum_{g\in\RR}\norm{u(g)}^2}^{\frac12}
\end{align*}
and the seminorm
\begin{align*}
\norm\fdot_{\RR}\colon&\UPer\to[0,\infty),\\
&u\mapsto\parens[\Big]{\frac1{\abs{\CC_N}}\sum_{g\in\CC_N}\norm{\pi_{\Uiso\RR}(u(g\fdot)|_\RR)}^2}^{\frac12}\quad\text{if $u$ is $\T^N$-periodic,}
\end{align*}
where $\pi_{\Uiso\RR}$ is the orthogonal projection on $\{u\colon\RR\to\R^d\}$ with respect to the scalar product induced by the norm $\norm\fdot$ with kernel $\Uiso\RR$.
\end{Definition}
\begin{Remark}\label{Remark:UsefulLeftTranslationInvariantFourierPossible}
\begin{enumerate}
\item The definition of $\norm\fdot_\RR$ is independent of the choice of $\CC_N$.
\item Instead of $\Urot\RR$ one could alternatively consider the vector space
\[\set[\Big]{u\colon\RR\to\R^d}{\exists\+ S\in\Skew(d)\;\forall g\in\RR:\rot(g)\proj{u}(g)}=S(g\gdot x_0),\]
whose sum with $\Utrans\RR$ is also $\Uiso\RR$.
We prefer $\Urot\RR$ in view of Definition~\ref{Definition:nablanorm}.
\item\label{item:UsefulLeftTranslationInvariantFourierPossible} The seminorm $\norm\fdot_\RR$ is left-translation invariant. Thus it can also be represented by means of a convolution operator, see, \eg, \cite[Lemma 5.4]{SchmidtSteinbach:21c}.
\end{enumerate}
\end{Remark}

It is worth noticing that, in view of the discrete nature of the underlying objective structure, the seminorm $\norm\fdot_\RR$ is equivalent to a seminorm acting on a `discrete derivative' in form of a suitable finite difference stencil of $u$. 
\begin{Definition}
For all $u\in\UPer$ and finite sets $\RR\subset\G$ such that $\RR\G_{x_0}=\RR$ we define the \emph{discrete derivative}
\begin{align*}
\nabla_\RR u\colon&\G\to\{v\colon\RR\to\R^d\}\\
&g\mapsto(\nabla_\RR u(g)\colon\RR\to\R^d,h\mapsto \proj{u}(gh)-\rot(h)^T\proj{u}(g)).
\end{align*}
\end{Definition}
\begin{Remark}
Let $\RR\subset\G$ be finite such that $\RR\G_{x_0}=\RR$ and assume that $u\in\UPer$ is induced by an associated deformation mapping such that $v\colon\G/\G_{x_0}\to\R^d,g\mapsto g\cdot x_0+\rot(g)\proj{u}(g)$.
Then $\nabla_\RR u$ encodes finite differences of $v$ via the relation 
\[v(gh\G_{x_0})-v(g\G_{x_0})=(gh)\gdot x_0-g\gdot x_0+\rot(gh)(\parens{\nabla_\RR u(g))(h)}\]
for all $g\in\G$ and $h\in\RR$.
%
%
\end{Remark}
If $u\in\UPer$ is $\T^N$-periodic for some $N\in\MM$ and $\RR\subset \G$ is finite, then also the discrete derivative $\nabla_{\RR}u$ is $\T^N$-periodic.

\begin{Definition}\label{Definition:nablanorm}
For each finite set $\RR\subset\G$ we define the seminorm
\begin{align*}
\nablanorm\fdot\RR\colon&\UPer\to[0,\infty)\\
&u\mapsto\parens[\Big]{\frac1{\abs{\CC_N}}\sum_{g\in\CC_N}\norm{\pi_{\Urot\RR}(\nabla_\RR u(g))}^2}^{\frac12}\quad\text{if $u$ is $\T^N$-periodic,}
\end{align*}
where $\pi_{\Urot\RR}$ is the orthogonal projection on $\{u\colon\RR\to\R^d\}$ with respect to the norm $\norm\fdot$ with kernel $\Urot\RR$.
\end{Definition}
\begin{Remark}
\begin{enumerate}
\item We have $\nablanorm\fdot\RR=\nablanorm\fdot{\RR\setminus\G_{x_0}}$ for all finite sets $\RR\subset\G$ such that $\RR\G_{x_0}=\RR$.
\item Let $t_i=\iso{I_d}{e_i}$ for $i=1,\dots,d$.
If $\G=\angles{t_1,\dots,t_d}$ and $\RR=\{t_1,\dots,t_d\}$, then $\norm{\pi_{\Urot\RR}(\nabla_\RR u(g))}=\norm{(\nabla_\RR u(g)+(\nabla_\RR u(g))^T)/2}$ for all $u\in\UPer$ and $g\in\G$.
\end{enumerate}
\end{Remark}

\begin{Proposition}\label{Proposition:nablanormequivalent-ohne-0}
Let $\RR\subset\G$ be finite such that $\RR\G_{x_0}=\RR$ and $\G_{x_0}\subset\RR$.
Then the seminorms $\norm\fdot_\RR$ and $\nablanorm\fdot\RR$ are equivalent.
\end{Proposition}

\begin{proof}
Let $\RR\subset\G$ be finite such that $\RR\G_{x_0}=\RR$ and $\G_{x_0}\subset\RR$.
Let $u\in\UPer$ and $N\in\MM$ such that $u$ is $\T^N$-periodic.

We have
\begin{align*}
\nablanorm u\RR^2&=\frac1{\abs{\CC_N}}\sum_{g\in\CC_N}\norm[\big]{\pi_{\Urot\RR}(\nabla_\RR u(g))}^2\\
&=\frac1{\abs{\CC_N}}\sum_{g\in\CC_N}\norm[\big]{\pi_{\Urot\RR}\circ\pi(u(g\fdot)|_\RR)}^2,
\end{align*}
where the mapping $\pi\colon\{v\colon\RR\to\R^d\}\to\{v\colon\RR\to\R^d\}$, $v\mapsto\nabla_\RR v(\id)$ is a projection with kernel $\Utrans\RR$.
Thus we have
\begin{equation}\label{eq:vOne}
\nablanorm u\RR^2=\frac1{\abs{\CC_N}}\sum_{g\in\CC_N}\norm[\big]{u(g\fdot)|_\RR}_1^2,
\end{equation}
where 
\begin{align*}
\norm\fdot_1\colon\{v\colon\RR\to\R^d\}\to\R, \quad v\mapsto\norm[\big]{\pi_{\Urot\RR}\circ\pi(v)}
\end{align*}
is a seminorm with the kernel $\Urot\RR+\Utrans\RR=\Uiso\RR$.
Moreover, we have
\begin{equation}\label{eq:vTwo}
\norm u_\RR^2=\frac1{\abs{\CC_N}}\sum_{g\in\CC_N}\norm[\big]{\pi_{\Uiso\RR}(u(g\fdot)|_\RR)}^2.
\end{equation}
By \eqref{eq:vOne}, \eqref{eq:vTwo} and since the two seminorms $\norm\fdot_1$ and $\norm{\pi_{\Uiso\RR}(\fdot)}$ have the same kernel $\Uiso\RR$ and are thus equivalent, the seminorms $\nablanorm\fdot\RR$ and $\norm\fdot_\RR$ are equivalent.
\end{proof}

\subsection{Equivalence of local rigidity seminorms}
\label{subsection:local-equivalence}

Our aim is to show that, up to equivalence, $\norm\fdot_{\RR}$ does not depend on the particular choice of $\RR$ as long as $\RR$ is rich enough. We begin with some elementary preliminaries. 
\begin{Definition}\label{Definition:Property}
$\RR\subset\G$ is an \emph{admissible} neighborhood range of $\id$ if $\RR$ is finite, $\RR\G_{x_0}=\RR$ and there exist two sets $\RR',\RR''\subset \G$ with $\RR'\RR''\subset\RR$ such that $\id\in\RR'\cap\RR''$, $\RR'$ generates $\G$ and 
\[\aff(\RR''\cdot x_0)=\aff(\G\cdot x_0).\]
\end{Definition}
Admissibility of a neighborood range $\RR$ of $\id$ can be interpreted as a second order property of the stencil $\RR$: it contains a product of two subsets which themselves are rich enough so that the orbit of the first one spans the same affine space as $\G\cdot x_0$ and the second one generates $\G$.
This will be crucial in Lemma~\ref{Lemma:hRSIequivalence} below.
\begin{Example}\label{ex:elementary-chains-ii}
For the atomic chains introduced in Example~\ref{ex:elementary-chains-i} in terms of the groups $\G_1=\angles{t_1}$ and $\G_2=\angles{t_2}$ admissible neighborhood ranges of $\id$ are given by, e.g., $\{\id,t_1,t_1^2\}\subset\G_1$ and $\{\id,t_2,t_2^2,t_2^3\}\subset\G_2$, respectively. 
\end{Example}
\begin{Lemma}\label{Lemma:MatrixRank}
Suppose that $\RR\subset\G$ is finite and such that $\id\in\RR$ and $\aff(\RR\cdot x_0)=\aff(\G\cdot x_0)$. 
Then there exists some $A\in\R^{\daff\times\abs\RR}$ of rank $\daff$ such that in $(\R^d)^\RR \cong \R^{d\times\abs\RR}$
\[(g\cdot x_0-x_0)_{g\in\RR}=\parens[\bigg]{\begin{matrix}0_{d-\daff,\abs\RR}\\ A\end{matrix}}.\]
\end{Lemma}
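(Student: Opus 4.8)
The plan is to unwind the definitions and reduce everything to a linear-algebra statement about the configuration points. Since $\RR$ has Property~$\propone$, we know $\id \in \RR$, $\RR$ is finite, and $\aff(\RR \cdot x_0) = \aff(\G \cdot x_0)$. By the standing assumptions in Section~\ref{subsection:deformation-seminorms}, $\G \cdot x_0 \subset \{0_{d-\daff}\} \times \R^{\daff}$ and $\daff = \dim(\G\cdot x_0)$. In particular each point $g \cdot x_0$, $g \in \RR$, lies in $\{0_{d-\daff}\} \times \R^{\daff}$, so the vector $g \cdot x_0 - x_0$ also has its first $d - \daff$ coordinates equal to zero. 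Collecting these column vectors into the matrix $(g \cdot x_0 - x_0)_{g \in \RR} \in \R^{d \times \abs{\RR}}$, the top $d - \daff$ rows vanish, which gives the block form $\left(\begin{smallmatrix} 0_{d-\daff,\abs\RR}\\ A\end{smallmatrix}\right)$ for some $A \in \R^{\daff \times \abs\RR}$; this is just reading off the coordinates.

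It remains to check that $\rank A = \daff$. First I would note that the columns of $A$ are precisely the vectors $\widetilde{g \cdot x_0} - \widetilde{x_0}$, where $\widetilde{\fdot}$ denotes the last $\daff$ coordinates, and that $\widetilde{x_0}$ itself is one of these points (take $g = \id$, giving the zero column). The column span of $A$ is therefore $\spano(\set{\widetilde{g \cdot x_0} - \widetilde{x_0}}{g \in \RR})$, which by the characterization of affine hull recalled just before Lemma~\ref{Lemma:configsubsetNEW} is exactly the direction space $V$ of $\aff(\RR \cdot x_0)$, projected to the last $\daff$ coordinates (the projection being an isometry on $\{0_{d-\daff}\}\times\R^{\daff}$). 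By Property~$\propone$, $\aff(\RR \cdot x_0) = \aff(\G \cdot x_0)$, so this direction space has dimension $\dim(\G \cdot x_0) = \daff$. Hence the columns of $A$ span a $\daff$-dimensional subspace of $\R^{\daff}$, i.e., all of $\R^{\daff}$, so $\rank A = \daff$.

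There is essentially no hard step here — the statement is a bookkeeping consequence of the normalization $\G \cdot x_0 \subset \{0_{d-\daff}\}\times\R^{\daff}$ together with the definition of Property~$\propone$. The only point requiring a modicum of care is the identification of the column space of $A$ with the direction space of the affine hull: one must observe that, because $\id \in \RR$, the point $x_0 = \id \cdot x_0$ lies in $\RR \cdot x_0$, so that $\spano(\set{g \cdot x_0 - x_0}{g \in \RR})$ is genuinely the translation space of $\aff(\RR \cdot x_0)$ rather than something smaller. After that, comparing dimensions via Property~$\propone$ closes the argument.
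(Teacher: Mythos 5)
Your argument is correct and follows essentially the same route as the paper: read off the zero block from the normalization $\G\cdot x_0\subset\{0_{d-\daff}\}\times\R^{\daff}$, then identify $\dim(\spano(\set{g\cdot x_0-x_0}{g\in\RR}))$ with $\dim(\aff(\RR\cdot x_0))=\dim(\aff(\G\cdot x_0))=\daff$ via Property~1. Your extra remark that $\id\in\RR$ is what guarantees $x_0\in\RR\cdot x_0$ (so the span really is the direction space of the affine hull) is a detail the paper leaves implicit, but the proofs coincide.
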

\begin{proof}
Since $\G\cdot x_0\subset\{0_{d-\daff}\}\times\R^\daff$, there exists some $A\in\R^{\daff\times\abs\RR}$ such that
\[(g\cdot x_0-x_0)_{g\in\RR}=\parens[\bigg]{\begin{matrix}0\\ A\end{matrix}}.\]
It holds
\[\dim(\spano(\set{g\cdot x_0-x_0}{g\in\RR}))=\dim(\aff(\RR\cdot x_0))=\dim(\aff(\G\cdot x_0))=\daff\]
and thus, $\rank(A)=\daff$.
\end{proof}
Below we will estimate $\norm\fdot_{\RR}$ by summing over local contributions. 
To this end, we introduce two auxiliary seminorms that will be needed only in Lemma~\ref{Lemma:hRSIequivalence} and the proof of Theorem~\ref{Theorem:equivalence}.
\begin{Definition}\label{Definition:semiq}
For all finite sets $\RR\subset\G$ such that $\RR\G_{x_0}=\RR$ we define the seminorm
\begin{align*}
\semip{\RR}\colon&\{u\colon\RR\to\R^d\}\to[0,\infty),\qquad
u\mapsto\norm{\pi_{\Uiso\RR}(u)}
\end{align*}
on $(\R^{d})^{\RR}$ whose kernel is $\Uiso\RR$ (see Definition~\ref{Definition:UtransUrot}).
Moreover, for all finite sets $\RR_1,\RR_2\subset\G$ such that $\RR_2\G_{x_0}=\RR_2$ we define the seminorm
\begin{align*}
\semiq{\RR_1}{\RR_2}\colon\{u\colon\RR_1\RR_2\to\R^d\}\to[0,\infty),\qquad
u\mapsto \parens[\bigg]{\sum_{g\in\RR_1}\semip[2]{\RR_2}\parens[\big]{u(g\fdot)|_{\RR_2}}}^{\frac12}
\end{align*}
on $(\R^{d})^{\RR_1\RR_2}$. 
\end{Definition}
We remark that $\semiq{\RR_1}{\RR_2}$ itself is defined by summing the local contributions $\semip[2]{\RR_2}\parens[\big]{u(g\fdot)|_{\RR_2}}$ over $g\in\RR_1$. 
\begin{Lemma}\label{Lemma:hRSIequivalence}
Suppose that $\RR_1\subset\G$ is finite and $\RR_2\subset\G$ is an admissible neighborhood range of $\id$.
Then there exists a finite set $\RR_3\subset\G$ such that $\RR_1\subset \RR_3\RR_2$ and the seminorms $\semip{\RR_3\RR_2}$ and $\semiq{\RR_3}{\RR_2}$ are equivalent.
\end{Lemma}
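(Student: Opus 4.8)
The goal is to find a finite set $\RR_3 \subset \G$ with $\RR_1 \subset \RR_3\RR_2$ and such that the two seminorms $\semip{\RR_3\RR_2}$ and $\semiq{\RR_3}{\RR_2}$, both living on $(\R^d)^{\RR_3\RR_2}$, are equivalent. First I would construct $\RR_3$. Since $\RR_2$ has Property~$\proptwo$, by definition there are $\RR_2', \RR_2'' \subset \G$ with $\id \in \RR_2'$, $\RR_2'$ generating $\G$, $\RR_2''$ having Property~$\propone$, and $\RR_2'\RR_2'' \subset \RR_2$; in particular $\id \in \RR_2$ (as $\id = \id\cdot\id \in \RR_2'\RR_2''$, using that $\id$ lies in the Property~$\propone$ set $\RR_2''$). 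Because $\RR_2'$ generates $\G$ and contains $\id$, every element of the finite set $\RR_1$ can be written as a product of at most $k$ elements of $\RR_2'$ for some uniform $k$, so $\RR_1 \subset (\RR_2')^k$. Setting $\RR_3 := (\RR_2')^k$ (a finite set containing $\id$) we get $\RR_1 \subset \RR_3 \subset \RR_3\{\id\} \subset \RR_3\RR_2$ since $\id \in \RR_2$. That settles the inclusion; the substance is the seminorm equivalence.

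One inequality is essentially trivial. The seminorm $\semip{\RR_3\RR_2}(u) = \norm{\pi_{\Uiso{\RR_3\RR_2}}(u)}$ measures distance of $u$ to $\Uiso{\RR_3\RR_2}$, while $\semiq{\RR_3}{\RR_2}(u)^2 = \sum_{g \in \RR_3} \semip[2]{\RR_2}(u(g\fdot)|_{\RR_2})$ measures, for each $g \in \RR_3$, the distance of the restricted patch $u(g\fdot)|_{\RR_2}$ to $\Uiso{\RR_2}$. Restriction of a global isometric field to a translated patch is again an isometric field (with the same matrix $S$ and appropriately transported translation vector), so if $u \in \Uiso{\RR_3\RR_2}$ then every patch $u(g\fdot)|_{\RR_2} \in \Uiso{\RR_2}$; hence by a direct projection estimate $\semiq{\RR_3}{\RR_2} \le C\,\semip{\RR_3\RR_2}$ with $C$ depending only on $|\RR_3|$ and the operator norms of the relevant $\rot(h)$. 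This is the easy direction and I would dispatch it with a short computation comparing the two projections.

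The hard direction is $\semip{\RR_3\RR_2} \le C\,\semiq{\RR_3}{\RR_2}$, i.e.\ controlling the global deviation by the sum of local patch deviations. The strategy: suppose $\semiq{\RR_3}{\RR_2}(u)$ is small; then for each $g \in \RR_3$ there is an isometric field on $\RR_2$ — encoded by a pair $(b_g, S_g) \in \R^d \times \Skew(d)$ — that is close to $u(g\fdot)|_{\RR_2}$. I must show these local pieces can be glued into a single global $(b,S)$ close to $u$ on all of $\RR_3\RR_2$. The key mechanism is overlap: since $\RR_2 \supset \RR_2'\RR_2''$ with $\RR_2''$ having Property~$\propone$, the patch over any $g \in \RR_3$ contains a copy of $\RR_2'\RR_2''$; because $\RR_2''$ has Property~$\propone$, Lemma~\ref{Lemma:MatrixRank} gives that the vectors $(h\cdot x_0 - x_0)_{h \in \RR_2''}$ span the $\daff$-dimensional affine part, which pins down the $S$-component (restricted to the relevant block) of the local isometry from the data on that patch. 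Moreover, because $\RR_2'$ contains generators and $\id$, neighboring group elements $g, g' \in \RR_3$ have patches $g\RR_2$ and $g'\RR_2$ that overlap substantially (they share at least the point $\id$-translate, and more), forcing $S_g = S_{g'}$ and $b_g, b_{g'}$ to be compatible up to the controlled error. Propagating this comparison along a spanning set of $\RR_3$ (finitely many steps, with multiplicative constants depending only on $|\RR_3|$ and the geometry) yields a single global $(b,S)$ whose associated field in $\Uiso{\RR_3\RR_2}$ is within $C\,\semiq{\RR_3}{\RR_2}(u)$ of $u$.

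The main obstacle — and the place requiring care — is exactly this gluing/propagation argument: ensuring that the overlaps between consecutive patches are genuinely large enough (which is where the precise structure of Property~$\proptwo$, namely that $\RR_2$ contains $\RR_2'\RR_2''$ with $\RR_2'$ a generating set through $\id$ and $\RR_2''$ affinely full, is used) to force agreement of the skew-symmetric parts $S_g$ across patches with only a constant-factor loss, rather than an error that accumulates with the combinatorial diameter. I would organize this as: (i) on each single patch, a quantitative lemma extracting a near-optimal $(b_g, S_g)$ with $\norm{u(g\fdot)|_{\RR_2} - (b_g + S_g(\,\cdot\, x_0 - x_0))} \le C\,\semip{\RR_2}(u(g\fdot)|_{\RR_2})$, and a stability statement that $(b_g,S_g)$ is uniquely determined up to an error controlled by the same quantity (using the rank-$\daff$ property); (ii) a pairwise-overlap lemma showing $\norm{S_g - S_{g'}} + \norm{b_g - b_{g'}}$ is bounded by a constant times the sum of the two patch seminorms whenever $g, g'$ differ by a generator; (iii) a finite chaining over $\RR_3$ to get a common $(b,S)$; (iv) a final triangle-inequality assembly. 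Steps (i) and (ii) are the technical heart; (iii)–(iv) are bookkeeping with constants depending only on $\RR_3$, $\RR_2$, and $x_0$.
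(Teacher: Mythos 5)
Your overall architecture (construct $\RR_3$ from words in the generating set, easy direction by restriction, hard direction by extracting local pairs $(b_g,S_g)$ per patch and propagating them along overlaps that contain an affinely full copy of $\RR_2''$) matches the skeleton of the paper's argument, but you take a genuinely harder quantitative route where a soft argument suffices. The paper's first move is to observe that $\semip{\RR_3\RR_2}$ and $\semiq{\RR_3}{\RR_2}$ are seminorms on the \emph{finite-dimensional} space $(\R^d)^{\RR_3\RR_2}$, so their equivalence follows once one shows $\ker(\semiq{\RR_3}{\RR_2})=\Uiso{\RR_3\RR_2}$; this reduces the entire problem to the exact case $\semiq{\RR_3}{\RR_2}(u)=0$, where each patch carries an \emph{exact} pair $(a(g),S(g))$ and the overlap/rank argument becomes a clean induction on word length with no error terms, no stability estimates, and no chaining constants. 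Your steps (i)--(iii) --- the near-optimal extraction with a stability bound, the pairwise-overlap comparison with controlled error, and the chaining --- are precisely the parts you label ``the technical heart'' and leave unproved; they are plausible but would need the quantitative injectivity of the parametrization $(b,S)\mapsto(\rot(h)^T(b+S(h\cdot x_0-x_0)))_h$ restricted to the determined blocks of $S$ (cf.\ Lemma~\ref{Lemma:MatrixRank} and Proposition~\ref{Proposition:UtransUrotbig}), which you assert but do not establish. As written, the proposal is therefore an outline of a viable but unnecessarily heavy proof with its core estimates missing, whereas the missing idea relative to the paper is simply the finite-dimensionality reduction to equality of kernels.

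Two smaller points. First, your construction $\RR_3=(\RR_2')^k$ is problematic: in the paper's notation $S^n$ denotes the set of $n$-th powers, and, more substantively, a generating set of a group only generates via words in $\RR_2'\cup(\RR_2')^{-1}$; without inverses you cannot in general reach all of $\RR_1$ (take $\G\cong\Z$ generated by a single translation). The paper's choice $\RR_3=\{\id\}\cup\bigcup_{k=1}^{n_0}\set{g_1\cdots g_k}{g_i\in\RR_2'\cup(\RR_2')^{-1}}$ fixes this, and the inductive propagation must then treat the two cases $r\in\RR_2'$ and $r^{-1}\in\RR_2'$ separately, since the usable overlap is $r\RR_2''\subset\RR_2$ in one case and $r^{-1}\RR_2''\subset\RR_2$ in the other. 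Second, in your overlap step the phrase ``they share at least the point $\id$-translate'' is not the operative fact --- a single shared point never pins down $S$; what matters is exactly that both patches contain a common translate of the affinely full set $\RR_2''$, which you do mention but should make the centerpiece of step (ii).
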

This lemma is crucial: Any admissible neighborhood range $\RR_2$ can be modified to a set $\RR_3\RR_2$ which is rich enough to cover $\RR_1$ and such that $\semip{\RR_3\RR_2}$ is still controlled by $\semiq{\RR_3}{\RR_2}$ and hence ultimately in terms of local contributions with respect to the original $\semip{\RR_2}$. 
\begin{proof}
Since $(\R^{d})^{\RR_3\RR_2}$ is finite dimensional, it suffices to show that there exists a finite set $\RR_3\subset\G$ with $\RR_1\subset\RR_3\RR_2$ and
\[\ker(\semiq{\RR_3}{\RR_2})=\Uiso{\RR_3\RR_2}.\]

First we show that $\Uiso{\RR_3\RR_2}\subset\ker(\semiq{\RR_3}{\RR_2})$ for all finite sets $\RR_3\subset \G$ with $\RR_1\subset\RR_3\RR_2$:
Let $u\in\Uiso{\RR_3\RR_2}$.
As there are $a\in\R^d$ and $S\in\Skew(d)$ such that for all $h\in\RR_2$ and $g\in\RR_3$ 
\begin{align*}
\rot(h)\proj{u}(gh)&=\rot(g)^Ta+\rot(g)^TS((gh)\cdot x_0-x_0)\\
&=\rot(g)^Ta+\rot(g)^TS(g\cdot x_0-x_0)+\rot(g)^TS\rot(g)(h\cdot x_0-x_0), 
\end{align*}
we see that $u(g\fdot)|_{\RR_2}\in\Uiso{\RR_2}$ for every $g\in\RR_3$. 
Since $\semip{\RR_2}$ vanishes on $\Uiso{\RR_2}$, it follows
\begin{align*}
\semiq[2]{\RR_3}{\RR_2}(u)
=\sum_{g\in\RR_3}\semip[2]{\RR_2}\parens{u(g\fdot)|_{\RR_2}}
=0.
\end{align*}
Hence, we have $\Uiso{\RR_3\RR_2}\subset\ker(\semiq{\RR_3}{\RR_2})$.

Now we show that there exists some finite set $\RR_3\subset\G$ such that $\ker(\semiq{\RR_3}{\RR_2})\subset\Uiso{\RR_3\RR_2}$.
By admissibility of $\RR_2$ there exist finite sets $\RR_2',\RR_2''\subset \G$ such that $\id\in\RR_2'\cap\RR_2''$, $\RR_2'$ generates $\G$, $\RR_2''$ is such that $\aff(\RR_2''\cdot x_0)=\aff(\G\cdot x_0)$ and 
\[\RR_2'\RR_2''\subset\RR_2.\]
Without loss of generality we may assume that $\RR_2''\G_{x_0}=\RR_2''$.
Since $\RR_2'$ generates $\G$, there exists some $n_0\in\N$ such that
\[\RR_1\subset\{\id\}\cup\bigcup_{k=1}^{n_0}\set[\Big]{g_1\dots g_k}{g_1,\dots,g_k\in\RR_2'\cup(\RR_2')^{-1}}.\]
Let
\[\RR_3=\{\id\}\cup\bigcup_{k=1}^{n_0}\set[\Big]{g_1\dots g_k}{g_1,\dots,g_k\in\RR_2'\cup (\RR_2')^{-1}}.\]
Let $u\in\ker(\semiq{\RR_3}{\RR_2})$.
By Definition~\ref{Definition:semiq} and Definition~\ref{Definition:UtransUrot} for all $g\in\RR_3$ there exist some $a(g)\in\R^d$ and $S(g)\in\Skew(d)$ such that
\begin{equation}\label{eq:helpRSI1}
\rot(h)\proj{u}(gh)=a(g)+S(g)(h\gdot x_0-x_0)\qquad\text{for all }h\in\RR_2.
\end{equation}
Since $\G\cdot x_0\subset\{0_{d-\daff}\}\times\R^\daff$, we have $h\gdot x_0-x_0\in\{0_{d-\daff}\}\times\R^{\daff}$ for all $h\in\RR_2$.
Hence, for all $g\in\RR_3$ we may assume
\[S(g)=\parens[\bigg]{\begin{matrix}0 & S_1(g)\\-S_1(g)^T & S_2(g)\end{matrix}}\]
for some $S_1(g)\in\R^{(d-\daff)\times\daff}$ and $S_2(g)\in\Skew(\daff)$.
We prove inductively that for $n=0,1,\dots,n_0$ for all $g\in\{\id\}\cup\bigcup_{k=1}^{n}\set[\Big]{g_1\dots g_k}{g_1,\dots,g_k\in\RR_2'\cup(\RR_2')^{-1}}$ it holds
\begin{equation}\label{eq:helpRSI2}
\rot(g)a(g)=a(\id)+S(\id)(g\cdot x_0-x_0)\quad\text{and}\quad S(g)=\rot(g)^TS(\id)\rot(g).
\end{equation}
For $n=0$ the induction hypothesis is true.

We assume the induction hypothesis holds for arbitrary but fixed $0\le n<n_0$.
Let $g\in\{\id\}\cup\bigcup_{k=1}^n\set[\Big]{g_1\dots g_k}{g_1,\dots,g_k\in\RR_2'\cup(\RR_2')^{-1}}$ and $r\in \RR_2'\cup(\RR_2')^{-1}$.
\begin{case}[Case 1: $r\in \RR_2'$] \ \\
Since $g\in\RR_3$ and $r\RR_2''\subset\RR_2$, by \eqref{eq:helpRSI1} we have
\begin{equation}\label{eq:AAhelpRSIi}
\rot(rh)\proj{u}(grh)=a(g)+S(g)((rh)\cdot x_0-x_0)\qquad\text{for all }h\in\RR_2''.
\end{equation}
Since $gr\in\RR_3$ and $\RR_2''\subset\RR_2$, by \eqref{eq:helpRSI1} we have
\begin{equation}\label{eq:AAhelpRSIii}
\rot(h)\proj{u}(grh)=a(gr)+S(gr)(h\cdot x_0-x_0)\qquad\text{for all }h\in\RR_2''.
\end{equation}
By \eqref{eq:AAhelpRSIi} and \eqref{eq:AAhelpRSIii} we have
\begin{equation}\label{eq:AA1}
\rot(r)a(gr)+\rot(r)S(gr)(h\cdot x_0-x_0)=a(g)+S(g)((rh)\cdot x_0-x_0)
\end{equation}
for all $h\in\RR_2''$.
Since $\id\in\RR_2''$, by \eqref{eq:AA1} we have
\begin{equation}\label{eq:AA2}
\rot(r)a(gr)=a(g)+S(g)(r\cdot x_0-x_0)
\end{equation}
and with the induction hypothesis follows
\begin{align*}
\rot(gr)a(gr)&=a(\id)+S(\id)(g\cdot x_0-x_0)+S(\id)\rot(g)(r\cdot x_0-x_0)\\
&=a(\id)+S(\id)((gr)\cdot x_0-x_0).
\end{align*}
By \eqref{eq:AA1} and \eqref{eq:AA2} we have
\begin{align}\label{eq:AA4}
\rot(r)S(gr)(h\cdot x_0-x_0)&=S(g)((rh)\cdot x_0-r\cdot x_0)\nonumber\\
&=S(g)\rot(r)(h\cdot x_0-x_0)
\end{align}
for all $h\in\RR_2''$.
By Lemma~\ref{Lemma:MatrixRank} there exists some $A\in\R^{\daff\times\abs{\RR_2''}}$ of rank $\daff$ such that
\[(h\cdot x_0-x_0)_{h\in\RR_2''}=\parens[\bigg]{\begin{matrix}0_{\daff,\abs{\RR_2''}}\\ A\end{matrix}}.\]
By \eqref{eq:AA4} and the induction hypothesis we have
\begin{equation}\label{eq:AA5}
(S(gr)-\rot(gr)^TS(\id)\rot(gr))\parens[\bigg]{\begin{matrix}0\\ A\end{matrix}}=0.
\end{equation}
By Lemma~\ref{Lemma:Shelp2} there exist some $B_{gr}\in\O(d-\daff)$ and $C_{gr}\in\O(\daff)$ such that $\rot(gr)=B_{gr}\oplus C_{gr}$.
Equation~\eqref{eq:AA5} is equivalent to
\[\parens[\bigg]{\begin{matrix}(S_1(gr)-B_{gr}^TS_1(\id)C_{gr})A\\(S_2(gr)-C_{gr}^TS_2(\id)C_{gr})A\end{matrix}}=0.\]
Since the rank of $A$ is equal to the number of its rows, we have $S_1(gr)=B_{gr}^TS_1(\id)C_{gr}$ and $S_2(gr)=C_{gr}^TS_2(\id)C_{gr}$ which is equivalent to $S(gr)=\rot(gr)^TS(\id)\rot(gr)$.
\end{case}
\begin{case}[Case 2: $r^{-1}\in \RR_2'$] \ \\
Since $g\in\RR_3$ and $\RR_2''\subset\RR_2$, by \eqref{eq:helpRSI1} we have
\begin{equation}\label{eq:BBhelpRSIi}
\rot(h)\proj{u}(gh)=a(g)+S(g)(h\cdot x_0-x_0)\qquad\text{for all }h\in\RR_2''.
\end{equation}
Since $gr\in\RR_3$ and $r^{-1}\RR_2''\subset\RR_2$, by \eqref{eq:helpRSI1} we have
\begin{equation}\label{eq:BBhelpRSIii}
\rot(r^{-1}h)\proj{u}(gh)=a(gr)+S(gr)((r^{-1}h)\cdot x_0-x_0)\qquad\text{for all }h\in\RR_2''.
\end{equation}
By \eqref{eq:BBhelpRSIi} and \eqref{eq:BBhelpRSIii} we have
\begin{equation}\label{eq:BB1}
a(gr)+S(gr)((r^{-1}h)\cdot x_0-x_0)=\rot(r)^Ta(g)+\rot(r)^TS(g)(h\cdot x_0-x_0)
\end{equation}
for all $h\in\RR_2''$.
Since $\id\in\RR_2''$, by \eqref{eq:BB1} we have
\begin{equation}\label{eq:BB2}
a(gr)+S(gr)(r^{-1}\cdot x_0-x_0)=\rot(r)^Ta(g).
\end{equation}
By \eqref{eq:BB1} and \eqref{eq:BB2} we have
\[S(gr)((r^{-1}h)\cdot x_0-x_0)=S(gr)(r^{-1}\cdot x_0-x_0)+\rot(r)^TS(g)(h\cdot x_0-x_0)\]
for all $h\in\RR_2''$.
This is equivalent to
\begin{equation}\label{eq:BB4}
S(gr)\rot(r)^T(h\cdot x_0-x_0)=\rot(r)^TS(g)(h\cdot x_0-x_0)
\end{equation}
for all $h\in\RR_2''$.
By Lemma~\ref{Lemma:MatrixRank} there exists some $A\in\R^{\daff\times\abs{\RR_2''}}$ of rank $\daff$ such that
\[(h\cdot x_0-x_0)_{h\in\RR_2''}=\parens[\bigg]{\begin{matrix}0_{\daff,\abs{\RR_2''}}\\ A\end{matrix}}.\]
By \eqref{eq:BB4} and the induction hypothesis we have
\begin{equation}\label{eq:BB5}
(S(gr)-\rot(gr)^TS(\id)\rot(gr))\rot(r)^T\parens[\bigg]{\begin{matrix}0\\ A\end{matrix}}=0.
\end{equation}
By Lemma~\ref{Lemma:Shelp2} there exist $B_r,B_{gr}\in\O(d-\daff)$ and $C_r,C_{gr}\in\O(\daff)$ such that $\rot(r)=B_r\oplus C_r$ and $\rot(gr)=B_{gr}\oplus C_{gr}$.
Equation~\eqref{eq:BB5} is equivalent to
\[\parens[\bigg]{\begin{matrix}(S_1(gr)-B_{gr}^TS_1(\id)C_{gr})C_r^TA\\(S_2(gr)-C_{gr}^TS_2(\id)C_{gr})C_r^TA\end{matrix}}=0.\]
Since $C_r$ is invertible and the rank of $A$ is equal to the number of its rows, we have $S_1(gr)=B_{gr}^TS_1(\id)C_{gr}$ and $S_2(gr)=C_{gr}^TS_2(\id)C_{gr}$ which is equivalent to $S(gr)=\rot(gr)^TS(\id)\rot(gr)$.
Since $S(gr)=\rot(gr)^TS(\id)\rot(gr)$, we have by \eqref{eq:BB2} and the induction hypothesis that
\begin{align*}
\rot(gr)a(gr)&=\rot(g)a(g)-\rot(gr)S(gr)(r^{-1}\cdot x_0-x_0)\\
&=a(\id)+S(\id)(g\cdot x_0-x_0)-S(\id)\rot(gr)(r^{-1}\cdot x_0-x_0)\\
&=a(\id)+S(\id)((gr)\cdot x_0-x_0).
\end{align*}
\end{case}
By \eqref{eq:helpRSI1} and \eqref{eq:helpRSI2} we have that
\[\rot(g)u(g)=\rot(g)a(g)=a(\id)+S(\id)(g\gdot x_0-x_0)\quad\text{for all }g\in\RR_3\RR_2\]
and thus, $u\in\Uiso{\RR_3\RR_2}$.
\end{proof}
\begin{Theorem}\label{Theorem:equivalence}
Suppose that $\RR_1,\RR_2\subset\G$ are admissible neighborhood ranges of $\id$.
Then the two seminorms $\norm\fdot_{\RR_1}$ and $\norm\fdot_{\RR_2}$ are equivalent.
\end{Theorem}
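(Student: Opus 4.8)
The plan is to deduce the two-sided equivalence from the single estimate $\norm u_{\RR_1}\le C\norm u_{\RR_2}$ for all $u\in\UPer$, the reverse inequality following by symmetry since both $\RR_1$ and $\RR_2$ have Property~$\proptwo$. This estimate will be assembled from three ingredients: a monotonicity of $\norm\fdot_\RR$ in the interaction range, the seminorm equivalence $\semip{\RR_3\RR_2}\sim\semiq{\RR_3}{\RR_2}$ supplied by Lemma~\ref{Lemma:hRSIequivalence}, and a re-indexing of a double sum that rests on the normality of $\T^N$ in $\G$.

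\textbf{Monotonicity.} First I would record that for finite sets $\RR\subset\RR'\subset\G$ one has $\norm u_\RR\le\norm u_{\RR'}$ for every $u\in\UPer$. Indeed, for $w\colon\RR'\to\R^d$ and any $\tilde w\in\Uiso{\RR'}$ the restriction $\tilde w|_\RR$ lies in $\Uiso\RR$ (the parameters $a\in\R^d$ and $S\in\Skew(d)$ realizing membership still work on the smaller set) and $\norm{w|_\RR-\tilde w|_\RR}\le\norm{w-\tilde w}$; taking the infimum over $\tilde w$ gives $\semip\RR(w|_\RR)\le\semip{\RR'}(w)$. Applying this to $w=u(g\fdot)|_{\RR'}$ for each $g\in\CC_N$ and summing the squares yields the claim. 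In particular, once $\RR_1\subset\RR_3\RR_2$ is known we get $\norm u_{\RR_1}\le\norm u_{\RR_3\RR_2}$.

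\textbf{The main estimate.} Fix $u\in\UPer$ and pick $N\in\MM$ with $u$ being $\T^N$-periodic; then $\T^N\triangleleft\G$. By Lemma~\ref{Lemma:hRSIequivalence}, applied to the finite set $\RR_1$ and to $\RR_2$ (which has Property~$\proptwo$), there are a finite set $\RR_3\subset\G$ with $\RR_1\subset\RR_3\RR_2$ and a constant $c>0$, independent of $u$, such that $\semip{\RR_3\RR_2}\le c\,\semiq{\RR_3}{\RR_2}$ on $(\R^d)^{\RR_3\RR_2}$. Unfolding the definition of $\semiq{\RR_3}{\RR_2}$ and using the identity $\bigl(u(g\fdot)|_{\RR_3\RR_2}\bigr)(h\fdot)|_{\RR_2}=u\bigl((gh)\fdot\bigr)|_{\RR_2}$ (valid because $h\RR_2\subset\RR_3\RR_2$ for $h\in\RR_3$), and abbreviating $\psi(x):=\semip[2]{\RR_2}\bigl(u(x\fdot)|_{\RR_2}\bigr)$ for $x\in\G$, one obtains
\[ \norm u_{\RR_3\RR_2}^2=\frac1{\abs{\CC_N}}\sum_{g\in\CC_N}\semip[2]{\RR_3\RR_2}\bigl(u(g\fdot)|_{\RR_3\RR_2}\bigr)\le\frac{c^2}{\abs{\CC_N}}\sum_{g\in\CC_N}\sum_{h\in\RR_3}\psi(gh). \]
Here normality enters: for $t\in\T^N$ and $r\in\RR_2$ one has $tr=r(r^{-1}tr)$ with $r^{-1}tr\in\T^N$, so $\T^N$-periodicity of $u$ gives $u((xt)r)=u(xr)$, hence $u((xt)\fdot)|_{\RR_2}=u(x\fdot)|_{\RR_2}$ and $\psi$ is constant on the cosets $x\T^N$. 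Moreover, $\T^N$ being normal, right multiplication by any $h\in\G$ permutes $\G/\T^N$, so as $g$ runs over $\CC_N$ the element $gh$ meets every coset of $\T^N$ exactly once; since $\psi$ is coset-constant this gives $\sum_{g\in\CC_N}\psi(gh)=\sum_{g\in\CC_N}\psi(g)=\abs{\CC_N}\,\norm u_{\RR_2}^2$ for each $h\in\RR_3$. Substituting back, $\norm u_{\RR_3\RR_2}^2\le c^2\abs{\RR_3}\,\norm u_{\RR_2}^2$, and combining with monotonicity, $\norm u_{\RR_1}\le c\sqrt{\abs{\RR_3}}\,\norm u_{\RR_2}$. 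Interchanging the roles of $\RR_1$ and $\RR_2$ yields the reverse bound, so the two seminorms are equivalent.

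\textbf{Main obstacle.} The genuinely structural step is the re-indexing $\sum_{g\in\CC_N}\psi(gh)=\sum_{g\in\CC_N}\psi(g)$: it requires that $\T^N$ act "from the right through the range $\RR_2$" on $u$ in the same way it acts from the left, which is exactly the normality of $\T^N$ in $\G$ — available precisely for $N\in\MM=m_0\N$. Everything else is bookkeeping: the monotonicity, the pointwise identity for $u(g\fdot)|_{\RR_3\RR_2}$, and the uniformity in $u$ of the constant from Lemma~\ref{Lemma:hRSIequivalence} (which holds since it is a statement about seminorms on the finite-dimensional space $(\R^d)^{\RR_3\RR_2}$).
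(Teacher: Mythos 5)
Your proposal is correct and follows essentially the same route as the paper: monotonicity of $\norm\fdot_\RR$ in the range, Lemma~\ref{Lemma:hRSIequivalence} to produce $\RR_3$ with $\RR_1\subset\RR_3\RR_2$ and $\semip{\RR_3\RR_2}\le C\semiq{\RR_3}{\RR_2}$, and the re-indexing of the double sum, which the paper phrases as ``$\CC_N\tilde g$ is a representation set of $\G/\T^N$'' and you justify via normality of $\T^N$ exactly as intended. Your write-up is in fact slightly more explicit than the paper's on the two points it leaves implicit (the monotonicity and the coset-constancy of $g\mapsto\semip[2]{\RR_2}(u(g\fdot)|_{\RR_2})$).
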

\begin{proof}
It is sufficient to show that there exists a constant $C>0$ such that $\norm\fdot_{\RR_1}\le C\norm\fdot_{\RR_2}$.
Since $\RR_1$ is finite, by Lemma~\ref{Lemma:hRSIequivalence} there exists a finite set $\RR_3\subset\G$ such that $\RR_1\subset \RR_3\RR_2$ and some $C>0$ with $\semip{\RR_3\RR_2}\le C\semiq{\RR_3}{\RR_2}$.
Let $u\in\UPer$.
There exists some $N\in \MM$ such that $u$ is $\T^N$-periodic.
We have
\begin{align*}
\norm u_{\RR_1}^2&\le\norm u_{\RR_3\RR_2}^2\\
&=\frac1{\abs{\CC_N}}\sum_{g\in\CC_N}\semip[2]{\RR_3\RR_2}(u(g\fdot)|_{\RR_3\RR_2})\\
&\le\frac {C^2}{\abs{\CC_N}}\sum_{g\in\CC_N}\semiq[2]{\RR_3}{\RR_2}(u(g\fdot)|_{\RR_3\RR_2})\\
&= \frac {C^2}{\abs{\CC_N}}\sum_{g\in\CC_N}\sum_{\tilde g\in\RR_3}\semip[2]{\RR_2}\parens[\big]{u(g\tilde g\fdot)|_{\RR_2}}\\
&=\frac {C^2}{\abs{\CC_N}}\sum_{\tilde g\in\RR_3}\sum_{g\in\CC_N\tilde g}\semip[2]{\RR_2}\parens[\big]{u(g\fdot)|_{\RR_2}}\\
&=C^2\abs{\RR_3}\norm u_{\RR_2}^2,
\end{align*}
where we used that $\CC_N\tilde g$ is a representation set of $\G/\T^N$ for all $\tilde g\in\RR_3$ in the last step.
Hence, we have $\norm\fdot_{\RR_1}\le C\abs{\RR_3}^{\frac 12}\norm\fdot_{\RR_2}$.
\end{proof}
\begin{Remark}
In Theorem~\ref{Theorem:equivalence} the premise that $\RR_1$ and $\RR_2$ are admissible neighborhood ranges of $\id$ cannot be weakened to the premise that for both $i = 1$ and $i=2$ one has $\RR_i\G_{x_0}=\RR_i$, $\aff(\RR_i\cdot x_0)=\aff(\G\cdot x_0)$ and $\RR_i$ is a generating set of $\G$, see Example~\ref{Example:propertystarstar}.
\end{Remark}
%
\subsection{Intrinsic seminorms and their equivalence to local seminorms}
\label{subsection:Korn}
%
We now define the seminorm $\zeronorm{\fdot}{\RR}$ which measures the local distance of a deformation to the subset of those isometries that vanish if both the preimage and target space are projected to $\R^{d_2}$. Thus $\zeronorm{u}{\RR}$ controls the size of the corresponding part of the discrete gradient of the displacement $u$ globally. 
\begin{Definition}
For all $\RR\subset\G$ such that $\RR\G_{x_0}=\RR$ we define the vector spaces
\begin{align*}
&\begin{aligned}\zeroUrot{\RR}&:=\set[\bigg]{u\colon\RR\to\R^d}{\exists\+ S\in\Skew_{0,d_2}(d)\;\forall g\in\RR:\rot(g)\proj{u}(g)=S(g\gdot x_0-x_0)}\\
&\subset\Urot\RR\end{aligned}
\shortintertext{and}
&\zeroUiso{\RR}:=\Utrans{\RR}+\zeroUrot{\RR}\subset\Uiso\RR,
\end{align*}
where
\[\Skew_{0,d_2}(d):=\set[\bigg]{\parens[\bigg]{\begin{matrix}S_1&S_2\\-S_2^T&0\end{matrix}}}{S_1\in\Skew(d_1),S_2\in\R^{d_1\times d_2}}\subset \Skew(d).\]
\end{Definition}
\begin{Definition}\label{Definition:Seminorm-Null}
For all finite sets $\RR\subset\G$ such that $\RR\G_{x_0}=\RR$ we define the seminorms
\begin{align*}
\zeronorm{\fdot}{\RR}\colon&\UPer\to[0,\infty)\\
&u\mapsto\parens[\Big]{\frac1{\abs{\CC_N}}\sum_{g\in\CC_N}\norm{\pi_{\zeroUiso\RR}(u(g\fdot)|_\RR)}^2}^{\frac12}\quad\text{if $u$ is $\T^N$-periodic,}\\
\shortintertext{and}
\nablazeronorm\fdot\RR\colon&\UPer\to[0,\infty)\\
&u\mapsto\parens[\Big]{\frac1{\abs{\CC_N}}\sum_{g\in\CC_N}\norm{\pi_{\zeroUrot\RR}(\nabla_\RR u(g))}^2}^{\frac12}\quad\text{if $u$ is $\T^N$-periodic,}
\end{align*}
where $\pi_{\zeroUiso\RR}$ and $\pi_{\zeroUrot\RR}$ are the orthogonal projections on $\{u\colon\RR\to\R^d\}$ with respect to the norm $\norm\fdot$ with kernels $\zeroUiso\RR$ and $\zeroUrot\RR$, respectively.
\end{Definition}
\begin{Remark}
We have $\nablazeronorm\fdot\RR=\nablazeronorm\fdot{\RR\setminus\G_{x_0}}$ for all finite sets $\RR\subset\G$.
\end{Remark}
\begin{Proposition}\label{Proposition:nablanormequivalent-0}
Let $\RR\subset\G$ be finite and $\id\in\RR$.
Then the seminorms $\zeronorm\fdot\RR$ and $\nablazeronorm\fdot\RR$ are equivalent.
\end{Proposition}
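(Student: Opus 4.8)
The plan is to mimic exactly the proof of Proposition~\ref{Proposition:nablanormequivalent-ohne-0}, replacing the role of $\Uiso\RR$ by $\zeroUiso\RR$ and the role of $\Urot\RR$ by $\zeroUrot\RR$. The point is that all the algebraic manipulations in that earlier proof only used that $\Uiso\RR = \Utrans\RR + \Urot\RR$ is a sum of the translation part and a ``rotation'' part, and that the associated discrete-derivative seminorm arises by restricting to $\RR' = \RR\setminus\{\id\}$; the fact that $\zeroUiso\RR = \Utrans\RR + \zeroUrot\RR$ has precisely the same shape, with the smaller skew-symmetric space $\Skew_{0,d_2}(d)$ in place of $\Skew(d)$, means nothing in the argument changes.

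Concretely, first I would fix $u\in\UPer$ with $u$ being $\T^N$-periodic for some $N\in\MM$, and without loss of generality assume $\RR\neq\{\id\}$, writing $\RR' = \RR\setminus\{\id\}$. For the easy inequality $\nablazeronorm\fdot\RR \ge \zeronorm\fdot\RR$, I would note that for each $g\in\CC_N$ one has $\norm{\pi_{\zeroUrot\RR}(\nabla_\RR u(g))} \ge \norm{\pi_{\zeroUiso\RR}(u(g\fdot)|_\RR)}$: indeed the displacement $h\mapsto u(gh)$ differs from $h\mapsto \rot(h)^T\nabla_\RR u(g)(h)$ by the term $h\mapsto \rot(h)^T u(g)$, which is a pure translation (an element of $\Utrans\RR \subset \zeroUiso\RR$), so projecting $u(g\fdot)|_\RR$ onto the complement of $\zeroUiso\RR$ can only shrink the norm relative to projecting $\nabla_\RR u(g)$ onto the complement of $\zeroUrot\RR$ and then restricting; summing over $g\in\CC_N$ gives the claim. (This is the verbatim analogue of the first display chain in the earlier proof, and one may also invoke $\nablazeronorm\fdot\RR = \nablazeronorm\fdot{\RR'}$ here.)

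For the reverse inequality I would, for each $g\in\CC_N$, expand $\norm{\pi_{\zeroUiso\RR}(u(g\fdot)|_\RR)}^2$ as a double infimum over $b\in\R^d$ and $S\in\Skew_{0,d_2}(d)$, split off the $h=\id$ term (which contributes $\norm{\rot(\id)u(g) - b - S\cdot 0}^2 = \norm{u(g)-b}^2$ since $\id\cdot x_0 - x_0 = 0$), substitute $b \leftrightarrow \rot(h)u(gh)$-recentered so that the $\RR'$-part becomes $\rot(h)u(gh) - u(g) - (\text{recentered }b) - S(h\cdot x_0 - x_0)$, then apply the elementary inequality $\norm{v+w}^2 \ge \tfrac12\norm v^2 - \norm w^2$ together with $\norm{(b)_{h\in\RR'}}^2 = \abs{\RR'}\norm b^2$, exactly as in the earlier proof. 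This produces $\norm{\pi_{\zeroUiso\RR}(u(g\fdot)|_\RR)}^2 \ge \frac1{2\abs{\RR'}}\norm{\pi_{\zeroUrot\RR}(\nabla_\RR u(g))}^2$ after discarding the remaining nonnegative $\inf_b(\cdots)$ term; summing over $g\in\CC_N$ yields $\zeronorm u\RR^2 \ge \frac1{2\abs{\RR'}}\nablazeronorm u\RR^2$, hence $\nablazeronorm\fdot\RR \le \sqrt{2\abs{\RR'}}\,\zeronorm\fdot\RR$.

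I do not expect a real obstacle: the only thing one must check carefully is that restricting $S$ to the subspace $\Skew_{0,d_2}(d)$ is compatible with the ``split off the $\id$-term / recenter $b$'' manipulation — but it is, because those steps only ever add or subtract translations and never touch the skew-symmetric parameter, so the constraint $S\in\Skew_{0,d_2}(d)$ passes through unchanged and the kernel bookkeeping ($\ker\zeronorm\fdot\RR = \zeroUiso\RR$, $\ker\nablazeronorm\fdot\RR = \zeroUrot\RR$ as submodules of the finite-dimensional spaces $(\R^d)^\RR$) is automatic. The mild point worth a sentence is that one should confirm $\zeroUrot\RR$ is indeed the kernel of $\norm{\pi_{\zeroUrot\RR}(\nabla_\RR\fdot)}$ on the relevant finite-dimensional space, which is immediate from the definition of $\pi_{\zeroUrot\RR}$.
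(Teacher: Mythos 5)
Your proposal is correct and is precisely what the paper does: its proof of this proposition consists of the single sentence that the argument is analogous to that of Proposition~\ref{Proposition:nablanormequivalent-ohne-0}, and your check that the recentering of $b$ never touches the skew-symmetric parameter (so the constraint $S\in\Skew_{0,d_2}(d)$ passes through unchanged) is exactly the point that makes the analogy work.
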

\begin{proof}
The proof is analogous to the proof of Proposition~\ref{Proposition:nablanormequivalent-ohne-0}.
\end{proof}
As a final preparation we state the following elementary lemma, which is well-known, and include its short proof.
\begin{Lemma}\label{Lemma:constantSkew}
There exists a constant $c>0$ such that for every $n\in\N$ it holds
\[\norm[\big]{x\otimes y^T+A}\ge c(\norm[\big]{x\otimes y^T}+\norm[\big]{A})
\qquad\text{for all }x,y\in\C^n,\,A\in\Skew(n,\C).\]
\end{Lemma}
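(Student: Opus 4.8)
The plan is to establish the bound $\norm{x\otimes y^T+A}\ge c(\norm{x\otimes y^T}+\norm{A})$ by a compactness argument, reducing to the case where $\norm{x\otimes y^T}+\norm{A}=1$ and exploiting that a rank-one matrix $x\otimes y^T$ can never equal a nonzero skew-symmetric matrix. First I would record the elementary observation underlying the whole statement: if $x\otimes y^T = -A$ with $A\in\Skew(n,\C)$, then $x\otimes y^T$ is skew-symmetric, hence $y\otimes x^T = -x\otimes y^T$, and since $x\otimes y^T$ has rank at most one while a nonzero skew-symmetric matrix has rank at least two, we must have $x\otimes y^T=0$ and $A=0$. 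Thus the map $(\text{rank-one part},\,\text{skew part})\mapsto$ their sum is injective on the relevant set, and the inequality is a quantitative version of this injectivity.

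For the main argument I would fix $n\in\N$ and consider the set $K=\set{(M,A)}{M=x\otimes y^T \text{ for some }x,y\in\C^n,\ A\in\Skew(n,\C),\ \norm M+\norm A=1}$. The set of rank-$\le 1$ matrices is closed (it is the zero set of all $2\times2$ minors), $\Skew(n,\C)$ is closed, and the constraint $\norm M+\norm A=1$ is closed and bounded; hence $K$ is compact. The function $(M,A)\mapsto\norm{M+A}$ is continuous and, by the observation above, strictly positive on $K$: indeed if $\norm{M+A}=0$ then $M=-A$ is simultaneously rank-$\le1$ and skew, forcing $M=A=0$, contradicting $\norm M+\norm A=1$. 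Therefore it attains a positive minimum $c_n>0$ on $K$, and by homogeneity $\norm{x\otimes y^T+A}\ge c_n(\norm{x\otimes y^T}+\norm A)$ for all $x,y,A$ (the case $\norm{x\otimes y^T}+\norm A=0$ being trivial).

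The remaining point is to obtain a constant $c>0$ \emph{independent of $n$}, as stated. Here I would use that for the Frobenius norm the problem localizes to a bounded dimension: given $x,y\in\C^n$ and $A\in\Skew(n,\C)$, restrict attention to the (at most) three-dimensional coordinate subspace... more carefully, one can argue that $\norm{x\otimes y^T+A}^2 = \norm{x\otimes y^T}^2 + 2\Re\langle x\otimes y^T,A\rangle + \norm A^2$ and estimate the cross term: writing $\langle x\otimes y^T,A\rangle = \bar y^T A \bar x$ (up to conjugation conventions) $= -\bar x^T A \bar y$ by skew-symmetry, one gets $2\Re\langle x\otimes y^T,A\rangle = \Re(\bar y^T A\bar x - \bar x^T A\bar y)$, and a direct algebraic manipulation shows this quantity is controlled so that $\norm{x\otimes y^T+A}^2\ge \tfrac12(\norm{x\otimes y^T}^2+\norm A^2)$ or similar with an absolute constant; then $\norm M+\norm A\le\sqrt2\,(\norm M^2+\norm A^2)^{1/2}$ finishes it with $c=\tfrac12$. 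I expect the dimension-uniformity to be the only genuinely delicate point: the compactness argument gives $c_n$ for each $n$ but not a priori a uniform bound, so one should either carry out the cross-term estimate by hand to pin down an explicit absolute constant, or reduce to small $n$ by noting that $x,y$ span at most a $2$-dimensional subspace and projecting $A$ accordingly only decreases $\norm A$ while leaving the cross term $\langle x\otimes y^T,A\rangle$ unchanged. Either route yields a universal $c>0$.
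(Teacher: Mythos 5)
Your qualitative observation and the fixed-$n$ compactness argument are fine, but the dimension-independence step -- which you yourself single out as the delicate point -- is where the whole content of the lemma lies, and neither of your two routes closes it as written. In route (a) the manipulation $2\Re\angles{x\otimes y^T,A}=\Re(\bar y^TA\bar x-\bar x^TA\bar y)$ is vacuous: since $\bar y^TA\bar x=-\bar x^TA\bar y$ for skew $A$, the right-hand side is just $2\Re\angles{x\otimes y^T,A}$ again, so no control on the cross term has been gained. Moreover the bound you propose, $\norm{x\otimes y^T+A}^2\ge\tfrac12(\norm{x\otimes y^T}^2+\norm A^2)$, is false: for $x=e_1$, $y=e_2$ and $A=\tfrac12(e_2\otimes e_1^T-e_1\otimes e_2^T)$ one computes $\norm{x\otimes y^T+A}^2=\tfrac12$ while $\tfrac12(\norm{x\otimes y^T}^2+\norm A^2)=\tfrac34$. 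The cross term really can be negative and of order $\norm{x\otimes y^T}\,\norm A$; what rescues the estimate is the rank-one structure, which you never exploit quantitatively. In route (b), compressing $A$ to the span of $x$ and $\bar y$ leaves the cross term unchanged, but the danger is the opposite of the one you name: $\norm{PAP}$ may be far smaller than $\norm A$ (even zero), so the low-dimensional estimate only returns $c(\norm{x\otimes y^T}+\norm{PAP})$, and you must separately recover the discarded quantity $\norm A^2-\norm{PAP}^2$ from the blocks of $x\otimes y^T+A$ off the chosen subspace. This can be done, but it is exactly the bookkeeping you omit.

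The paper's proof is a short direct argument that supplies the missing quantitative ingredient. Symmetric and antisymmetric matrices (with respect to transposition) are Frobenius-orthogonal, and the symmetric part of $x\otimes y^T+A$ is $\tfrac12(x\otimes y^T+y\otimes x^T)$; expanding gives $\norm{\tfrac12(x\otimes y^T+y\otimes x^T)}^2=\tfrac12\norm{x\otimes y^T}^2+\tfrac12\abs{\sum_i x_i\bar y_i}^2\ge\tfrac12\norm{x\otimes y^T}^2$, hence $\norm{x\otimes y^T+A}\ge\tfrac1{\sqrt2}\norm{x\otimes y^T}$ with a constant independent of $n$. A trivial two-case comparison of $\norm A$ against $2\norm{x\otimes y^T}$ then yields the lemma with $c=\tfrac1{3\sqrt2}$, with no compactness needed. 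If you prefer to keep your cross-term framework, the fact you must add is precisely this rank-one estimate: the antisymmetric part of $x\otimes y^T$ has at most half its squared Frobenius norm, so $\abs{\angles{x\otimes y^T,A}}\le\tfrac1{\sqrt2}\norm{x\otimes y^T}\,\norm A$, and the resulting quadratic form $t^2-\sqrt2\,ts+s^2$ is positive definite with smallest eigenvalue $1-\tfrac1{\sqrt2}$.
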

\begin{proof}
Let $x,y\in\C^n$ and $A\in\Skew(n,\C)$. Since $\C^{n\times n}=\Sym(n,\C)\oplus\Skew(n,\C)$ we have
\begin{align*}
\norm[\big]{x\otimes y^T+A}^2&\ge\norm[\Big]{\frac12\parens[\big]{x\otimes y^T+y\otimes x^T}}^2\\
&=\frac12\norm[\big]{x\otimes y^T}^2+\frac12\abs[\bigg]{\sum_{i=1}^n x_i\conj{y_i}}^2\\
&\ge\frac12\norm[\big]{x\otimes y^T}^2.
\end{align*}
If $\norm A\le 2\norm{x\otimes y^T}$, then
\[\norm[\big]{x\otimes y^T+A}\ge\frac1{\sqrt2}\norm[\big]{x\otimes y^T}\ge\frac1{3\sqrt 2}\parens[\big]{\norm[\big]{x\otimes y^T}+\norm A}.\]
If $\norm A\ge 2\norm{x\otimes y^T}$, then
\[\norm[\big]{x\otimes y^T+A}\ge\norm A-\norm[\big]{x\otimes y^T}\ge \frac13\parens[\big]{\norm[\big]{x\otimes y^T}+\norm A}.\qedhere\]
\end{proof}
%
%
The following lemma provides a technical core estimate on which the proof of our main Theorem~\ref{Theorem:Korn} hinges. 
\begin{Lemma}\label{Lemma:helpTheorem}
Let $n\in\N$, $q\in\N_0$ and $\beta_1,\dots,\beta_q\in\R$. Then there exists an integer $N\in\N$ such that
\[\max_{m\in\{1,\dots,N\}}\norm[\bigg]{a\otimes(\sin(m\alpha_1),\dots,\sin(m\alpha_n))+\sum_{k=1}^q\sin(m\beta_k)B_k+mS}\ge \norm S\]
for all $a\in\C^n$, $\alpha_1,\dots,\alpha_n\in\R$, $B_1,\dots,B_q\in\C^{n\times n}$ and $S\in\Skew(n,\C)$.
\end{Lemma}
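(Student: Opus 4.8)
The plan is to reduce the statement to a claim about trigonometric polynomials with a linear (secular) term and to control the latter by a compactness/minimax argument. First I would observe that the left-hand side, as a function of $m$, is the operator norm of a matrix of the form $a\otimes v_m^T + \sum_k \sin(m\beta_k) B_k + mS$, where $v_m=(\sin(m\alpha_1),\dots,\sin(m\alpha_n))$. The key structural point is that $a\otimes v_m^T$ is rank one and $\sum_k\sin(m\beta_k)B_k$ is a bounded generalized power sum (bounded uniformly in $m$ because each $\sin$ is), whereas $mS$ grows linearly in $m$. So morally, if $S\neq 0$, the linear term eventually dominates and one just needs the growth to kick in before step $N$, with $N$ depending only on $n$ and $q$ (and the $\beta_k$), not on $a$, the $\alpha_i$, the $B_k$, or $S$.

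Second, I would make the dependence on $S$ scale-invariant: by homogeneity we may assume $\norm{S}=1$, and we must find $N=N(n,q,\beta_1,\dots,\beta_q)$ such that for every admissible choice of $a,\alpha_i,B_k$ and every $S\in\Skew(n,\C)$ with $\norm S=1$ there is some $m\le N$ with the displayed expression $\ge 1$. Suppose not: then for each $N$ there are counterexamples, i.e.\ for all $m\in\{1,\dots,N\}$ the norm is $<1$. Dividing the inner expression by $m$ we get
\[
\norm[\Big]{\tfrac1m\, a\otimes v_m^T + \tfrac1m\sum_{k=1}^q \sin(m\beta_k)B_k + S} < \tfrac1m,
\]
so along a putative bad sequence the "slope" $S$ is being cancelled, for a long range of $m$, by a rank-one term plus a bounded trigonometric term, each divided by $m$. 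This is exactly the regime controlled by Turán's classical minimax theorem on generalized power sums: a nonzero generalized power sum $\sum_j c_j z_j^m$ with finitely many frequencies cannot stay small (relative to its size) on too long an interval of consecutive integers $m$, with the length bound depending only on the number of terms. I would apply Turán's inequality coordinate-wise to the matrix entries: the $(i,j)$ entry of $a\otimes v_m^T$ expands, via $\sin(m\alpha_i)=\frac{1}{2i}(e^{im\alpha_i}-e^{-im\alpha_i})$, into a generalized power sum with at most $2n$ frequencies (from the rows $\alpha_i$), those of $\sum_k\sin(m\beta_k)B_k$ contribute at most $2q$ fixed frequencies, and $mS$ contributes the frequency $1$ with a polynomial (degree-one) coefficient — which Turán-type results also handle. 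The total number of frequencies is bounded by $2n+2q+1$, independent of the data. Turán's theorem then yields an $N$ depending only on that count (and on the $\beta_k$, which are fixed) such that any such generalized power sum, if not identically the zero sum, exceeds a fixed fraction of $\norm S$ at some $m\le N$ — forcing the bound $\ge\norm S$ after reabsorbing constants by taking $N$ slightly larger, or by iterating the estimate on a slightly longer window.

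The main obstacle, and where I would spend the most care, is the bookkeeping that separates the genuinely growing contribution of $S$ from the bounded clutter. Two subtleties: (1) one must ensure that $S\neq0$ actually survives in the relevant matrix entries — here Lemma~\ref{Lemma:constantSkew} is the right tool, since it says the symmetric rank-$\le2$ part $x\otimes y^T$ cannot absorb a skew matrix, giving a lower bound $\norm{x\otimes y^T+A}\ge c(\norm{x\otimes y^T}+\norm A)$ that lets us isolate a coordinate where $\norm S$ shows up undiluted; and (2) the frequencies $\alpha_i$ are not fixed — they vary with the data — so one cannot literally apply a version of Turán's theorem that needs the frequencies bounded away from each other. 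The fix is that Turán's pure power-sum inequality (the one bounding $\max_{m\le N}\abs{\sum_j c_j z_j^m}$ from below by $(\text{const}/N)^{?}\max_j\abs{c_j}$ times a factor depending only on the number of terms $J$, uniformly over all unimodular $z_j$) is exactly frequency-free; I would invoke that form, combine it with Lemma~\ref{Lemma:constantSkew} applied entrywise to pick out a dominant coordinate of $S$, and handle the linear term $mS$ either by differencing (replace $m$ by $m$ and $m+1$ and subtract, turning $mS$ into the constant $S$ at the cost of doubling the window) or by the confluent generalized-power-sum version of Turán's estimate. Either route gives an explicit $N=N(n,q,\beta_1,\dots,\beta_q)$, completing the argument.
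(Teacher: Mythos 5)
There is a genuine gap, and it sits exactly at the point you flag as subtlety (2). Your proposed fix — a ``frequency-free'' form of Tur\'an's inequality bounding $\max_{m\le N}\abs[\big]{\sum_j c_j z_j^m}$ from below by a constant (depending only on the number of terms) times $\max_j\abs{c_j}$, uniformly over all unimodular $z_j$ — does not exist. Taking $z_1=1$, $z_2=\euler^{\iu\epsilon}$, $c_1=1$, $c_2=-1$ gives $\abs{c_1z_1^m+c_2z_2^m}=O(N\epsilon)$ for all $m\le N$ while $\max_j\abs{c_j}=1$, so no such bound can hold with $N$ fixed as $\epsilon\to0$. Tur\'an's minimax theorem (Theorem~\ref{Theorem:TuranTheorem}) genuinely needs the separation hypothesis $\min_{\mu\neq\nu}\abs{z_\mu-z_\nu}\ge\delta\max_j\abs{z_j}$, and its constant $\frac1n(\delta/2)^{n-1}$ degenerates as $\delta\to0$; the separation-free ``first main theorem'' only bounds below by $\abs[\big]{\sum_j c_j}$, which is useless here because the whole expression vanishes at $m=0$, so every entrywise coefficient sum is zero. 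Since the $\alpha_i$ range over all of $\R$, they can coincide with, or be arbitrarily close to, $0$, $\pi$, or $\pm\beta_k$, and in that regime your argument gives nothing. This near-collision regime is precisely where the paper's proof does its real work: it Taylor-expands $\sin(m\alpha)$ around the colliding frequency $\beta$, which merges the rank-one term into the $B_k$-sum but creates a secular term $m(\alpha-\beta)\cos(m\beta)\,a$ competing with $mS$, and it then invokes Kronecker's approximation theorem to locate an $m=q_k$ at which $\cos(m\beta_k)$ is nearly zero so that $mS$ survives. Your proposal contains no substitute for this mechanism.

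Your other ingredients are sound but do not close the gap: the reduction to small blocks (the paper reduces to $n=2$; you work entrywise), the use of Lemma~\ref{Lemma:constantSkew} to prevent the rank-one term from absorbing $S$, and the application of Tur\'an in the case where all frequencies are genuinely separated all match the paper's Case~3. The differencing idea for the term $mS$ also does not rescue the collision case: after differencing, a coefficient $a$ with $\norm a\abs{\alpha_i}\sim1$ and $\alpha_i\to0$ produces an essentially constant (in $m$) perturbation of size $\sim1$ at a frequency indistinguishable from that of the constant $S$ over the window $\{1,\dots,N\}$, so the separation problem reappears. To repair the proof you would need to add the case analysis on $\dist(\alpha_i,\{0,\pi,\pm\beta_1,\dots,\pm\beta_q\}+2\pi\Z)$ together with the Taylor/Kronecker argument, which is the bulk of the paper's proof.
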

\begin{Remark}
If $q=0$, then the term $\sum_{k=1}^q\sin(m\beta_k)B_k$ is the empty sum.
\end{Remark}
\begin{proof}
It suffices to prove that there exists a constant $c>0$ such that for all $n\in\N$, $q\in\N_0$ and $\beta_1,\dots,\beta_q\in\R$ there exists an integer $N\in\N$ such that
\[\max_{m\in\{1,\dots,N\}}\norm[\bigg]{a\otimes(\sin(m\alpha_1),\dots,\sin(m\alpha_n))+\sum_{k=1}^q\sin(m\beta_k)B_k+mS}\ge c\norm S\]
for all $a\in\C^n$, $\alpha_1,\dots,\alpha_n\in\R$, $B_1,\dots,B_q\in\C^{n\times n}$ and $S\in\Skew(n,\C)$. Indeed, applying this inequality with $\tilde N$, $(\ceil{\tfrac1c}\alpha_i)_{1\le i\le n}$, $(\ceil{\tfrac1c}\beta_k)_{1\le k\le q}$ and $\ceil{\tfrac1c}S$ and setting $N=\ceil{\tfrac1c}\tilde N$ we obtain the original claim from
\begin{align*}
\max_{m\in\{1,\dots,N\}}&\norm[\bigg]{a\otimes(\sin(m\alpha_1),\dots,\sin(m\alpha_n))+\sum_{k=1}^q\sin(m\beta_k)B_k+mS}\\
\begin{split}
&\ge\max_{m\in\braces[\big]{1,\dots,\tilde N}}\norm[\Big]{a\otimes(\sin(m(\ceil{\tfrac1c}\alpha_1)),\dots,\sin(m(\ceil{\tfrac1c}\alpha_n)))\\
&\qquad+\sum_{k=1}^q\sin(m(\ceil{\tfrac1c}\beta_k))B_k+m(\ceil{\tfrac1c}S)}. 
\end{split}
\end{align*}
Since
\[\norm M\ge\frac1{n^2}\sum_{\substack{i,j\in\{1,\dots,n\}\\i<j}}\norm[\Big]{\big(\begin{smallmatrix}m_{ii}&m_{ij}\\m_{ji}&m_{jj}\end{smallmatrix}\big)}\]
for all $M=(m_{ij})\in\C^{n\times n}$, it suffices to prove the assertion for $n=2$.\\
Let $q\in\N_0$ and $\beta_1,\dots,\beta_q\in\R$. Without loss of generality we assume $\beta_1,\dots,\beta_q\in\R\setminus(\pi\Q)$: Let $n_0\in\N$ be such that $n_0\beta_k\in\pi\Z$ for all $k\in\{1,\dots,q\}$ with $\beta_k\in\pi\Q$.
Then we have
\begin{align*}
&\max_{m\in\{1,\dots,n_0N\}}\norm[\bigg]{a\otimes(\sin(m\alpha_1),\sin(m\alpha_2))+\sum_{k=1}^q\sin(m\beta_k)B_k+mS}\ge\\
&\quad\max_{m\in\{1,\dots,N\}}\norm[\bigg]{a\otimes(\sin(m(n_0\alpha_1)),\sin(m(n_0\alpha_2)))+\sum_{\substack{k=1\\ \beta_k\notin\pi\Q}}^q\sin(m(n_0\beta_k))B_k+m(n_0S)}
\end{align*}
for all $N\in\N$, $a\in\C^2$, $\alpha_1,\alpha_2\in\R$, $B_1,\dots,B_q\in\C^{2\times 2}$ and $S\in\Skew(2,\C)$.\\
For all $a>0$ we define the function
\begin{align*}
\abs\fdot_a\colon&\R\to[0,\infty)\\
&x\mapsto\dist(x,a\Z).
\end{align*}
%
%
Moreover, without loss of generality we may assume $\abs{\beta_k-\beta_l}_{2\pi}>0$ for all $k\neq l$ and since
\[\sin(m\beta)=-\sin(m(2\pi-\beta))\]
also $\abs{\beta_k+\beta_l}_{2\pi}\neq0$ for all $k\neq l$.
For the definition of a suitable integer $N\in\N$ and the following proof we define some positive constants.
By Lemma~\ref{Lemma:constantSkew} there exists a constant $c_L>0$ such that
\[\norm{x\otimes y^T+S}\ge c_L\norm x(\abs{y_1}+\abs{y_2})+c_L\norm{S}\]
for all $x,y\in\C^2$ and $S\in\Skew(2,\C)$.
In particular, this inequality implies the assertion for $q=0$.
Hence we may assume $q\neq0$, \ie $q\in\N$.
Let
\begin{align*}
\delta_1&=\min_{\substack{\gamma_1,\gamma_2\in\{\pm\beta_1,\dots,\pm\beta_q\}\\ \gamma_1\neq\gamma_2}}\abs{\gamma_1-\gamma_2}_{2\pi},\quad\mu_1=\frac1{2q}\parens[\Big]{\frac{\delta_1}{2\pi}}^{2q-1},\\
C_1&=\frac{4(2q+1)}{\mu_1},\quad C_2=\frac{6q}{\mu_1} \quad\text{and}\quad C_3=\max\braces[\Big]{\frac{4q+2}{\mu_1},\frac{32\pi C_2}{\delta_1}}.
\end{align*}
By Kronecker's approximation Theorem~\ref{Theorem:Kronecker}, for all $k\in\{1,\dots,q\}$ there exists an integer $q_k$ such that $2C_3+2<q_k$ and
\[\abs[\Big]{q_k\cdot\frac{\beta_k}\pi+\frac12}_1\le\frac1{3\pi C_3}.\]
Let
\[N_1=\max\braces[\bigg]{\ceil[\bigg]{\frac{2C_1}{c_L}},2q,1+\ceil[\bigg]{\frac{16\pi C_2}{\delta_1}},q_1,\dots,q_q}\in\N.\]
For all $\alpha\in\R$ we define $(\alpha)_{2\pi}\in\R$ by $\{(\alpha)_{2\pi}\}=[-\pi,\pi)\cap(\alpha+2\pi\Z)$. We have $\abs{(\alpha)_{2\pi}}=\abs\alpha_{2\pi}$.
By Taylor's Theorem we have for all $\alpha,\beta\in\R$ and $n\in\N$
\[\sin(n\alpha)=\sin(n(\beta+(\alpha-\beta)_{2\pi}))=\sin(n\beta)+n(\alpha-\beta)_{2\pi}\cos(n\beta)+R(n,\alpha,\beta)\]
where $R(n,\alpha,\beta)$ is the remainder term. Let $\delta_2>0$ be so small that
\begin{equation}\label{eq:LemmaTaylor}
\abs{R(n,\alpha,\beta)}\le\tfrac12n\abs{\alpha-\beta}_{2\pi}\abs{\cos(n\beta)}
\end{equation}
for all $n\in\{1,\dots,N_1\}$, $\alpha\in\R$ with $\abs{\alpha-\beta}_{2\pi}<\delta_2$ and $\beta\in\{0,\pi,\beta_1,\dots,\beta_q\}$.
Let
\[\delta_3=\min\{\delta_1,\delta_2\}, \quad\mu_2=\frac1{2q+2}\parens[\Big]{\frac{\delta_3}{2\pi}}^{2q+1}\quad\text{and}\quad C_4=\frac{2q+3}{\mu_2}.\]
Let
\[N=\max\braces[\big]{N_1,1+\ceil{C_4}}\in\N.\]
Now, let $a=(a_1,a_2)^T\in\C^2$, $\alpha_1,\alpha_2\in\R$, $B_k=\parens[\Big]{\begin{smallmatrix}b_{11}^{(k)}&b_{12}^{(k)}\\ b_{21}^{(k)}&b_{22}^{(k)}\end{smallmatrix}}\in\C^{2\times2}$ for all $k\in\{1,\dots,q\}$ and $S=\parens[\big]{\begin{smallmatrix}0&-s\\s&0\end{smallmatrix}}\in\Skew(2,\C)$. We denote
\[\text{LHS}=\max_{m\in\{1,\dots,N\}}\norm[\bigg]{a\otimes(\sin(m\alpha_1),\sin(m\alpha_2))+\sum_{k=1}^q\sin(m\beta_k)B_k+mS}.\]
\begin{case}[Case 1: $\forall\,i\in\{1,2\} : ((\abs{\alpha_i}_{2\pi}<\delta_2)\lor(\abs{\alpha_i-\pi}_{2\pi}<\delta_2))$] \
\begin{caseinside}[Case 1.1: $\sum_{k=1}^q\norm{B_k}\ge C_1(\norm a(\abs{\alpha_1}_\pi+\abs{\alpha_2}_\pi)+\norm S)$]\ \\
Let $i,j\in\{1,2\}$ with $\sum_{k=1}^q\abs{b_{ij}^{(k)}}\ge\tfrac14\sum_{k=1}^q\norm{B_k}$.
By the definition of $\delta_1$ we have
\[\min_{\substack{\gamma_1,\gamma_2\in\{\pm\beta_1,\dots,\pm\beta_q\}\\ \gamma_1\neq\gamma_2}}\abs{\euler^{\iu\gamma_1}-\euler^{\iu\gamma_2}}\ge\min_{\substack{\gamma_1,\gamma_2\in\{\pm\beta_1,\dots,\pm\beta_q\}\\ \gamma_1\neq\gamma_2}}\frac{\abs{\gamma_1-\gamma_2}_{2\pi}}\pi\ge\frac{\delta_1}\pi.\]
By Tur\'{a}n's Minimax Theorem~\ref{Theorem:TuranTheorem} there exist an integer $\nu\in\{1,\dots,2q\}$ such that
\begin{align*}
\norm[\bigg]{\sum_{k=1}^q \sin(\nu\beta_k)B_k}&\ge\abs[\bigg]{\sum_{k=1}^q b_{ij}^{(k)} \sin(\nu\beta_k)}\\
&=\abs[\bigg]{\sum_{k=1}^q \parens[\bigg]{\frac{\iu b_{ij}^{(k)}}2\euler^{-\iu\nu\beta_k}+\frac{-\iu b_{ij}^{(k)}}2\euler^{\iu \nu\beta_k}}}\\
&\ge\mu_1\sum_{k=1}^q\abs[\big]{b_{ij}^{(k)}}\\
&\ge\frac{\mu_1}4\sum_{k=1}^q\norm{B_k}.
\end{align*}
We have
\begin{align*}
\text{LHS}&\ge\norm[\bigg]{\sum_{k=1}^q \sin(\nu\beta_k)B_k}-\norm{a\otimes(\sin(\nu\alpha_1),\sin(\nu\alpha_2))}-\norm{\nu S}\\
&\ge\frac{\mu_1}4\sum_{k=1}^q\norm{B_k}-2q\norm a(\abs{\alpha_1}_\pi+\abs{\alpha_2}_\pi)-2q\norm S\\
&\ge\norm S.
\end{align*}
\end{caseinside}
\begin{caseinside}[Case 1.2: $\sum_{k=1}^q\norm{B_k}\le C_1(\norm a(\abs{\alpha_1}_\pi+\abs{\alpha_2}_\pi)+\norm S)$]\ \\
We have
\begin{align*}
\text{LHS}&\ge\norm{a\otimes(\sin(N_1\alpha_1),\sin(N_1\alpha_2))+N_1S}-\norm[\bigg]{\sum_{k=1}^q \sin(N_1\beta_k) B_k}\\
&\ge c_L\norm a(\abs{\sin(N_1\alpha_1)}+\abs{\sin(N_1\alpha_2)})+c_L\norm{N_1S}-\sum_{k=1}^q\norm{B_k}\\
&\overset{\eqref{eq:LemmaTaylor}}\ge\frac{c_LN_1}2\norm a(\abs{\alpha_1}_\pi+\abs{\alpha_2}_\pi)+c_LN_1\norm S-\sum_{k=1}^q\norm{B_k}\\
&\ge\frac{c_LN_1}2\parens[\big]{\norm a(\abs{\alpha_1}_\pi+\abs{\alpha_2}_\pi)+\norm S}+\frac{c_L}2\norm S-\sum_{k=1}^q\norm{B_k}\\
&\ge\frac{c_L}2\norm S.
\end{align*}
\end{caseinside}
\end{case}
\begin{case}[Case 2: $\exists\,i\in\{1,2\},\,\exists\, k\in\{1,\dots,q\} : ((\abs{\alpha_i-\beta_k}_{2\pi}<\delta_2)\lor(\abs{\alpha_i+\beta_k}_{2\pi}<\delta_2))$]\ \\
Without loss of generality let $i=1$ and $k=1$. 
Without loss of generality we may assume $\abs{\alpha_1-\beta_1}_{2\pi}<\delta_2$ since
\[a\otimes (\sin(m\alpha_1),\sin(m\alpha_2))=(-a)\otimes(\sin(m(-\alpha_1)),\sin(m(-\alpha_2)))\quad\text{for all }m\in\N.\]
Let $\delta_k$ be equal to 1 if $k=0$ and equal to 0 otherwise.
\begin{caseinside}[Case 2.1: $\sum_{k=1}^q\abs{a_2\delta_{k-1}+b_{21}^{(k)}}\ge C_2\abs{a_2}\abs{\alpha_1-\beta_1}_{2\pi}$ and \\
$\phantom{\qquad \qquad} \max\{\abs{a_2}\abs{\alpha_1-\beta_1}_{2\pi},\sum_{k=1}^q\abs{a_2\delta_{k-1}+b_{21}^{(k)}}\}\ge C_3\abs s$]\ \\
Since $C_2\ge1$ the condition is equivalent to
\[\sum_{k=1}^q\abs{a_2\delta_{k-1}+b_{21}^{(k)}}\ge C_2\abs{a_2}\abs{\alpha_1-\beta_1}_{2\pi} \text{  and  } \sum_{k=1}^q\abs{a_2\delta_{k-1}+b_{21}^{(k)}}\ge C_3\abs s.\]
By Tur\'{a}n's minimax theorem (analogously to Case 1.1) there exists an integer $\nu\in\{1,\dots,2q\}$ such that
\begin{align*}
\abs[\bigg]{\sum_{k=1}^q&\parens[\big]{a_2\delta_{k-1}+b_{21}^{(k)}}\sin(\nu\beta_k)}\\
&=\abs[\bigg]{\sum_{k=1}^q\parens[\bigg]{\frac{\iu(a_2\delta_{k-1}+b_{21}^{(k)})}2\euler^{-\iu\nu\beta_k}+\frac{-\iu(a_2\delta_{k-1}+b_{21}^{(k)})}2\euler^{\iu\nu\beta_k}}}\\
&\ge\mu_1\sum_{k=1}^q\abs[\big]{a_2\delta_{k-1}+b_{21}^{(k)}}.
\end{align*}
We have
\begin{align*}
\text{LHS}&\overset{\eqref{eq:LemmaTaylor}}\ge \abs[\bigg]{\sum_{k=1}^q\parens[\big]{a_2\delta_{k-1}+b_{21}^{(k)}}\sin(\nu\beta_k)}-\frac32\abs{a_2}\nu\abs{\alpha_1-\beta_1}_{2\pi}\abs{\cos(\nu\beta_1)}-\nu\abs s\\
&\ge\parens[\Big]{\frac{\mu_1}2+\frac{\mu_1}2}\sum_{k=1}^q\abs[\big]{a_2\delta_{k-1}+b_{21}^{(k)}}-3q\abs{a_2}\abs{\alpha_1-\beta_1}_{2\pi}-2q\abs s\\
&\ge\abs s\\
&=\frac1{\sqrt 2}\norm S.
\end{align*}
\end{caseinside}
\begin{caseinside}[Case 2.2: $\sum_{k=1}^q\abs{a_2\delta_{k-1}+b_{21}^{(k)}}\le C_2\abs{a_2}\abs{\alpha_1-\beta_1}_{2\pi}$ and \\
$\phantom{\qquad \qquad} \max\{\abs{a_2}\abs{\alpha_1-\beta_1}_{2\pi},\sum_{k=1}^q\abs{a_2\delta_{k-1}+b_{21}^{(k)}}\}\ge C_3\abs s$]\ \\
By Tur\'{a}n's minimax theorem there exists an integer $\nu\in\{N_1-1,N_1\}$ such that
\[\abs{\cos(\nu\beta_1)}=\abs[\Big]{\tfrac12\euler^{\iu\nu\beta_1}+\tfrac12\euler^{-\iu\nu\beta_1}}\ge\frac{\delta_1}{4\pi}.\]
We have
\begin{align*}
\text{LHS}&\overset{\eqref{eq:LemmaTaylor}}\ge\frac12\abs{a_2}\abs{\cos(\nu\beta_1)}\nu\abs{\alpha_1-\beta_1}_{2\pi}-\abs[\bigg]{\sum_{k=1}^q(a_2\delta_{k-1}+b_{21}^{(k)})\sin(\nu\beta_k)}-\nu\abs s\\
&\ge\parens[\bigg]{\frac{\delta_1(N_1-1)}{16\pi}+\frac{\delta_1\nu}{16\pi}}\abs{a_2}\abs{\alpha_1-\beta_1}_{2\pi}-\sum_{k=1}^q\abs[\big]{a_2\delta_{k-1}+b_{21}^{(k)}}-\nu\abs s\\
&\ge\nu\abs s\\
&\ge\frac1{\sqrt 2}\norm S.
\end{align*}
\end{caseinside}
\begin{caseinside}[Case 2.3: $\max\{\abs{a_2}\abs{\alpha_1-\beta_1}_{2\pi},\sum_{k=1}^q\abs{a_2\delta_{k-1}+b_{21}^{(k)}}\}\le C_3\abs s$]\ \\
By Definition of $q_1$ we have
\[\abs{\cos(q_1\beta_1)}=\abs{\sin(q_1\beta_1+\tfrac\pi2)}\le\abs{q_1\beta_1+\tfrac\pi2}_\pi=\pi\abs{\tfrac{q_1\beta_1}\pi+\tfrac12}_1\le\tfrac1{3C_3}.\]
So we have
\begin{align*}
\text{LHS}&\overset{\eqref{eq:LemmaTaylor}}\ge q_1\abs s-\tfrac32\abs{a_2}\abs{\cos(q_1\beta_1)}q_1\abs{\alpha_1-\beta_1}_{2\pi}-\abs[\bigg]{\sum_{k=1}^q\parens[\big]{a_2\delta_{k-1}+b_{21}^{(k)}}\sin(q_1\beta_k)}\\
&\ge\parens[\Big]{1+\frac{q_1}2+C_3}\abs s-\frac{q_1}{2C_3}\abs{a_2}\abs{\alpha_1-\beta_1}_{2\pi}-\sum_{k=1}^q\abs[\big]{a_2\delta_{k-1}+b_{21}^{(k)}}\\
&\ge\frac1{\sqrt 2}\norm S.
\end{align*}
\end{caseinside}
\end{case}
\begin{case}[Case 3: $\exists\,i\in\{1,2\} : (\abs{\alpha_i-\beta}_{2\pi}\ge\delta_2\quad\forall\,\beta\in\{0,\pi,\pm\beta_1,\dots,\pm\beta_q\})$]\ \\
Without loss of generality let $i=1$.
\begin{caseinside}[Case 3.1: $\abs{a_2}+\sum_{k=1}^q\abs{b_{21}^{(k)}}\ge C_4\abs s$]$\,$\\
By Definition of $\delta_3$ we have
\begin{align*}
\min_{\substack{\gamma_1,\gamma_2\in\{\pm\alpha_1,\pm\beta_1,\dots,\pm\beta_q\}\\ \gamma_1\neq\gamma_2}}\abs[\big]{\euler^{\iu\gamma_1}-\euler^{\iu\gamma_2}}&\ge\min_{\substack{\gamma_1,\gamma_2\in\{\pm\alpha_1,\pm\beta_1,\dots,\pm\beta_q\}\\\gamma_1\neq\gamma_2}}\frac{\abs{\gamma_1-\gamma_2}_{2\pi}}\pi\\
&\ge\frac{\min\{\delta_1,\delta_2\}}\pi\\
&=\frac{\delta_3}\pi.
\end{align*}
By Tur\'{a}n's minimax theorem there exists an integer $\nu\in\{1,\dots,2q+2\}$ such that
\begin{align*}
\abs[\bigg]{a_2&\sin(\nu\alpha_1)+\sum_{k=1}^qb_{21}^{(k)}\sin(\nu\beta_k)}\\
&=\abs[\bigg]{\frac{\iu a_2}2\euler^{-\iu\nu\alpha_1}+\frac{-\iu a_2}2\euler^{\iu\nu\alpha_1}+\sum_{k=1}^q\parens[\bigg]{\frac{\iu b_{21}^{(k)}}2\euler^{-\iu\nu\beta_k}+\frac{-\iu b_{21}^{(k)}}2\euler^{\iu\nu\beta_k}}}\\
&\ge\mu_2\parens[\bigg]{\abs{a_2}+\sum_{k=1}^q\abs[\big]{b_{21}^{(k)}}}.
\end{align*}
We have
\begin{align*}
\text{LHS}&\ge\abs[\bigg]{a_2\sin(\nu\alpha_1)+\sum_{k=1}^qb_{21}^{(k)}\sin(\nu\beta_k)}-\nu\abs s\\
&\ge\mu_2\parens[\bigg]{\abs{a_2}+\sum_{k=1}^q\abs[\big]{b_{21}^{(k)}}}-(2q+2)\abs s\\
&\ge\abs s\\
&=\frac1{\sqrt 2}\norm S.
\end{align*}
\end{caseinside}
\begin{caseinside}[Case 3.2: $\abs{a_2}+\sum_{k=1}^q\abs{b_{21}^{(k)}}\le C_4\abs s$]\ \\
We have
\begin{align*}
\text{LHS}&\ge N\abs s-\abs{a_2\sin(N\alpha_1)}-\abs[\bigg]{\sum_{k=1}^qb_{21}^{(k)}\sin(N\beta_k)}\\
&\ge N\abs s-\parens[\bigg]{\abs{a_2}+\sum_{k=1}^q\abs[\big]{b_{21}^{(k)}}}\\
&\ge\abs s\\
&=\frac1{\sqrt 2}\norm S.
\end{align*}
\end{caseinside}
\end{case}
Since Case 2 and Case 3 include the case that
\[\exists\, i\in\{1,2\} : ((\abs{\alpha_i}_{2\pi}\ge\delta_2)\land(\abs{\alpha_i-\pi}_{2\pi}\ge\delta_2)),\]
the assertion is proven.
\end{proof}
\begin{Theorem}[A discrete Korn inequality]\label{Theorem:Korn}
Suppose that $\RR\subset\G$ is an admissible neighborhood range of $\id$.
Then the two seminorms $\norm\fdot_{\RR}$ and $\zeronorm\fdot{\RR}$ are equivalent.
\end{Theorem}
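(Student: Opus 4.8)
The plan is to prove the nontrivial direction $\zeronorm{u}{\RR}\le C\,\norm{u}_\RR$ (the reverse being immediate since $\zeroUiso\RR\subset\Uiso\RR$) in three stages: reduce to a conveniently enlarged interaction range; localise the problem to controlling, patchwise, the ``$\Skew(d_2)$-block'' of the best-fitting infinitesimal rotations; and then control that block \emph{uniformly in the wavelength} by means of Lemma~\ref{Lemma:helpTheorem}. For the first stage, observe that every finite superset of a set with Property~$\proptwo$ again has Property~$\proptwo$; that $\zeronorm{\fdot}{\RR}\le\zeronorm{\fdot}{\widetilde\RR}$ whenever $\RR\subset\widetilde\RR$ (since an optimal $\zeroUiso{\widetilde\RR}$-approximant restricts to a $\zeroUiso\RR$-element); and that $\norm{\fdot}_{\widetilde\RR}\le C\,\norm{\fdot}_\RR$ by Theorem~\ref{Theorem:equivalence}. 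Hence it suffices to prove $\zeronorm{\fdot}{\widetilde\RR}\le C\,\norm{\fdot}_{\widetilde\RR}$ for one well-chosen finite $\widetilde\RR\supset\RR$ with Property~$\proptwo$, and I would take $\widetilde\RR$ to contain, in addition to $\RR$ and a fixed set $\RR''$ with Property~$\propone$, all patches $t^{m}\RR''$ with $\abs m\le N$, where $t$ runs over a fixed family of ``translation-type'' elements of $\T^{m_0}$ (whose translation components span $\R^{d_2}$, with $\rot(t)\in\O(d_1)\oplus\{I_{d_2}\}$) and $N=N(\G)$ is the integer provided by Lemma~\ref{Lemma:helpTheorem} for $q$ and $\beta_1,\dots,\beta_q$ equal to the rotation angles of the finitely many linear parts $\rot(g)$ occurring in the chosen group data; crucially $N$ depends only on $\G$.

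\emph{Localisation.} Fix a $\T^N$-periodic $u\in\UPer$. For each $g\in\CC_N$ choose $(b(g),S(g))\in\R^d\times\Skew(d)$ attaining $\dist(u(g\fdot)|_{\widetilde\RR},\Uiso{\widetilde\RR})$ and let $S_3(g)\in\Skew(d_2)$ be the lower-right block of $S(g)$. Replacing $S(g)$ by its truncation in $\Skew_{0,d_2}(d)$ (that is, erasing $S_3(g)$) converts the fitted rigid motion into an element of $\zeroUiso{\widetilde\RR}$ at an extra cost $\le c_{\widetilde\RR}\norm{S_3(g)}$, so that
\[ \zeronorm{u}{\widetilde\RR}^2\le 2\,\norm{u}_{\widetilde\RR}^2+c_{\widetilde\RR}^2\,\frac1{\abs{\CC_N}}\sum_{g\in\CC_N}\norm{S_3(g)}^2 . \]
Everything is thus reduced to the discrete Korn estimate for the rotational block, namely $\frac1{\abs{\CC_N}}\sum_{g\in\CC_N}\norm{S_3(g)}^2\le C\,\norm{u}_{\widetilde\RR}^2$.

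\emph{Control of $S_3$ via Tur\'{a}n's minimax theorem.} Splitting $\G$ into $\T^{m_0}$-cosets and applying the Plancherel formula (Proposition~\ref{Proposition:TFplancherelmatrix}) on the abelian subgroup $\T^{m_0}$ reduces the last estimate to the case in which $u$ oscillates with a single frequency along each translation direction, that is, $u(g\,t^{m})=w(g)\,z^{m}$ with $\abs z=1$. For such a $u$, writing the fitted relations $\rot(t^{m})u(g\,t^{m})=b(g)+S(g)(t^{m}\gdot x_0-x_0)+e(g,t^{m})$ for $\abs m\le N$, restricting to the $\Skew(d_2)$-directions, and assembling the contributions of the various translation directions $t$ and of the near patches $r\in\RR''$ into a matrix identity, one obtains --- on each fixed $2\times 2$ coordinate sub-block --- precisely an expression of the type treated in Lemma~\ref{Lemma:helpTheorem}: the ``$mS$''-term is $m$ times the corresponding $2\times 2$ sub-block of $S_3(g)$, the oscillations $\sin(m\alpha_i)$ stem from the single mode of $u$ (from $z$ and the character), and the oscillations $\sin(m\beta_k)$ stem from the rotation angles of $\rot(t)\in\O(d_1)\oplus\{I_{d_2}\}$, which for $d_1\ge 1$ need not be rational multiples of $\pi$. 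Lemma~\ref{Lemma:helpTheorem} then gives $\norm{S_3(g)}\le C\max_{\abs m\le N}\norm{e(g,t^{m})}$ (after summing over coordinate pairs and the chosen $t$), hence $\norm{S_3(g)}^2\le C\,\dist^2(u(g\fdot)|_{\widetilde\RR},\Uiso{\widetilde\RR})$ since the relevant group elements lie in $\widetilde\RR$; averaging over $g\in\CC_N$ completes the proof.

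\emph{Main obstacle.} The genuine difficulty is exactly the uniformity in the Fourier mode --- equivalently, the stability of the estimate as the period (wavelength) of $u$ tends to infinity. This cannot be obtained by a naive compactness argument on the torus of wave vectors, both because the point group of the underlying space group $\SG$ couples the modes along its (finite) orbits and because of the incommensurable screw angles present when $d_1\ge 1$; Lemma~\ref{Lemma:helpTheorem} is designed precisely so that its window length $N$ depends only on the intrinsic rotation angles of $\G$, which is what allows $\widetilde\RR$ to be fixed once and for all. The remaining points are routine: checking that the coset-wise Fourier decomposition splits the seminorms as claimed, and matching the fitted relations to the exact form of the Lemma.
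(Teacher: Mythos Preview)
Your proposal is correct and follows essentially the same strategy as the paper: enlarge $\RR$ via Theorem~\ref{Theorem:equivalence} to contain sufficiently many powers of the generators of $\T^{m_0}$, then use Plancherel on $\T^{m_0}$-cosets together with Lemma~\ref{Lemma:helpTheorem} (with the $\beta_k$'s being the rotation angles of the $\rot(t_i)$) to bound the $\Skew(d_2)$-block of the best-fitting infinitesimal rotation uniformly in the Fourier mode. The only cosmetic differences are that the paper passes to the equivalent $\nabla$-seminorms via Propositions~\ref{Proposition:nablanormequivalent-ohne-0} and~\ref{Proposition:nablanormequivalent-0} rather than working with $\norm\fdot_\RR$ directly, and its enlargement $\RR_0=\{t_i^n:\abs n\le N_0\}$ does not involve the extra $\RR''$-patches you propose.
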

\begin{proof}
First we show the trivial inequality $\norm\fdot_{\RR}\le\zeronorm\fdot{\RR}$:\\
Let $u\in\UPer$.
Let $N\in \MM$ be such that $u$ is $\T^N$-periodic.
Since $\zeroUiso\RR\subset\Uiso\RR$, we have
\begin{align*}
\norm u_{\RR}^2&=\frac1{\abs{\CC_N}}\sum_{g\in\CC_N}\norm{\pi_{\Uiso\RR}(u(g\fdot)|_\RR)}^2\\
&\le\frac1{\abs{\CC_N}}\sum_{g\in\CC_N}\norm{\pi_{\zeroUiso\RR}(u(g\fdot)|_\RR)}^2\\
&=\zeronorm u{\RR}^2.
\end{align*}
Now we show with the aid of the Plancherel formula that there exists a constant $c>0$ such that $\norm\fdot_{\RR}\ge c\zeronorm\fdot{\RR}$:\\
We choose $m=m_0$ such that $\MM=m\N$ and the group $\T^m$ is isomorphic to $\Z^{d_2}$, see Section~\ref{subsection:structure}. In particular, there exist $t_1,\dots,t_{d_2}\in\T^m$ such that $\{t_1,\dots,t_{d_2}\}$ generates $\T^m$.
Since $\rot(\T^m)$ is a subgroup of $\{I_{d-\daff}\}\oplus\O(\daff-d_2)\oplus\{I_{d_2}\}$ and the elements $t_1,\dots,t_{d_2}$ commute, by Theorem~\ref{Theorem:simultaneousdiagonalisation} we may without loss of generality (by a coordinate transformation) assume that for all $i\in\{1,\dots,d_2\}$ there exist an integer $q_i\in\{0,\dots,\floor{(\daff-d_2)/2}\}$, a vector $v_i\in\{\pm1\}^{\daff-d_2-2q_i}$ and angles $\theta_{i,1},\dots,\theta_{i,q_i}\in[0,2\pi)$ such that
\[\rot(t_i)=I_{d-\daff}\oplus\diag(v_i)\oplus R(\theta_{i,q_i})\oplus\dots\oplus R(\theta_{i,1})\oplus I_{d_2}.\]
By Lemma~\ref{Lemma:helpTheorem} there exists an integer $N_0\in\N$ such that
\begin{equation}\label{eq:helpTheorem}
\max_{n\in\{1,\dots,N_0\}}\norm[\bigg]{a\otimes(\sin(n\alpha_1),\dots,\sin(n\alpha_{d_2}))-\sum_{i=1}^{d_2}\sum_{j=1}^{q_i}\sin(n\theta_{i,j})B_{i,j}-nS}\ge \norm S
\end{equation}
for all $a\in\C^{d_2}$, $\alpha_1,\dots,\alpha_{d_2}\in[0,2\pi)$, $B_{1,1},\dots,B_{d_2,q_{d_2}}\in\C^{d_2\times d_2}$, and $S\in\Skew(d_2,\C)$.
Let $\RR_0=\set{t_i^n}{i\in\{1,\dots,d_2\},n\in \{\pm1,\dots,\pm N_0\}}\subset \T^m$.
Since $\zeronorm\fdot{\RR\cup\RR_0\G_{x_0}}\ge\zeronorm\fdot\RR$ and by Theorem~\ref{Theorem:equivalence}, we may without loss of generality assume that $\RR_0\G_{x_0}\subset\RR$.
For all finite sets $\RR'\subset\G$ we define the map
\begin{align*}
g_{\RR'}\colon&\Skew(d,\C)\to\C^{d\times\abs{\RR'}}\\
&S\mapsto(\rot(h)^TS(h\cdot x_0-x_0))_{h\in\RR'}.
\end{align*}

Recall the definition of the dual space $\dual{\T^m}$ from Section~\ref{subsection:structure}. Now we show that there exists a constant $c_0>0$ such that
\begin{equation}\label{eq:Theoremstar}
\norm[\big]{\parens[\big]{\chi(h)^{-1}v-\rot(h)^Tv}_{h\in\RR_0}-g_{\RR_0}(S)}\ge c_0\norm{S_3}
\end{equation}
for all $\chi\in\dual{\T^m}$, $v\in\C^d$ and $S=\parens[\Big]{\begin{smallmatrix}S_1&-S_2^T\\S_2 & S_3\end{smallmatrix}}\in\Skew(d_1+d_2,\C)$.

Writing $v=\parens[\Big]{\begin{matrix}v_1\\v_2\end{matrix}}\in\C^{d_1+d_2}$ we have
\begin{align}\label{eq:lhs}
\text{LHS}&:=\norm[\Big]{\parens[\Big]{\chi(h)^{-1}v-\rot(h)^Tv}_{h\in\RR_0}-g_{\RR_0}(S)}\nonumber\\
&\ge\norm[\Big]{\parens[\Big]{\chi(h^{-1})v_2-v_2-(S_2,S_3)(h\gdot x_0-x_0)}_{h\in\RR_0}}\nonumber\\
\begin{split}
&\ge\frac1{\sqrt2}\parens[\bigg]{\norm[\Big]{\parens[\Big]{\chi(t_i^{-n})v_2-v_2-(S_2,S_3)(t_i^n\gdot x_0-x_0)}_{i\in\{1,\dots,d_2\}}}\\
&\qquad+\norm[\Big]{\parens[\Big]{\chi(t_i^n)v_2-v_2-(S_2,S_3)(t_i^{-n}\gdot x_0-x_0)}_{i\in\{1,\dots,d_2\}}}}\\
\end{split}\nonumber\\
&\ge\frac1{\sqrt2}\norm[\Big]{\parens[\Big]{(\chi(t_i^n)-\chi(t_i^{-n}))v_2+(S_2,S_3)(t_i^n\gdot x_0-t_i^{-n}\gdot x_0)}_{i\in\{1,\dots,d_2\}}}
\end{align}
for all $n\in\{1,\dots,N_0\}$.
For all $j\in\{1,\dots,d_2\}$ we define $\alpha_j\in[0,2\pi)$ by $\euler^{\iu\alpha_j}=\chi(t_j)$.
Let $\xzeroone\in\R^{d_1}$ and $\xzerotwo\in\R^{d_2}$ be such that $x_0=\parens[\Big]{\begin{matrix}\xzeroone\\ \xzerotwo\end{matrix}}$.
For all $j\in\{1,\dots,\max\{q_1,\dots,q_{d_2}\}\}$ we define $n_j=d_1-2j$, $m_j=2j-2$ and
\[b_j=S_2(0_{n_j,n_j}\oplus(\begin{smallmatrix}0&-2\\2&0\end{smallmatrix})\oplus 0_{m_j,m_j})\xzeroone\in\C^{d_2}.\]
Let $\transtwo\colon\T^m\to\R^{d_2}$ be uniquely defined by the condition $\trans(t)=\parens[\Big]{\begin{matrix}0_{d_1}\\ \transtwo(t)\end{matrix}}$ for all $t\in\T^m$.
Then for all $i\in\{1,\dots,d_2\}$ and $n\in\{1,\dots,N_0\}$ we have
\begin{align*}
(S_2&,S_3)(t_i^{n}\gdot x_0-t_i^{-n}\gdot x_0)\\
&=S_2\parens[\big]{0_{d_1-2q_i,d_1-2q_i}\oplus(R(n\theta_{i,q_i})-R(-n\theta_{i,q_i}))\oplus\dots\oplus(R(n\theta_{i,1})-R(-n\theta_{i,1}))}\xzeroone\\
&\qquad +2nS_3\transtwo(t_i)\\
&=\sum_{j=1}^{q_i}\sin(n\theta_{i,j})S_2\parens[\big]{0_{n_j,n_j}\oplus(\begin{smallmatrix}0&-2\\2&0\end{smallmatrix})\oplus 0_{m_j,m_j}}\xzeroone+2nS_3\transtwo(t_i)\\
&=\sum_{j=1}^{q_i}\sin(n\theta_{i,j})b_j+2nS_3\transtwo(t_i).
\end{align*}
For all $i\in\{1,\dots,d_2\}$ and $j\in\{1,\dots,q_i\}$ we define $B_{i,j}=-b_j\otimes e_i^T\in\C^{d_2\times d_2}$.
Let $T=2\parens[\big]{\transtwo(t_1),\dots,\transtwo(t_{d_2})}\in\GL(d_2)$.
By equation~\eqref{eq:lhs} for all $n\in\{1,\dots,N_0\}$ we have
\begin{align*}
\text{LHS}&\ge\frac1{\sqrt 2}\norm[\Big]{2\iu v_2\otimes(\sin(n\alpha_1),\dots,\sin(n\alpha_{d_2}))-\sum_{i=1}^{d_2}\sum_{j=1}^{q_i}\sin(n\theta_{i,j})B_{i,j}+nS_3T}\\
&\ge c_1\norm[\Big]{(2\iu T^Tv_2)\otimes(\sin(n\alpha_1),\dots,\sin(n\alpha_{d_2}))-\sum_{i=1}^{d_2}\sum_{j=1}^{q_i}\sin(n\theta_{i,j})T^TB_{i,j}+nT^TS_3T},
\end{align*}
where $c_1=\sigma_{\min}(T^{-T})/\sqrt2>0$, $\sigma_{\min}(M)$ denotes the minimum singular value of a matrix $M$ and we used Theorem~\ref{Theorem:singularvalueinequality} in the last step.
With equation~\eqref{eq:helpTheorem} it follows
\[\text{LHS}\ge c_1\norm{-T^TS_3T}\ge c_0\norm{S_3},\]
where $c_0=\sigma_{\min}(T)^2c_1>0$.

By Propositions~\ref{Proposition:nablanormequivalent-ohne-0} and~\ref{Proposition:nablanormequivalent-0} it suffices to show that there exists a constant $c>0$ such that $\nablanorm\fdot\RR\ge c\nablazeronorm\fdot\RR$.
Let $u\in\UPer$.
Let $N\in \MM$ be such that $u$ is $\T^N$-periodic.
In particular, $m$ divides $N$.
Let $v\colon\G\to\Skew(d)$ be $\T^N$-periodic such that $\pi_{\Urot\RR}(\nabla_\RR u(g))=\nabla_\RR u(g)-g_\RR\circ v(g)$ for all $g\in\G$.
Let
\[v_1\colon\G\to\set[\bigg]{\parens[\bigg]{\begin{matrix}S_1&S_2\\-S_2^T&0\end{matrix}}}{S_1\in\Skew(d_1),S_2\in\R^{d_1\times d_2}}\]
and 
\[v_2\colon\G\to\set{0_{d_1,d_1}\oplus S}{S\in\Skew(d_2)}\]
such that $v=v_1+v_2$.
For all $g\in\CC_m$ we define the functions
\begin{align*}
u_g&\colon\T^m\to\C^d,\;t\mapsto\proj{u}(gt)\\
v_g&\colon\T^m\to\Skew(d,\C),\; t\mapsto v(gt)\\
v_{1,g}&\colon\T^m\to\Skew(d,\C),\;t\mapsto v_1(gt)
\qquad\text{and}\\
v_{2,g}&\colon\T^m\to\Skew(d,\C),\;t\mapsto v_2(gt).
\end{align*}
Let $\EE=\set{\chi\in\dual{\T^m}}{\chi\text{ is periodic}}$.
For all $g\in\CC_m$ and $\chi\in\EE$ it holds
\begin{align*}
&\fourier{v_g}(\chi)=\fourier{v_{1,g}}(\chi)+\fourier{v_{2,g}}(\chi),\\
&\fourier{v_{1,g}}(\chi)\in\set[\bigg]{\parens[\bigg]{\begin{matrix}S_1&S_2\\-S_2^T&0\end{matrix}}}{S_1\in\Skew(d_1,\C),S_2\in\C^{d_1\times d_2}}
\shortintertext{and}
&\fourier{v_{2,g}}(\chi)\in\set{0_{d_1,d_1}\oplus S}{S\in\Skew(d_2,\C)}.
\end{align*}
We have
\begin{align}\label{eq:as}
\nablanorm u\RR^2&=\frac1{\abs{\CC_N}}\sum_{(g,t)\in\CC_m\times(\T^m\cap\CC_N)}\norm{\pi_{\Urot\RR}(\nabla_\RR u(gt))}^2\nonumber\\
&=\frac1{\abs{\CC_N}}\sum_{g\in\CC_m}\sum_{t\in\T^m\cap\CC_N}\norm{\nabla_\RR u(gt)-g_\RR\circ v(gt)}^2\nonumber\\
&\ge\frac1{\abs{\CC_N}}\sum_{g\in\CC_m}\sum_{t\in\T^m\cap\CC_N}\norm{\nabla_{\RR_0\G_{x_0}}u(gt)-g_{\RR_0\G_{x_0}}\circ v(gt)}^2\nonumber\\
&\ge\frac1{\abs{\CC_N}}\sum_{g\in\CC_m}\sum_{t\in\T^m\cap\CC_N}\norm[\big]{\parens[\big]{u_g(th)-\rot(h)^Tu_g(t)}_{h\in\RR_0}-g_{\RR_0}\circ v_g(t)}^2\nonumber\\
&=\frac1{\abs{\CC_N}}\sum_{g\in\CC_m}\abs{\T^m\cap\CC_N}\sum_{\chi\in\EE}\norm[\big]{\parens[\big]{\chi(h)^{-1}\fourier{u_g}(\chi)-\rot(h)^T\fourier{u_g}(\chi)}_{h\in\RR_0}-g_{\RR_0}\circ\fourier{v_g}(\chi)}^2\nonumber\\
&\ge\frac{c_0^2}{\abs{\CC_N}}\sum_{g\in\CC_m}\abs{\T^m\cap\CC_N}\sum_{\chi\in\EE}\norm{\fourier{v_{2,g}}(\chi)}^2\nonumber\\
&=\frac{c_0^2}{\abs{\CC_N}}\sum_{g\in\CC_m}\sum_{t\in\T^m\cap\CC_N}\norm{v_{2,g}(t)}^2\nonumber\\
&=\frac{c_0^2}{\abs{\CC_N}}\sum_{(g,t)\in\CC_m\times(\T^m\cap\CC_N)}\norm{v_2(gt)}^2\nonumber\\
&=c_0^2\norm{v_2}_2^2.
\end{align}
In the first and last step we used that the set $\bigcup_{(g,t)\in\CC_m\times(\T^m\cap\CC_N)}\{gt\}$ is a representation set of $\G/\T^N$.
In the fifth and seventh step we used Proposition~\ref{Proposition:TFplancherelmatrix} for the group $\T^m$ and $\T^N$-periodic functions and Lemma~\ref{Lemma:PeriodicTranslation}.
Note that $\T^m\cap\CC_N$ is a representation set of $\T^m/\T^N$.
In the sixth step we used \eqref{eq:Theoremstar}.
Let $C=\abs\RR\max\set{\norm{h\cdot x_0-x_0}}{h\in\RR}$.
We have
\begin{align}\label{eq:ar}
\nablanorm u\RR^2&=\frac1{\abs{\CC_N}}\sum_{g\in\CC_N}\norm{\nabla_\RR u(g)-g_\RR\circ v(g)}^2\nonumber\\
&\ge\frac1{\abs{\CC_N}}\sum_{g\in\CC_N}\parens[\bigg]{\frac12\norm{\nabla_\RR u(g)-g_\RR\circ v_1(g)}^2-\norm{g_\RR\circ v_2(g)}^2}\nonumber\\
&\ge\frac1{\abs{\CC_N}}\sum_{g\in\CC_N}\parens[\bigg]{\frac12\norm{\pi_{\zeroUrot\RR}(\nabla_\RR u(g)))}^2-C\norm{v_2(g)}^2}\nonumber\\
&=\frac12\nablazeronorm u\RR^2-C\norm{v_2}_2^2,
\end{align}
where in the second step we used that $(a-b)^2\ge a^2/2-b^2$ for all $a,b\ge0$.
Let $c_2=\min\{1/2,c_0^2/(2C)\}$.
By \eqref{eq:as} and \eqref{eq:ar} we have
\begin{align*}
\nablanorm u\RR^2&\ge\frac12\nablanorm u\RR^2+c_2\nablanorm u\RR^2\\
&\ge\frac{c_0^2}2\norm{v_2}_2^2+c_2\parens[\Big]{\frac12\nablazeronorm u\RR^2-C\norm{v_2}_2^2}\\
&\ge\frac{c_2}2\nablazeronorm u\RR^2.
\end{align*}
Thus, we have $\nablanorm\fdot\RR\ge\sqrt{c_2/2}\nablazeronorm\fdot\RR$.
\end{proof}

\subsection{Seminorm kernels}
\label{subsection:seminorms-kernels}
%
It is interesting to explicitly describe the kernel of the seminorms that measure the rigidity of deformations as this entails a characterization of fully rigid deformations.  

Recall from Definition~\ref{Definition:UtransUrot} that $\UTrans$ is the vector space of displacements corresponding to translations. We now introduce the vector space $\URot$ which corresponds to infinitesimal rotations of $\G\cdot x_0$ about the affine subspace $x_0+\{0_{d_1}\}\times\R^{d_2}$.
\begin{Definition}\label{Definition:Unew}
For all $\RR\subset\G$ such that $\RR\G_{x_0}=\RR$ we define the vector spaces
\begin{align*}
&\Unewrot{\RR}\\
&:=\set[\Big]{u\colon\RR\to\R^d}{\exists\+ S\in\Skew(d_1)\;\forall g\in\RR:\rot(g)\proj{u}(g)=(S\oplus0_{d_2,d_2})(g\gdot x_0-x_0)}\\
&\subset\zeroUrot\RR\cap L^\infty(\G,\R^{d_1}\times\{0_{d_2}\})
\shortintertext{and}
&\Unewiso{\RR}:=\Utrans{\RR}+\Unewrot{\RR}\subset\zeroUiso\RR\cap L^\infty(\G,\R^d)
\end{align*}
with $\Utrans{\RR}$ as in Definition~\ref{Definition:UtransUrot}. In case $\RR=\G$ we suppress the argument $\RR$ for brevity and simply write $\UTrans$, $\URot$ and $\UIso$, respectively. 
\end{Definition}
\begin{Remark}
We have $\URot\subset\zeroUrot\G$.
If $d_1\ge1$ and $d_2\ge1$, then we have $\URot\subsetneq\zeroUrot\G$.
Moreover, in general we have $\UTrans\not\subset\UPer$ and $\URot\not\subset\UPer$.
For example let $\alpha\in\R\setminus(2\pi\Q)$, $R(\alpha)$ be the rotation matrix by the angle $\alpha$, $\G=\angles{\gplus{R(\alpha)}{\iso{I_1}{1}}}<\E(3)$ and $x_0=e_1$.
Then we have $\dim(\zeroUrot\G)=3$, $\dim(\URot)=1$ and $\dim(\URot\cap\UPer)=0$.
Moreover, we have $\dim(\UTrans)=3$ and $\dim(\UTrans\cap\UPer)=1$.
\end{Remark}
\begin{Example}\label{Example:SpaceGroupURot}
Suppose that $\G_{x_0}$ is trivial. 
If $d_1=1$ or $\daff=d_2$, then we have $\URot=\{0\}$.
In particular, if $\G$ is a space group, then we have $\URot=\{0\}$.
\end{Example}
The following proposition characterizes the vector spaces $\Utrans\RR$, $\Urot\RR$, $\zeroUrot\RR$, $\Unewrot\RR$, $\Uiso\RR$, $\zeroUiso\RR$ and $\Unewiso\RR$ for appropriate $\RR\subset\G$.
In particular, the proposition characterizes $\UTrans$, $\URot$ and $\UIso$.

In the following two results we write $\pi\colon\{u\colon\RR\to\R^d\}\to\{u\colon\RR\to\R^d\}$, $\pi(u) = \proj{u}$ for the projection defined by \eqref{eq:pi-proj}. Note that, by construction, all the sets $\Utrans\RR$, $\Urot\RR$, $\zeroUrot\RR$, $\Unewrot\RR$, $\Uiso\RR$, $\zeroUiso\RR$ and $\Unewiso\RR$ are invariant under $\pi$. 
\begin{Proposition}\label{Proposition:UtransUrotbig}
Suppose that $\RR\subset\G$ is such that $\RR\G_{x_0}=\RR$, $\id\in\RR$ and $\aff(\RR\cdot x_0)=\aff(\G\cdot x_0)$. Then the maps
\begin{align*}
&\begin{aligned}\phi_1\colon&\R^d\to\pi({\Utrans\RR})\\
&a\mapsto\parens[\big]{\RR\to\R^d,g\mapsto\rot(g)^Ta},\end{aligned}\\
&\begin{aligned}\phi_2\colon&\R^{d_3\times\daff}\times\Skew(\daff)\to\pi(\Urot\RR)\\
&(A_1,A_2)\mapsto \parens[\Big]{\RR\to\R^d,g\mapsto\rot(g)^T\parens[\Big]{\begin{smallmatrix}0&A_1\\-A_1^T&A_2\end{smallmatrix}}(g\gdot x_0-x_0)},\end{aligned}\\
&\begin{aligned}\phi_3\colon&\R^{d_3\times d_4}\times\R^{d_3\times d_2}\times\Skew(d_4)\times\R^{d_4\times d_2}\to\pi(\zeroUrot\RR)\\
&(A_1,A_2,A_3,A_4)\mapsto \parens[\bigg]{\RR\to\R^d,g\mapsto\rot(g)^T\parens[\bigg]{\begin{smallmatrix}0&A_1&A_2\\-A_1^T&A_3&A_4\\-A_2^T&-A_4^T&0\end{smallmatrix}}(g\gdot x_0-x_0)},\end{aligned}
\shortintertext{and}
&\begin{aligned}\phi_4\colon&\R^{d_3\times d_4}\times\Skew(d_4)\to\pi(\Unewrot\RR)\\
&(A_1,A_2)\mapsto \parens[\Big]{\RR\to\R^d,g\mapsto\rot(g)^T\parens[\Big]{\parens[\Big]{\begin{smallmatrix}0&A_1\\-A_1^T&A_2\end{smallmatrix}}\oplus0_{d_2,d_2}}(g\gdot x_0-x_0)}\end{aligned}
\end{align*}
are isomorphisms, where $d_3=d-\daff$, $d_4=\daff-d_2$. 
In particular, we have
\begin{align*}
\dim(\pi(\Utrans{\RR}))&=d\\
\dim(\pi(\Urot{\RR}))&=\daff(d-\tfrac12\daff-\tfrac12),\\
\dim(\pi(\zeroUrot{\RR}))&=d_3\daff+\tfrac12d_4(\daff+d_2-1)
\text{ and}\\
\dim(\pi(\Unewrot\RR))&=d_4(d_3+d_1-1)/2.
\end{align*}
Moreover we have 
\begin{align*}
\pi(\Uiso\RR)&=\pi(\Utrans\RR)\oplus\pi(\Urot\RR),\\
\pi(\zeroUiso\RR)&=\pi(\Utrans\RR)\oplus\pi(\zeroUrot\RR)
\text{ and}\\
\pi(\Unewiso\RR)&=\pi(\Utrans\RR)\oplus\pi(\Unewrot\RR).
\end{align*}
\end{Proposition}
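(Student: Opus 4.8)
I would establish all four isomorphism claims by a single mechanism. Put $d_3=d-\daff$ and $d_4=\daff-d_2$, so $d_1=d_3+d_4$ and $\daff=d_4+d_2$, and record two consequences of the standing normalization. First, since $x_0=\id\cdot x_0$ lies in $\aff(\G\cdot x_0)=\{0_{d_3}\}\times\R^{\daff}$, every vector $g\cdot x_0-x_0$ with $g\in\G$ lies in the subspace $V:=\{0_{d_3}\}\times\R^{\daff}$; second, since $\id\in\RR$ and $\aff(\RR\cdot x_0)=\aff(\G\cdot x_0)$, the family $(g\cdot x_0-x_0)_{g\in\RR}$ spans $V$. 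For any linear subspace $W\subset\Skew(d)$ consider the linear map $\Psi_W\colon W\to\{u\colon\RR\to\R^d\}$, $S\mapsto\bigl(g\mapsto\rot(g)^TS(g\cdot x_0-x_0)\bigr)_{g\in\RR}$. As each $\rot(g)$ is invertible and the difference vectors span $V$, one gets $\ker\Psi_W=\{S\in W:S|_V=0\}$; and since $S$ is skew, $S|_V=0$ forces the last $\daff$ columns and rows of $S$ to vanish, so $\ker\Psi_W=W\cap\bigl(\Skew(d_3)\oplus 0_{\daff,\daff}\bigr)$. Directly from Definition~\ref{Definition:UtransUrot} and the definitions of $\zeroUrot\RR$ and $\Unewrot\RR$, the image of $\Psi_W$ equals $\Urot\RR$ for $W=\Skew(d)$, equals $\zeroUrot\RR$ for $W=\Skew_{0,d_2}(d)$, and equals $\Unewrot\RR$ for $W=\{S\oplus 0_{d_2,d_2}:S\in\Skew(d_1)\}$.

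For each of these three $W$ I would then exhibit the explicit linear complement of $\ker\Psi_W$ inside $W$ consisting of those matrices in $W$ whose top-left $d_3\times d_3$ block vanishes: for $\Skew(d)$ this complement is parametrized by $(A_1,A_2)\mapsto\bigl(\begin{smallmatrix}0&A_1\\-A_1^T&A_2\end{smallmatrix}\bigr)$ with $A_1\in\R^{d_3\times\daff}$, $A_2\in\Skew(\daff)$, and for $\Skew_{0,d_2}(d)$ and $\Skew(d_1)\oplus 0_{d_2,d_2}$ by the analogous block forms occurring in $\phi_3$ and $\phi_4$. That writing a skew matrix of $W$ as (its top-left $d_3\times d_3$ block, padded with zeros) plus (the remainder) respects $W$ and gives a direct decomposition $W=\ker\Psi_W\oplus(\text{complement})$ is immediate from the block shapes of $\Skew(d)$, $\Skew_{0,d_2}(d)$ and $\Skew(d_1)\oplus 0_{d_2,d_2}$. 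Since these parametrizations are precisely the maps composed with $\rot(\cdot)^T(\cdot)(\cdot\,x_0-x_0)$ in the definitions of $\phi_2,\phi_3,\phi_4$, each $\phi_i$ ($i=2,3,4$) is the composition of a linear isomorphism onto the complement with $\Psi_W$ restricted to that complement, hence a linear isomorphism onto $\operatorname{im}\Psi_W$, i.e.\ onto $\Urot\RR$, $\zeroUrot\RR$, $\Unewrot\RR$ respectively. For $\phi_1$ one argues directly: $a\mapsto(g\mapsto\rot(g)^Ta)$ is linear, is onto $\Utrans\RR$ because $\rot(g)$ is invertible, and is injective since evaluating at $\id\in\RR$ recovers $a$.

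The dimension formulas follow by counting the free parameters, giving $\dim\Utrans\RR=d$, $\dim\Urot\RR=d_3\daff+\binom{\daff}{2}$, $\dim\zeroUrot\RR=d_3d_4+d_3d_2+\binom{d_4}{2}+d_4d_2$ and $\dim\Unewrot\RR=d_3d_4+\binom{d_4}{2}$; each rearranges into the asserted closed form using $d=d_3+\daff$, $\daff=d_4+d_2$ and $d_1=d_3+d_4$, which is a routine computation. Finally $\Uiso\RR=\Utrans\RR+\Urot\RR$, $\zeroUiso\RR=\Utrans\RR+\zeroUrot\RR$ and $\Unewiso\RR=\Utrans\RR+\Unewrot\RR$ hold by definition, and each sum is direct: if $u$ belongs to both summands then $\rot(g)u(g)$ is simultaneously a constant $a$ and of the form $S(g\cdot x_0-x_0)$ for a skew matrix $S$, and evaluating at $g=\id$ yields $a=0$, hence $u=0$.

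The argument involves no deep obstacle. The one place demanding care is the bookkeeping of the three nested block decompositions $d=d_1+d_2=d_3+d_4+d_2$ and the verification that, for each $W$, the displayed ``vanishing top-left block'' subspace is a genuine complement of $\ker\Psi_W$ in $W$ rather than merely a subspace of complementary dimension; once this is in place, the rest is linear algebra and the dimension arithmetic.
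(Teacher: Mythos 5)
Your proposal is correct and follows essentially the same route as the paper: both arguments rest on the facts that $(g\cdot x_0-x_0)_{g\in\RR}$ spans $\{0_{d_3}\}\times\R^{\daff}$ (the content of Lemma~\ref{Lemma:MatrixRank}) and that a skew matrix vanishing on this subspace must lie in $\Skew(d_3)\oplus 0_{\daff,\daff}$, which is exactly how the paper's injectivity and surjectivity arguments for $\phi_3$ (and the analogous $\phi_2$, $\phi_4$) proceed. Your packaging as ``kernel and explicit complement of a single map $\Psi_W$'' merely unifies the three cases the paper treats ``analogously,'' and the dimension arithmetic and the direct-sum argument via evaluation at $\id$ coincide with the paper's.
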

We include the elementary proof for the sake of completeness. 
\begin{proof}
Since $\rot(\id)=I_d$, the map $\phi_1$ is injective and thus, an isomorphism.

Now we prove that $\phi_3$ is an isomorphism.
The map $\phi_3$ is well-defined and linear.
First we show that $\phi_3$ is surjective.
Let $u\in\pi(\zeroUrot\RR)$.
There exist some $A_1\in\Skew(d_1)$ and $A_2\in\R^{d_1\times d_2}$ such that
\[\rot(g)u(g)=\parens[\Big]{\begin{smallmatrix}A_1&A_2\\-A_2^T&0\end{smallmatrix}}(g\gdot x_0-x_0)\quad\text{for all }g\in\G.\]
Let $A_3\in\Skew(d_3)$, $A_4\in\R^{d_3\times d_4}$, $A_5\in\Skew(d_4)$, $A_6\in\R^{d_3\times d_2}$ and $A_7\in\R^{d_4\times d_2}$ be such that
\[A_1=\parens[\Big]{\begin{smallmatrix}A_3&A_4\\-A_4^T&A_5\end{smallmatrix}}\text{ and }A_2=\parens[\Big]{\begin{smallmatrix}A_6\\ A_7\end{smallmatrix}}.\]
Since $\G\gdot x_0\subset\{0_{d_3}\}\times\R^{\daff}$, we have $\phi_3(A_4,A_6,A_5,A_7)=u$.

Now we show that $\phi_3$ is injective.
Let $A_1,B_1\in\R^{d_3\times d_4}$, $A_2,B_2\in\R^{d_3\times d_2}$, $A_3,B_3\in\Skew(d_4)$ and $A_4,B_4\in\R^{d_4\times d_2}$ be such that $\phi_3(A_1,A_2,A_3,A_4)=\phi_3(B_1,B_2,B_3,B_4)$.
Let $\RR'\subset\RR$ be finite with $\id\in\RR'$ and $\aff(\RR'\cdot x_0)=\aff(\G\cdot x_0)$.
By Lemma~\ref{Lemma:MatrixRank} there exists some $C\in\R^{\daff\times\abs{\RR'}}$ of rank $\daff$ such that
\[(g\gdot x_0-x_0)_{g\in\RR'}=\parens[\Big]{\begin{smallmatrix}0\\C\end{smallmatrix}}.\]
The identity $\phi_3(A_1,A_2,A_3,A_4)=\phi_3(B_1,B_2,B_3,B_4)$ implies
\[\parens[\bigg]{\begin{smallmatrix}0&A_1&A_2\\-A_1^T&A_3&A_4\\-A_2^T&-A_4^T&0\end{smallmatrix}}(g\gdot x_0-x_0)=\parens[\bigg]{\begin{smallmatrix}0&B_1&B_2\\-B_1^T&B_3&B_4\\-B_2^T&-B_4^T&0\end{smallmatrix}}(g\gdot x_0-x_0)\]
for all $g\in\RR$ and in particular, we have
\[\parens[\Big]{\begin{smallmatrix}(\begin{smallmatrix}A_1&A_2\end{smallmatrix})C\\(\begin{smallmatrix}A_3&A_4\end{smallmatrix})C\end{smallmatrix}}=\parens[\Big]{\begin{smallmatrix}(\begin{smallmatrix}B_1&B_2\end{smallmatrix})C\\(\begin{smallmatrix}B_3&B_4\end{smallmatrix})C\end{smallmatrix}}.\]
Since the rank of $C$ is equal to the number of its rows, we have $A_i=B_i$ for all $i\in\{1,\dots,4\}$.

The proofs that $\phi_2$ and $\phi_4$ are isomorphisms are analogous.

For all $u\in\pi(\Urot\RR)$ we have $u(\id)=0$ and for all $u\in\pi(\Utrans\RR)$ and $g\in\RR$ we have $\rot(g)u(g)=u(\id)$.
This implies $\pi(\Utrans\RR)\cap\pi(\Urot\RR)=\{0\}$ and thus $\pi(\Uiso\RR)=\pi(\Utrans\RR)\oplus\pi(\Urot\RR)$.
Analogously, we have $\pi(\zeroUiso\RR)=\pi(\Utrans\RR)\oplus\pi(\zeroUrot\RR)$ and $\pi(\Unewiso\RR)=\pi(\Utrans\RR)\oplus\pi(\Unewrot\RR)$.
\end{proof}
\begin{Lemma}\label{Lemma:finiterotG}
If the group $\rot(\G)$ is finite, then we have $\pi(\UIso)\subset\UPer$.
\end{Lemma}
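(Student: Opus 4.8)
The plan is to exhibit a single period $N\in\MM$ that works for every element of $\UIso$ simultaneously; the natural candidate is $N=m_0\abs{\rot(\G)}$. Since $N\in m_0\N=\MM$, it suffices to prove that every $u\in\UIso$ is $\T^N$-periodic. The whole argument rests on one elementary observation: for this choice of $N$ the linear component map is trivial on $\T^N$, that is, $\rot(t)=I_d$ for all $t\in\T^N$. Indeed, each element of $\T^N$ has the form $s^N$ with $s\in\T$, and since $\rot(s)$ lies in the finite group $\rot(\G)$ its order divides $\abs{\rot(\G)}$, hence divides $N$; therefore $\rot(s^N)=\rot(s)^N=I_d$. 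In particular every $t\in\T^N$ is a pure translation, and since $\G<\gplus{\O(d_1)}{\SG}$ its translation component satisfies $\trans(t)\in\{0_{d_1}\}\times\R^{d_2}$.

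Before the main computation I would record two structural facts about linear components. First, as $\G<\gplus{\O(d_1)}{\SG}$, every $\rot(g)$ is block diagonal for the splitting $\R^d=\R^{d_1}\times\R^{d_2}$, and hence maps $\{0_{d_1}\}\times\R^{d_2}$ into itself. Second, any matrix of the form $S\oplus0_{d_2,d_2}$ with $S\in\Skew(d_1)$ annihilates $\{0_{d_1}\}\times\R^{d_2}$. Consequently, writing $T=S\oplus0_{d_2,d_2}$, one has $T\rot(g)w=0$ for every $g\in\G$ and every $w\in\{0_{d_1}\}\times\R^{d_2}$.

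Now take $u\in\UIso=\UTrans+\URot$. By Definition~\ref{Definition:Unew} there are $a\in\R^d$ and $S\in\Skew(d_1)$ with $\rot(g)u(g)=a+T(g\gdot x_0-x_0)$ for all $g\in\G$, where $T=S\oplus0_{d_2,d_2}$; equivalently $u(g)=\rot(g)^T\bigl(a+T(g\gdot x_0-x_0)\bigr)$. Fix $g\in\G$ and $t\in\T^N$. From $\rot(t)=I_d$ we get $\rot(gt)=\rot(g)$, and since $t$ is the translation by $\trans(t)$ a one-line computation gives $(gt)\gdot x_0-x_0=(g\gdot x_0-x_0)+\rot(g)\trans(t)$. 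Substituting,
\[
u(gt)=\rot(g)^T\bigl(a+T(g\gdot x_0-x_0)+T\rot(g)\trans(t)\bigr)=u(g)+\rot(g)^TT\rot(g)\trans(t).
\]
By the second paragraph $T\rot(g)\trans(t)=0$, because $\trans(t)\in\{0_{d_1}\}\times\R^{d_2}$. Hence $u(gt)=u(g)$ for all $g\in\G$ and $t\in\T^N$, i.e.\ $u$ is $\T^N$-periodic, so $u\in\UPer$.

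I do not expect a real obstacle here: the only point requiring a moment's thought is the choice of the period $N$, and everything else is a direct unwinding of the definitions. It is worth stressing that finiteness of $\rot(\G)$ enters only through the triviality of $\rot$ on some $\T^N$ and cannot be dropped — the Remark following Definition~\ref{Definition:Unew} shows that already $\UTrans\not\subset\UPer$ for a structure generated by an irrational screw motion.
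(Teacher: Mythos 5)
Your proof is correct and follows essentially the same route as the paper's: choose $N=m_0\abs{\rot(\G)}$, observe that $\rot$ is trivial on $\T^N$ because element orders in the finite group $\rot(\G)$ divide $\abs{\rot(\G)}$, and then use the block-diagonal form of $\rot(g)$ together with $\trans(\T^N)\subset\{0_{d_1}\}\times\R^{d_2}$ to see that the term $(S\oplus 0_{d_2,d_2})\rot(g)\trans(t)$ vanishes, giving $u(gt)=u(g)$. The only cosmetic difference is that you isolate $\rot(t)=I_d$ as a preliminary observation rather than folding it into the chain of equalities.
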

\begin{proof}
Suppose that $\rot(\G)$ is finite.
Let $n=\abs{\rot(\G)}$.
For all $g\in\G$ we have
\begin{equation}\label{eq:helppppp}
\rot(g)^n=I_d.
\end{equation}
Choose $N=m_0 n$.
Let $u\in\pi(\UIso)$.
By definition there exist some $a\in\R^d$ and $S\in\Skew(d_1)$ such that
\[\rot(g)u(g)=a+(S\oplus0)(g\gdot x_0 - x_0)\text{ for all }g\in\G.\]
For all $g\in\G$ and $t\in\T$ we have
\begin{align*}
u(gt^N)&=\rot(gt^N)^{-1}\parens[\big]{a+(S\oplus0)((gt^N)\gdot x_0-x_0)}\\
&=\rot(t)^{-N}\rot(g)^{-1}\parens[\big]{a+(S\oplus0)(g\gdot (\rot(t)^Nx_0)-x_0)+(S\oplus0)\rot(g)\trans(t^N)}\\
&=\rot(g)^{-1}\parens[\big]{a+(S\oplus0)(g\gdot x_0-x_0)}\\
&=u(g),
\end{align*}
where we used \eqref{eq:helppppp}, that $\rot(\G)<\O(d_1)\oplus\O(d_2)$ and that $\trans(\G)\subset\{0_{d_1}\}\times\R^{d_2}$ in the second to last step.
Thus, $u$ is $\T^N$-periodic and we have $u\in\UPer$.
\end{proof}
The following theorem characterizes the kernel of the seminorm $\norm\fdot_\RR$.
\begin{Theorem}\label{Theorem:kernelseminorm}
Suppose that $\RR\subset\G$ is an admissible neighborhood range of $\id$.
Then we have
\[\ker(\norm\fdot_{\RR})=\UIso\cap\UPer.\]
\end{Theorem}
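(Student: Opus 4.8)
The plan is to prove the two inclusions separately. For the inclusion $\UIso \cap \UPer \subset \ker(\norm\fdot_\RR)$, I would take $u \in \UIso \cap \UPer$. By Definition~\ref{Definition:Unew} there are $a \in \R^d$ and $S \in \Skew(d_1)$ with $\rot(g) u(g) = a + (S \oplus 0_{d_2,d_2})(g \gdot x_0 - x_0)$ for all $g \in \G$. I need to check that for every $g \in \G$ the restriction $u(g\fdot)|_\RR$ lies in $\Uiso\RR = \Utrans\RR + \Urot\RR$; since $\rot(h) u(gh) = \rot(g)^T\bigl(a + (S\oplus 0)((gh)\gdot x_0 - x_0)\bigr)$ and $(gh)\gdot x_0 - x_0 = (g\gdot x_0 - x_0) + \rot(g)(h\gdot x_0 - x_0)$, one gets $\rot(h)u(gh) = b(g) + S'(g)(h\gdot x_0 - x_0)$ with $b(g) = \rot(g)^T(a + (S\oplus0)(g\gdot x_0 - x_0)) \in \R^d$ and $S'(g) = \rot(g)^T(S\oplus0)\rot(g) \in \Skew(d)$. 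Hence $u(g\fdot)|_\RR \in \Uiso\RR$, so $\pi_{\Uiso\RR}(u(g\fdot)|_\RR) = 0$ for all $g$, and therefore $\norm u_\RR = 0$.

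For the reverse inclusion $\ker(\norm\fdot_\RR) \subset \UIso \cap \UPer$, let $u \in \UPer$ with $\norm u_\RR = 0$. Then $\pi_{\Uiso\RR}(u(g\fdot)|_\RR) = 0$, i.e.\ $u(g\fdot)|_\RR \in \Uiso\RR$, for every $g$ in a representation set $\CC_N$ (where $u$ is $\T^N$-periodic), and by periodicity actually for every $g \in \G$. The key observation is that, since $\RR$ has Property~$\proptwo$, the computation in the proof of Lemma~\ref{Lemma:hRSIequivalence} applies verbatim: writing $\rot(h) u(gh) = a(g) + S(g)(h \gdot x_0 - x_0)$ for all $h \in \RR$, $g \in \G$, the inductive argument along words in the generating set $\RR_2'$ yields $\rot(g) a(g) = a(\id) + S(\id)(g\gdot x_0 - x_0)$ and $S(g) = \rot(g)^T S(\id) \rot(g)$ for all $g \in \G$ (not just $g \in \RR_3$; the induction there only used that products of generators stay in the relevant set, and here the hypothesis holds on all of $\G$). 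Setting $h = \id$ gives $\rot(g) u(g) = \rot(g) a(g) = a(\id) + S(\id)(g\gdot x_0 - x_0)$ for all $g \in \G$, so $u$ is a displacement of the form in $\Uiso\G$ for $a := a(\id) \in \R^d$ and $S := S(\id) \in \Skew(d)$. It remains to see that $S$ must lie in $\Skew(d_1)$, i.e.\ $S = S_1 \oplus 0_{d_2,d_2}$ with $S_1 \in \Skew(d_1)$.

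To pin down the block structure of $S$, I would use periodicity of $u$ together with the relation $S(g) = \rot(g)^T S \rot(g)$. Pick $m = m_0$ and generators $t_1, \dots, t_{d_2}$ of $\T^m \cong \Z^{d_2}$ as in the proof of Theorem~\ref{Theorem:Korn}. Since $u$ is $\T^N$-periodic and $m \mid N$, evaluating $\rot(g) u(g) = a + S(g\gdot x_0 - x_0)$ at $g$ and at $g t_i^N$ and subtracting (using $\rot(t_i)^N = I_d$ and $\trans(t_i^N) \in \{0_{d_1}\}\times\R^{d_2}$, exactly as in the proof of Lemma~\ref{Lemma:finiterotG}) forces $S \trans(t_i^N) = N S \trans(t_i^N)$... more directly: $u(g) = u(gt_i^N)$ together with the explicit formula gives $S\bigl((gt_i^N)\gdot x_0 - g\gdot x_0\bigr) = 0$, and since $(gt_i^N)\gdot x_0 - g\gdot x_0 = \rot(g)\rot(t_i)^N \trans(t_i^{N}\text{-part})$... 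I would instead argue: periodicity of $u$ and of the $\Urot$-part forces the rotation block to act trivially on the translational directions $\{0_{d_1}\}\times\R^{d_2}$ spanned by $\trans(\T^m)$, and since $\G \gdot x_0$ has affine hull $\{0_{d-\daff}\}\times\R^\daff$ with $\rot(\G)$ preserving the splitting $\R^{d_1}\oplus\R^{d_2}$, one concludes $S$ annihilates $\R^{d_2}$, hence $S \in \Skew_{0,d_2}(d)$ with the $S_2$-block also forced to vanish by periodicity along $\T^m$, giving $S = S_1 \oplus 0$. This last block-structure argument, reconciling the algebraic identity $S(g) = \rot(g)^T S \rot(g)$ with $\T^N$-periodicity to eliminate the $S_2$ and $S_3$ blocks, is the main obstacle; everything else is a direct application of Lemma~\ref{Lemma:hRSIequivalence} and Proposition~\ref{Proposition:UtransUrotbig}.
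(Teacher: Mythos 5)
Your first inclusion and the reduction of the second inclusion to a single global identity $\rot(g)u(g)=a+S(g\gdot x_0-x_0)$ for all $g\in\G$, with fixed $a\in\R^d$ and $S\in\Skew(d)$, are correct. (The paper gets the same identity slightly differently, by applying Theorem~\ref{Theorem:equivalence} to $\RR\cup\{g\}$ and then using the rank argument of Lemma~\ref{Lemma:MatrixRank} to see that $a=u(\id)$ and $S$ do not depend on $g$; your route through the induction of Lemma~\ref{Lemma:hRSIequivalence} reaches the same point.)

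The genuine gap is the step you yourself flag as ``the main obstacle'': showing that $S$ annihilates $\{0_{d_1}\}\times\R^{d_2}$, so that $S=S_1\oplus 0_{d_2,d_2}$. Your sketch for this step does not work as written. You propose to compare $u(g)$ with $u(gt_i^{N})$ ``using $\rot(t_i)^N=I_d$ \dots exactly as in the proof of Lemma~\ref{Lemma:finiterotG}'', but that lemma assumes $\rot(\G)$ is finite, which is not a hypothesis here: for $t\in\T$ one only knows $\rot(t)=A\oplus I_{d_2}$ with $A\in\O(d_1)$ of possibly infinite order (e.g.\ a helical group with irrational angle), so $\rot(gt^N)\neq\rot(g)$ in general and periodicity of $u$ does \emph{not} yield $S\bigl((gt^N)\gdot x_0-g\gdot x_0\bigr)=0$. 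The fallback sentence (``periodicity \dots forces the rotation block to act trivially on the translational directions'') is the right intuition but is not an argument. The missing idea is a growth-versus-boundedness estimate: since $u$ is periodic and $\T^N$ has finite index in $\G$, $C:=\sup_{g\in\G}\norm{u(g)}<\infty$; since $\trans(t)\in\{0_{d_1}\}\times\R^{d_2}$ and $\rot(t)=A\oplus I_{d_2}$, one has $\trans(t^n)=n\,\trans(t)$, and the identity $\rot(t^n)u(t^n)=a+S(t^n\gdot x_0-x_0)$ gives
\begin{equation*}
n\norm{S\trans(t)}=\norm{S\trans(t^n)}=\norm[\big]{\rot(t^n)u(t^n)-a-S\rot(t^n)x_0+Sx_0}\le 2C+2\norm S\,\norm{x_0}
\end{equation*}
for all $n\in\N$, whence $S\trans(t)=0$ for every $t\in\T$. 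Since $\spano\set{\trans(t)}{t\in\T}=\{0_{d_1}\}\times\R^{d_2}$ and $S$ is skew, this kills both the $S_2$ and $S_3$ blocks at once and yields $u\in\UIso$. Without some version of this linear-growth argument your proof is incomplete.
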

\begin{proof}
First we show that $\UIso\cap\UPer\subset\ker(\norm\fdot_{\RR})$:\\
Let $u\in\UIso\cap\UPer$.
There exist some $a\in\R^d$ and $S\in\Skew(d)$ such that
\[\rot(g)\proj{u}(g)=a+S(g\cdot x_0-x_0)\qquad\text{for all }g\in\G.\]

Let $g\in\G$.
For all $h\in\RR$ it holds
\begin{align*}
\rot(h)\proj{u}(gh)&=\rot(g)^Ta+\rot(g)^TS((gh)\cdot x_0-x_0)\\
&=\rot(g)^Ta+\rot(g)^TS(g\cdot x_0-x_0)+\rot(g)^TS\rot(g)(h\cdot x_0-x_0).
\end{align*}
Since $\rot(g)^TS\rot(g)\in\Skew(d)$, we have $u(g\fdot)|_\RR\in\Uiso\RR$.

Let $N\in \MM$ be such that $u$ is $\T^N$-periodic.
Since $g\in\G$ was arbitrary, we have
\[\norm u_\RR^2=\frac1{\abs{\CC_N}}\sum_{g\in\CC_N}\norm{\pi_{\Uiso\RR}(u(g\fdot)|_\RR)}^2=0.\]
Thus, we have $u\in\ker(\norm\fdot_\RR)$.

Now we show that $\ker(\norm\fdot_{\RR})\subset\UIso\cap\UPer$:\\
Let $u\in\ker(\norm\fdot_{\RR})$.
By definition of $\norm\fdot_{\RR}$ we have $u\in\UPer$.
Let $g\in\G$.
By Theorem~\ref{Theorem:equivalence} we have $u\in\ker(\norm\fdot_{\RR\cup g\G_{x_0}})$ and thus $u|_{\RR\cup g\G_{x_0}}\in\Uiso{\RR\cup g\G_{x_0}}$.
There exist some $a\in\R^d$ and $S\in\Skew(d)$ such that
\begin{equation}\label{eq:rtgv}
\rot(h)\proj{u}(h)=a+S(h\gdot x_0-x_0)\qquad\text{for all }h\in\RR\cup g\G_{x_0}.
\end{equation}
Since $\RR$ is admissible, it holds $\G_{x_0}\subset\RR$ and thus, $a=\proj{u}(\id)$.
In particular, the vector $a$ is independent of $g$.

By Lemma~\ref{Lemma:MatrixRank} there exists some $A\in\R^{\daff\times\abs\RR}$ of rank $\daff$ such that
\[(h\cdot x_0-x_0)_{h\in\RR/\G_{x_0}}=\parens[\bigg]{\begin{matrix}0_{d-\daff,\abs\RR}\\ A\end{matrix}}.\]
Since $\G\cdot x_0\subset\{0_{d-\daff}\}\times\R^\daff$, without loss of generality we may assume that
\[S=\parens[\bigg]{\begin{matrix}0&S_1\\-S_1^T&S_2\end{matrix}}\]
for some $S_1\in\R^{(d-\daff)\times \daff}$ and $S_2\in\Skew(\daff)$.
By equation~\eqref{eq:rtgv} we have
\begin{equation}\label{eq:afd}
\parens[\bigg]{\rot(h)\proj{u}(h)-a}_{h\in\RR/\G_{x_0}}=\parens[\bigg]{\begin{matrix}0&S_1\\-S_1^T&S_2\end{matrix}}\parens[\bigg]{\begin{matrix}0\\ A\end{matrix}}=\parens[\bigg]{\begin{matrix}S_1A\\S_2A\end{matrix}}.
\end{equation}
Since the rank of $A$ is equal to the number of its rows, by \eqref{eq:afd} the matrix $S$ is independent of $g$.

Since $g\in\G$ was arbitrary, we have
\begin{equation}\label{eq:erv}
\rot(g)\proj{u}(g)=a+S(g\cdot x_0-x_0)\qquad\text{for all }g\in\G.
\end{equation}
Let $C=\sup\set{\norm{u(g)}}{g\in\G}$.
Since $u$ is periodic, we have $C<\infty$.
Let $t\in\T$.
By \eqref{eq:erv} for all $n\in\N$ we have
\[n\norm{S\trans(t)}=\norm{S\trans(t^n)}=\norm[\bigg]{\rot(t^n)\proj{u}(t^n)-a-S\rot(t^n)x_0+Sx_0}\le2C+2\norm S\norm{x_0}\]
and thus, $S\trans(t)=0$.
Since $t\in\T$ was arbitrary, we have
\[Sx=0\quad\text{for all }x\in\spano\parens{\set{\trans(t)}{t\in\T}}=\{0_{d_1}\}\times\R^{d_2},\]
and thus, $S\in\Skew(d_1)\oplus\{0_{d_2,d_2}\}$.
By \eqref{eq:erv} we have $u\in\UIso$.
\end{proof}
\begin{Corollary}\label{Corollary:kernelseminorm}
Suppose that $\rot(\G)$ is finite, $\G_{x_0}$ is trivial, and $\RR\subset\G$ is an admissible neighborhood range of $\id$.
Then we have
\[\ker(\norm\fdot_{\RR})=\UIso.\]
Moreover, the map
\begin{align*}
&\R^d\times\R^{d_3\times d_4}\times\Skew(d_4)\to\ker(\norm{\fdot}_\RR)\\
&(a,A_1,A_2)\mapsto \parens[\bigg]{\G\to\R^d,g\mapsto \rot(g)^T\parens[\Big]{a+\parens[\Big]{\parens[\Big]{\begin{smallmatrix}0&A_1\\-A_1^T&A_2\end{smallmatrix}}\oplus0_{d_2,d_2}}(g\gdot x_0-x_0)}}
\end{align*}
is an isomorphism and in particular we have
\[\dim(\ker(\norm\fdot_\RR))=d+d_4(d_3+d_1-1)/2,\]
where $d_3=d-\daff$ and $d_4=\daff-d_2$.
\end{Corollary}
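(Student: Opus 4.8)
The plan is to deduce the statement directly from Theorem~\ref{Theorem:kernelseminorm}, Lemma~\ref{Lemma:finiterotG} and Proposition~\ref{Proposition:UtransUrotbig}; no genuinely new argument is required. First, since $\RR$ has Property~$\proptwo$, Theorem~\ref{Theorem:kernelseminorm} gives $\ker(\norm\fdot_{\RR})=\UIso\cap\UPer$. Because $\rot(\G)$ is finite, Lemma~\ref{Lemma:finiterotG} yields $\UIso\subset\UPer$, so the intersection collapses to $\UIso$ and the first assertion $\ker(\norm\fdot_{\RR})=\UIso$ follows.

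For the explicit parametrization I would apply Proposition~\ref{Proposition:UtransUrotbig} with $\RR=\G$. Its hypotheses, namely $\id\in\G$ and $\aff(\G\cdot x_0)=\aff(\G\cdot x_0)$, hold trivially, and neither the statement nor its proof uses finiteness of $\RR$ (the proof only invokes a subset of $\RR$ with Property~$\propone$ together with pointwise identities and a rank argument). The proposition then gives $\UIso=\Unewiso\G=\UTrans\oplus\URot$ and shows that, in the notation there with $\RR=\G$, the maps $\phi_1\colon\R^d\to\UTrans$ and $\phi_4\colon\R^{d_3\times d_4}\times\Skew(d_4)\to\URot$ are isomorphisms. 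The map in the statement is exactly $(a,A_1,A_2)\mapsto\phi_1(a)+\phi_4(A_1,A_2)$; since the sum $\UTrans+\URot$ is direct and $\phi_1,\phi_4$ are isomorphisms onto the respective summands, this combined map is a linear isomorphism onto $\UTrans\oplus\URot=\UIso=\ker(\norm\fdot_{\RR})$.

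The dimension count is then immediate: $\dim\ker(\norm\fdot_{\RR})=\dim\UTrans+\dim\URot=d+\dim\Unewrot\G$, and Proposition~\ref{Proposition:UtransUrotbig} records $\dim\Unewrot\G=d_4(d_3+d_1-1)/2$, whence $\dim\ker(\norm\fdot_{\RR})=d+d_4(d_3+d_1-1)/2$ with $d_3=d-\daff$ and $d_4=\daff-d_2$. The only point requiring a moment's care — and the closest thing to an obstacle — is the legitimacy of invoking Proposition~\ref{Proposition:UtransUrotbig} for the possibly infinite index set $\RR=\G$; this is harmless, since $\UTrans$, $\URot$, $\UIso$ and the maps $\phi_i$ are defined for arbitrary $\RR\subset\G$ and the proof of the proposition nowhere uses that $\RR$ is finite.
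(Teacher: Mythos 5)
Your proposal is correct and follows exactly the paper's route: the paper's proof is a one-line citation of Theorem~\ref{Theorem:kernelseminorm}, Lemma~\ref{Lemma:finiterotG} and Proposition~\ref{Proposition:UtransUrotbig}, which is precisely the chain you spell out. Your worry about applying Proposition~\ref{Proposition:UtransUrotbig} with $\RR=\G$ is unfounded in any case, since that proposition only assumes $\id\in\RR$ and $\aff(\RR\cdot x_0)=\aff(\G\cdot x_0)$ (not finiteness), and the paper explicitly notes before its statement that it applies to $\G$ itself.
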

\begin{proof}
The assertion is clear by Theorem~\ref{Theorem:kernelseminorm}, Lemma~\ref{Lemma:finiterotG} and Proposition~\ref{Proposition:UtransUrotbig}.
\end{proof}
\begin{Corollary}
Suppose that $\G$ is a space group, $\G_{x_0}$ is trivial, and $\RR\subset\G$ is an admissible neighborhood range of $\id$.
Then we have
\[\ker(\norm\fdot_{\RR})=\UTrans.\]
\end{Corollary}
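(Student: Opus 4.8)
The plan is to derive the statement from Theorem~\ref{Theorem:kernelseminorm}, after recording that a space group has trivial codimension in the normalized coordinates fixed above. First I would carry out the dimensional bookkeeping. If $\G<\E(d)$ is a space group it contains $d$ translations with linearly independent translation components; since every translation in $\G<\gplus{\O(d_1)}{\SG}$ has translation component in $\{0_{d_1}\}\times\R^{d_2}$, this forces $d_2=d$, hence $d_1=0$, so $\SG=\G$ and the section $\T\subset\G$ is the full translation subgroup, in particular $\rot(t)=I_d$ for every $t\in\T$. Moreover $\aff(\G\cdot x_0)=\R^d$ (it contains a coset of a rank-$d$ lattice), so $\daff=d$, and therefore $d_3:=d-\daff=0$ and $d_4:=\daff-d_2=0$.

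Because $d_1=0$ we have $\Skew(d_1)=\{0\}$, so $\URot=\{0\}$ — this is the space-group case of Example~\ref{Example:SpaceGroupURot} — and hence $\UIso=\UTrans\oplus\URot=\UTrans$ by Proposition~\ref{Proposition:UtransUrotbig}. Since $\RR$ has Property~$\proptwo$, Theorem~\ref{Theorem:kernelseminorm} gives $\ker(\norm\fdot_{\RR})=\UIso\cap\UPer=\UTrans\cap\UPer$, so it only remains to prove the inclusion $\UTrans\subset\UPer$.

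This last inclusion is the one point that genuinely uses the space-group hypothesis a second time (recall that $\UTrans\not\subset\UPer$ for general discrete $\G$, as remarked after Definition~\ref{Definition:Unew}), and I expect it to be the only place needing care. Let $u\in\UTrans$, say $\rot(g)u(g)=a$ for all $g\in\G$ with $a\in\R^d$ fixed. Using $\rot(t)=I_d$ for $t\in\T$, for every $g\in\G$, $t\in\T$ and $N\in\MM$ we get $u(gt^N)=\rot(gt^N)^{-1}a=\rot(t)^{-N}\rot(g)^{-1}a=\rot(g)^{-1}a=u(g)$; thus $u$ is $\T^N$-periodic and $u\in\UPer$. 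Combining the above, $\ker(\norm\fdot_{\RR})=\UTrans$.

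One could also argue directly from Corollary~\ref{Corollary:kernelseminorm}: the point group $\rot(\G)$ of a space group is finite, so $\ker(\norm\fdot_{\RR})=\UIso$, and in the isomorphism displayed there the factor $\R^{d_3\times d_4}\times\Skew(d_4)$ collapses since $d_3=d_4=0$, leaving an isomorphism $\R^d\to\UTrans$. In either route, everything beyond the bookkeeping $d_1=d_3=d_4=0$, $d_2=\daff=d$ is immediate.
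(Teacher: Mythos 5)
Your proof is correct, and your ``alternative'' route at the end --- Corollary~\ref{Corollary:kernelseminorm} combined with the vanishing of $\URot$ (Example~\ref{Example:SpaceGroupURot}) --- is exactly the paper's proof. Your primary route merely unpacks this: instead of citing Corollary~\ref{Corollary:kernelseminorm} (which rests on Lemma~\ref{Lemma:finiterotG} and finiteness of the point group $\rot(\G)$), you apply Theorem~\ref{Theorem:kernelseminorm} directly and verify $\UTrans\subset\UPer$ by hand using $\rot(t)=I_d$ for $t\in\T$, which is a correct and slightly more self-contained specialization of the same argument.
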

\begin{proof}
This is clear by Corollary~\ref{Corollary:kernelseminorm} and Example~\ref{Example:SpaceGroupURot}.
\end{proof}
\begin{Example}\label{Example:propertystarstar}
We present an example which shows that in Theorem~\ref{Theorem:equivalence} the premise that $\RR_1$ and $\RR_2$ are admissible neighborhood ranges of $\id$ cannot be weakened to the premise that for both $i\in\{1,2\}$ one has $\RR_i\G_{x_0}=\RR_i$, $\aff(\RR_i\cdot x_0)=\aff(\G\cdot x_0)$ and $\RR_i$ is a generating set of $\G$.

We consider the simple atomic chain from Example~\ref{ex:elementary-chains-i}\ref{ex:item-straight} with $d=2$, $d_1=1$, $d_2=1$, $t=\iso{I_2}{e_2}$, $\G=\angles t$ and $x_0=0$. 
The set $\RR_1=\{\id,t\}$ generates $\G$ and satisfies $\aff(\RR_1\cdot x_0)=\aff(\G\cdot x_0)$ but is not an admissible neighborhood range of $\id$.
The set $\RR_2=\{\id,t,t^2\}$ is admissible.
Using that the seminorms $\norm\fdot_\RR$ and $\nablanorm\fdot{\RR\setminus\{\id\}}$ are equivalent by Proposition~\ref{Proposition:nablanormequivalent-ohne-0}, it follows
\[\ker(\norm\fdot_{\RR_1})=\set{u\in\UPer}{\exists\,a\in\R\,\forall\,g\in\G : u_2(g)=a}.\]
By Corollary~\ref{Corollary:kernelseminorm} and Example~\ref{Example:SpaceGroupURot} we have
\[\ker(\norm\fdot_{\RR_2})=\UIso=\UTrans.\]
Since the kernels of $\norm\fdot_{\RR_1}$ and $\norm\fdot_{\RR_2}$ are not equal, the seminorms $\norm\fdot_{\RR_1}$ and $\norm\fdot_{\RR_2}$ are not equivalent.
\end{Example}
The following theorem summarizes the main results of this section.
\begin{Theorem}\label{Theorem:EquivalenceAll}
Suppose that $\RR_1,\RR_2\subset\G$ are admissible neighborhood ranges of $\id$.
Then the seminorms $\norm\fdot_{\RR_1}$, $\norm\fdot_{\RR_2}$, $\zeronorm\fdot{\RR_1}$, $\zeronorm\fdot{\RR_2}$, $\nablanorm\fdot{\RR_1}$, $\nablanorm\fdot{\RR_2}$, $\nablazeronorm\fdot{\RR_1}$ and $\nablazeronorm\fdot{\RR_2}$ are equivalent and their kernel is $\UIso\cap\UPer$.
\end{Theorem}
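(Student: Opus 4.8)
The plan is to obtain Theorem~\ref{Theorem:EquivalenceAll} by assembling the equivalences already established and invoking transitivity of seminorm equivalence. First I would record that Property~$\proptwo$ implies Property~$\propone$, so in particular $\id\in\RR_1$ and $\id\in\RR_2$; this is precisely the hypothesis needed to apply Propositions~\ref{Proposition:nablanormequivalent-ohne-0} and~\ref{Proposition:nablanormequivalent-0}, which yield $\norm\fdot_{\RR_i}\sim\nablanorm\fdot{\RR_i}$ and $\zeronorm\fdot{\RR_i}\sim\nablazeronorm\fdot{\RR_i}$ for $i=1,2$.

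Next I would invoke Theorem~\ref{Theorem:equivalence}, applicable since both $\RR_1$ and $\RR_2$ have Property~$\proptwo$, to get $\norm\fdot_{\RR_1}\sim\norm\fdot_{\RR_2}$, and then Theorem~\ref{Theorem:Korn}, applied separately to $\RR_1$ and to $\RR_2$ (each has Property~$\proptwo$), to get $\norm\fdot_{\RR_i}\sim\zeronorm\fdot{\RR_i}$. Chaining these relations — $\nablanorm\fdot{\RR_1}\sim\norm\fdot_{\RR_1}\sim\norm\fdot_{\RR_2}\sim\nablanorm\fdot{\RR_2}$ on the one hand, and $\norm\fdot_{\RR_i}\sim\zeronorm\fdot{\RR_i}\sim\nablazeronorm\fdot{\RR_i}$ on the other — and using that equivalence of seminorms on a fixed vector space is an equivalence relation, all eight seminorms become pairwise equivalent.

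For the kernel, I would use the elementary fact that equivalent seminorms have identical kernels: if $p\le Cq$ and $q\le Cp$, then $p(u)=0\iff q(u)=0$. Hence the common kernel equals $\ker(\norm\fdot_{\RR_1})$, which by Theorem~\ref{Theorem:kernelseminorm} (applicable since $\RR_1$ has Property~$\proptwo$) is $\UIso\cap\UPer$.

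I do not expect a genuine obstacle here, since the statement is essentially a repackaging of the section's main results; the only points requiring care are verifying that the hypotheses of each cited result are met — in every instance Property~$\proptwo$ of $\RR_1,\RR_2$ is exactly what is assumed, and $\id\in\RR_i$ follows from Property~$\propone$ — together with the routine remark that seminorm equivalence is transitive and kernel-preserving.
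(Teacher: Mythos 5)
Your proposal is correct and coincides with the paper's own proof, which likewise derives the statement by combining Theorem~\ref{Theorem:equivalence}, Propositions~\ref{Proposition:nablanormequivalent-ohne-0} and~\ref{Proposition:nablanormequivalent-0}, Theorem~\ref{Theorem:Korn} and Theorem~\ref{Theorem:kernelseminorm} via transitivity of seminorm equivalence. Your additional remarks on verifying hypotheses (Property~$\proptwo$ implying Property~$\propone$, hence $\id\in\RR_i$) and on equivalent seminorms having equal kernels are exactly the routine checks the paper leaves implicit.
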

\begin{proof}
This is clear by Theorem~\ref{Theorem:equivalence}, Proposition~\ref{Proposition:nablanormequivalent-ohne-0}, Proposition~\ref{Proposition:nablanormequivalent-0}, Theorem~\ref{Theorem:Korn} and Theorem~\ref{Theorem:kernelseminorm}.
\end{proof}

\section{Stronger seminorms}
\label{section:zero}
%
In this section we introduce two stronger seminorms. First we consider $\newnorm\fdot\RR$ and its variant $\newnablanorm\fdot\RR$ that are defined as the averaged local distance to the spaces $\UIso$, respectively, $\URot$, introduced in Definition~\ref{Definition:Unew}. They thus measure rigidity up to local rotations about $\{0_{d_1}\}\times\R^{d_2}$. Then we define the even stronger seminorm $\norm{\nabla_\RR\fdot}_2$ as a discrete $H^1$ norm. Again we show that these seminorms are essentially independent of $\RR$ if $\RR$ is rich enough. In Corollary~\ref{Corollary:seminormequivalence} we observe that for bulk structures all seminorms (and in particular the weakest $\norm\fdot_\RR$ and strongest $\norm{\nabla_\RR\fdot}_2$) are equivalent.
\begin{Definition}
For all finite sets $\RR\subset\G$ such that $\RR\G_{x_0}=\RR$ we define the seminorms
\begin{align*}
&\begin{aligned}\newnorm{\fdot}{\RR}\colon&\UPer\to[0,\infty)\\
&u\mapsto\parens[\Big]{\frac1{\abs{\CC_N}}\sum_{g\in\CC_N}\norm{\pi_{\Unewiso\RR}(u(g\fdot)|_\RR)}^2}^{\frac12}\quad\text{if $u$ is $\T^N$-periodic,}\end{aligned}\\
\shortintertext{and}
&\begin{aligned}\newnablanorm\fdot\RR\colon&\UPer\to[0,\infty)\\
&u\mapsto\parens[\Big]{\frac1{\abs{\CC_N}}\sum_{g\in\CC_N}\norm{\pi_{\Unewrot\RR}(\nabla_\RR u(g))}^2}^{\frac12}\quad\text{if $u$ is $\T^N$-periodic,}\end{aligned}
\end{align*}
where $\pi_{\Unewiso\RR}$ and $\pi_{\Unewrot\RR}$ are the orthogonal projections on $\{u\colon\RR\to\R^d\}$ with respect to the norm $\norm\fdot$ with kernels $\Unewiso\RR$ and $\Unewrot\RR$, respectively.
\end{Definition}
\begin{Remark}\label{Remark:inequalityseminorm}
For all finite sets $\RR\subset\G$ such that $\RR\G_{x_0}=\RR$ we have $\norm\fdot_\RR\le\newnorm\fdot\RR$, but the seminorms $\norm\fdot_\RR$ and $\newnorm\fdot\RR$ need not be equivalent, see Proposition~\ref{Proposition:ExOne}.
\end{Remark}
\begin{Theorem}\label{Theorem:NewEquivalenceAll}
Suppose that $\RR_1,\RR_2\subset\G$ are admissible neighborhood ranges of $\id$.
Then the seminorms $\newnorm\fdot{\RR_1}$, $\newnorm\fdot{\RR_2}$, $\newnablanorm\fdot{\RR_1}$, and $\newnablanorm\fdot{\RR_2}$ are equivalent and their kernel is $\UIso\cap\UPer$.
\end{Theorem}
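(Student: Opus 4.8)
The plan is to mimic the architecture already established for the family $\norm\fdot_\RR$, $\zeronorm\fdot\RR$ and their $\nabla$-variants, replacing $\Uiso\RR$ by $\Unewiso\RR$ and $\Urot\RR$ by $\Unewrot\RR$ throughout. Concretely, the proof breaks into three blocks: (i) equivalence of $\newnorm\fdot\RR$ and $\newnablanorm\fdot\RR$ for a fixed $\RR$ with $\id\in\RR$; (ii) independence of $\newnorm\fdot\RR$ (up to equivalence) on the particular $\RR$ with Property~$\proptwo$; and (iii) identification of the common kernel as $\UIso\cap\UPer$. For (i) one repeats verbatim the argument of Proposition~\ref{Proposition:nablanormequivalent-ohne-0}: the same chain of infima over $b\in\R^d$ and over $S$ in the relevant skew subspace goes through because $\Unewiso\RR=\Utrans\RR\oplus\Unewrot\RR$ (Proposition~\ref{Proposition:UtransUrotbig}), and the elementary inequality $\norm{v+w}^2\ge\norm v^2/2-\norm w^2$ is used in exactly the same place. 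So after (i) it suffices to work with either the $\nabla$-seminorms or the non-$\nabla$ ones, whichever is more convenient.

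For (ii) the strategy is to transplant Lemma~\ref{Lemma:hRSIequivalence} and Theorem~\ref{Theorem:equivalence}. Define, in analogy with $\semip{\RR}$ and $\semiq{\RR_1}{\RR_2}$, the seminorms $\newsemip{\RR}(u)=\norm{\pi_{\Unewiso\RR}(u)}$ and $\newsemiq{\RR_1}{\RR_2}(u)=\bigl(\sum_{g\in\RR_1}\newsemip[2]{\RR_2}(u(g\fdot)|_{\RR_2})\bigr)^{1/2}$. The inclusion $\Unewiso{\RR_3\RR_2}\subset\ker(\newsemiq{\RR_3}{\RR_2})$ is immediate because if $\rot(g)u(g)=a+(S\oplus0_{d_2,d_2})(g\gdot x_0-x_0)$ globally with $S\in\Skew(d_1)$, then the conjugation computation already in the proof of Lemma~\ref{Lemma:hRSIequivalence} shows $u(g\fdot)|_{\RR_2}\in\Uiso{\RR_2}$ — but one must check it actually lands in $\Unewiso{\RR_2}$. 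Here the key point is that $\rot(g)=B_g\oplus C_g$ with $B_g\in\O(d_1)$, $C_g\in\O(\daff)$ (Lemma~\ref{Lemma:Shelp2}, together with $\G$ acting trivially on $\R^{d-\daff}\times\{0_\daff\}$), so $\rot(g)^T(S\oplus0_{d_2,d_2})\rot(g)=(B_g^TSB_g\oplus0_{d_2,d_2})$ with $B_g^TSB_g\in\Skew(d_1)$; thus the conjugated matrix stays of the required block form. For the reverse inclusion $\ker(\newsemiq{\RR_3}{\RR_2})\subset\Unewiso{\RR_3\RR_2}$, run the same inductive propagation of the local data $(a(g),S(g))$ along words in $\RR_2'\cup(\RR_2')^{-1}$ exactly as in the two cases of Lemma~\ref{Lemma:hRSIequivalence}; since each $S(g)$ now additionally satisfies the block constraint $S(g)\in\Skew(d_1)\oplus0_{d_2,d_2}$ at $g=\id$ and this constraint is preserved under conjugation by $\rot(gr)=B_{gr}\oplus C_{gr}$, the propagated $S(g)$ stay in that subspace, and one concludes $u\in\Unewiso{\RR_3\RR_2}$. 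Then the summation-over-cosets computation of Theorem~\ref{Theorem:equivalence}, with $\semip{}$ replaced by $\newsemip{}$, yields $\newnorm\fdot{\RR_1}\le C\newnorm\fdot{\RR_2}$, and by symmetry the equivalence.

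For (iii) one shows $\ker(\newnorm\fdot\RR)=\UIso\cap\UPer$. The inclusion $\supseteq$ is the computation in the first half of the proof of Theorem~\ref{Theorem:kernelseminorm}, observing additionally that for $u\in\UIso\cap\UPer$ the global matrix $S$ already lies in $\Skew(d_1)\oplus0_{d_2,d_2}$, so $\rot(g)^TS\rot(g)$ does too and hence $u(g\fdot)|_\RR\in\Unewiso\RR$. For $\subseteq$: from (ii) one gets $u\in\ker(\newnorm\fdot{\RR\cup\{g\}})$ for every $g$, so $u|_{\RR\cup\{g\}}\in\Unewiso{\RR\cup\{g\}}$; the rank-$\daff$ argument via Lemma~\ref{Lemma:MatrixRank} pins down a single global $a=u(\id)$ and a single global $S\in\Skew(d_1)\oplus0_{d_2,d_2}$ with $\rot(g)u(g)=a+S(g\gdot x_0-x_0)$, i.e.\ $u\in\UIso$; combined with $u\in\UPer$ (which is part of the definition of $\newnorm\fdot\RR$) this gives $u\in\UIso\cap\UPer$. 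Finally, since $\newnablanorm\fdot\RR$ and $\newnorm\fdot\RR$ are equivalent by (i), they share this kernel. The only genuine subtlety — and the step I expect to need the most care — is the verification in (ii) that the block structure $S\in\Skew(d_1)\oplus0_{d_2,d_2}$ is stable under the conjugations $S\mapsto\rot(gr)^TS\rot(gr)$ appearing in the induction; this is exactly where $\rot(\G)\subset\O(d_1)\oplus\O(d_2)$ (not merely $\O(d)$) is used, and it is what makes $\Unewiso\RR$, unlike a generic subspace of $\Uiso\RR$, behave well under the transplanted machinery. No minimax/Fourier input (Lemma~\ref{Lemma:helpTheorem}) is needed here, since $\Unewrot\RR\subset\zeroUrot\RR$ already, so the Korn-type hard direction is not invoked.
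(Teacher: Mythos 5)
Your proposal is correct and follows essentially the same route as the paper, which likewise transplants Lemma~\ref{Lemma:hRSIequivalence}, Theorem~\ref{Theorem:equivalence} and Proposition~\ref{Proposition:nablanormequivalent-ohne-0} to the seminorms built from $\Unewiso\RR$ and $\Unewrot\RR$; the block-stability of $\Skew(d_1)\oplus 0_{d_2,d_2}$ under conjugation by $\rot(g)$, which you rightly single out, is exactly the point that makes the transplantation work. The only (harmless) deviation is in the inclusion $\ker(\newnorm\fdot\RR)\subset\UIso\cap\UPer$: you rerun the rank argument of Theorem~\ref{Theorem:kernelseminorm}, whereas the paper simply uses $\norm\fdot_\RR\le\newnorm\fdot\RR$ together with Theorem~\ref{Theorem:kernelseminorm}.
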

\begin{proof}
The proof that the seminorms $\newnorm\fdot{\RR_1}$ and $\newnorm\fdot{\RR_2}$ are equivalent is analogous to the proof of Theorem~\ref{Theorem:equivalence}:
For all finite sets $\RR\subset\G$ such that $\RR\G_{x_0}=\RR$ we define the seminorm
\begin{align*}
\newsemip{\RR}\colon&\{u\colon\RR\to\R^d\}\to[0,\infty),\qquad
u\mapsto\norm{\pi_{\Unewiso\RR}(u)}
\end{align*}
on $(\R^d)^\RR$ whose kernel is $\Unewiso\RR$.
Moreover, for all finite sets $\RR_1,\RR_2\subset\G$ such that $\RR\G_{x_0}=\RR$ we define the seminorm
\begin{align*}
\newsemiq{\RR_1}{\RR_2}\colon\{u\colon\RR_1\RR_2\to\R^d\}\to[0,\infty)\qquad
u\mapsto \parens[\bigg]{\sum_{g\in\RR_1}\newsemip[2]{\RR_2}\parens[\big]{u(g\fdot)|_{\RR_2}}}^{\frac12}
\end{align*}
on $(\R^d)^{\RR_1\RR_2}$.
Analogously to Lemma~\ref{Lemma:hRSIequivalence} for all $\RR_1,\RR_2\subset\G$ such that $\RR_1$ is finite and $\RR_2$ is an admissible neighborhood range of $\id$ there exists a finite set $\RR_3\subset\G$ such that $\RR_1\subset \RR_3\RR_2$ and the seminorms $\newsemip{\RR_3\RR_2}$ and $\newsemiq{\RR_3}{\RR_2}$ are equivalent.
As in the proof of Theorem~\ref{Theorem:equivalence} this implies that the seminorms $\newnorm\fdot{\RR_1}$ and $\newnorm\fdot{\RR_2}$ are equivalent.

Analogously to the proof of Proposition~\ref{Proposition:nablanormequivalent-ohne-0}, the seminorms $\newnorm\fdot\RR$ and $\newnablanorm\fdot\RR$ are equivalent for all finite sets $\RR\subset\G$ such that $\RR\G_{x_0}=\RR$ and $\G_{x_0}\subset\RR$.
In particular, if $\RR\subset\G$ is admissible, then $\newnorm\fdot\RR$ and $\newnablanorm\fdot\RR$ are equivalent.

Finally, suppose that $\RR\subset\G$ is an admissible neighborhood range of $\id$.
Analogously to the proof of Theorem~\ref{Theorem:kernelseminorm}, we have $\UIso\cap\UPer\subset\ker(\newnorm\fdot\RR)$. Since $\norm\fdot_\RR\le\newnorm\fdot\RR$, by Theorem~\ref{Theorem:kernelseminorm} we have $\ker(\newnorm\fdot\RR)\subset\UIso\cap\UPer$.
\end{proof}
For the second seminorm to be discussed in this section we first slightly extend our notion of the $\ell^2$ norm $\norm\fdot_2$. 
\begin{Definition}
For all finite sets $\RR\subset\G$ we define the norm
\begin{align*}
\norm\fdot_2\colon&\set[\big]{u\colon\G\to\{v\colon\RR\to\R^d\}}{u\text{ is periodic}}\to[0,\infty)\\
&u\mapsto\parens[\Big]{\frac1{\abs{\CC_N}}\sum_{g\in\CC_N}\norm{u(g)}^2}^{\frac12}\quad\text{if $u$ is $\T^N$-periodic}.
\end{align*}
\end{Definition}
\begin{Theorem}\label{Theorem:nablaequivalent}
Let $\RR_1,\RR_2\subset\G$ be finite generating sets of $\G$ such that $\RR_1\G_{x_0}=\RR_1$ and $\RR_2\G_{x_0}=\RR_2$.
Then the seminorms $\norm{\nabla_{\RR_1}\fdot}_2$ and $\norm{\nabla_{\RR_2}\fdot}_2$ on $\UPer$ are equivalent and their kernel is $\UTrans\cap\UPer$.
\end{Theorem}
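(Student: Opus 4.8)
The plan is to follow the same two-part strategy used for the other equivalence results in the section: first establish that the kernel is independent of the generating set and equals $\UTrans\cap\UPer$, then use a finite-dimensional comparison argument to upgrade the equality of kernels to equivalence of seminorms. The crucial structural fact is that $\nabla_\RR u(g) = 0$ for all $g$ precisely when $u(gh) = \rot(h)^T u(g)$ for all $g\in\G$, $h\in\RR$; iterating along words in the generating set $\RR$ (using $\id\in\RR$ may be assumed since $\nablanorm\fdot\RR = \nablanorm\fdot{\RR\setminus\{\id\}}$ and one can add $\id$ freely) this propagates to $\rot(g)u(g) = u(\id)$ for \emph{all} $g\in\G$, i.e.\ $u\in\UTrans$; conversely any $u\in\UTrans$ satisfies $\rot(g)u(g) = a$ for a fixed $a$, hence $u(gh)=\rot(h)^T u(g)$ and $\nabla_\RR u \equiv 0$. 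Since $u\in\UPer$ by definition of the domain, this gives $\ker(\norm{\nabla_\RR\fdot}_2) = \UTrans\cap\UPer$ for every finite generating set $\RR$, in particular the same kernel for $\RR_1$ and $\RR_2$.

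Next I would reduce the equivalence to a statement about a single Fourier mode. Fix a generating set $\RR$ with $\id\in\RR$. For $u\in\UPer$ that is $\T^N$-periodic, decompose over $\CC_m$ and apply the Plancherel formula (Proposition~\ref{Proposition:TFplancherelmatrix}) and Lemma~\ref{Lemma:PeriodicTranslation} on the translation subgroup $\T^m$, exactly as in the display \eqref{eq:as} in the proof of Theorem~\ref{Theorem:Korn}. This expresses $\norm{\nabla_\RR u}_2^2$ as a weighted sum over $g\in\CC_m$ and $\chi\in\EE$ of $\norm{(\chi(h)^{-1}\fourier{u_g}(\chi) - \rot(h)^T\fourier{u_g}(\chi))_{h\in\RR}}^2$. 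One then has to compare, for each character $\chi\in\dual{\T^m}$ and each vector $v\in\C^d$, the two quantities
\[
\Phi_\RR(\chi,v) := \norm[\big]{\parens[\big]{\chi(h)^{-1}v - \rot(h)^Tv}_{h\in\RR}}^2,
\]
for $\RR = \RR_1$ and $\RR=\RR_2$. Both $\Phi_{\RR_1}$ and $\Phi_{\RR_2}$ vanish exactly on the same set: since $\RR_i$ generates $\G$, $\chi(h)^{-1}v = \rot(h)^Tv$ for all $h\in\RR_i$ forces $\chi$ to extend to a character on $\G$ acting as $\rot(g)^{-1}$ on the line $\C v$ for all $g$, which is a condition on the pair $(\chi,v)$ independent of $i$. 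However, $\chi$ ranges over an infinite set $\EE$, so a naive finite-dimensional compactness argument does not immediately apply.

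The main obstacle is precisely this uniformity over infinitely many characters. I would handle it as in the proof of Theorem~\ref{Theorem:Korn}: the relevant characters all factor through $\T^m/\T^{N}$ for the working period $N$, but more importantly only finitely many ``directions'' of $(\chi,v)$ matter because $\rot(\T^m)$ lies in a fixed compact group and $\chi\mapsto(\chi(t_1),\dots,\chi(t_{d_2}))$ parametrizes characters by a torus. Concretely, write $\rot(t_i)$ in the simultaneously block-diagonalized form from Theorem~\ref{Theorem:simultaneousdiagonalisation} (as in the proof of Theorem~\ref{Theorem:Korn}), so that $\Phi_\RR(\chi,v)$ becomes, after restricting to the powers $t_i^n$ contained in $\RR$ (possible since $\RR$ generates $\G$, so after enlarging by equivalence we may assume $\RR$ contains $\set{t_i^n}{1\le n\le N_0}$ for a suitable $N_0$ — here one invokes that $\norm{\nabla_{\RR}\fdot}_2 \le C\norm{\nabla_{\RR'}\fdot}_2$ whenever $\RR\subset\RR'$ and a telescoping/word-expansion bound for the reverse, mirroring Theorem~\ref{Theorem:equivalence}), a trigonometric expression in the angles $(\alpha_i)$ and $(\theta_{i,j})$ to which a Łojasiewicz-type or explicit minimax estimate applies uniformly. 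In fact, since the comparison is between two \emph{fixed} finite generating sets and both seminorms have been shown to have the same kernel, it is cleanest to argue: by the word-expansion bound (each $h\in\RR_2$ is a bounded-length word in $\RR_1^{\pm1}$, and $\nabla_\RR u(g)$ for a product $h = h'h''$ is controlled by $\nabla_{\{h',h''\}}$-type stencils via the cocycle identity $\nabla_\RR u(g)(h'h'') = \nabla_\RR u(g)(h') \cdot(\text{rot factor}) + \rot(h'')^T\nabla_\RR u(gh')(h'')$ rearranged) one gets $\norm{\nabla_{\RR_2}\fdot}_2 \le C\norm{\nabla_{\RR_1}\fdot}_2$ directly, and symmetrically the reverse; no Fourier analysis is then needed at all, and the kernel statement follows from the explicit propagation argument above. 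I would present this word-expansion estimate as the core computation, deferring the (routine) constants.
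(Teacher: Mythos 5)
Your final plan --- bounding $\norm{\nabla_{\RR_1}u}_2$ by $\norm{\nabla_{\RR_2}u}_2$ via expanding each $r\in\RR_1$ as a bounded-length word in $\RR_2\cup\RR_2^{-1}$ and telescoping with the cocycle identity, then identifying the kernel by propagating $u(gh)=\rot(h)^Tu(g)$ along words to get $\rot(g)u(g)=u(\id)$ --- is exactly the paper's proof, and the Fourier/Plancherel detour you sketch in the middle is indeed unnecessary, as you yourself conclude. The proposal is correct and takes essentially the same approach as the paper.
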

\begin{proof}
First we show that the seminorms $\norm{\nabla_{\RR_1}\fdot}_2$ and $\norm{\nabla_{\RR_2}\fdot}_2$ are equivalent.
It suffices to show that there exists a constant $C>0$ such that $\norm{\nabla_{\RR_1}\fdot}_2\le C\norm{\nabla_{\RR_2}\fdot}_2$.
Since $\RR_2$ generates $\G$, for every $r\in\RR_1$ there exist some $n_r\in\N$ and $s_{r,1},\dots,s_{r,n_r}\in\RR_2\cup\RR_2^{-1}$ such that $r=s_{r,1}\dots s_{r,n_r}$.
Let $u\in\UPer$.
Let $N\in \MM$ be such that $u$ is $\T^N$-periodic.
Then we have
\begin{align*}
&\norm{\nabla_{\RR_1}u}_2^2=\frac1{\abs{\CC_N}}\sum_{g\in\CC_N}\norm{\nabla_{\RR_1}u(g)}^2\\
&\quad=\frac1{\abs{\CC_N}}\sum_{g\in\CC_N}\sum_{r\in\RR_1}\norm{\rot(r)\proj{u}(gr)-\proj{u}(g)}^2\\
&\quad=\frac1{\abs{\CC_N}}\sum_{g\in\CC_N}\sum_{r\in\RR_1}\norm[\bigg]{\sum_{i=1}^{n_r}\rot(s_{r,1}\dots s_{r,{i-1}})\parens[\big]{\rot(s_{r,i})\proj{u}(gs_{r,1}\dots s_{r,i})-\proj{u}(gs_{r,1}\dots s_{r,i-1})}}^2\\
&\quad\le \frac1{\abs{\CC_N}}\sum_{g\in\CC_N}\sum_{r\in\RR_1}\parens[\bigg]{\sum_{i=1}^{n_r}\norm{\rot(s_{r,i})\proj{u}(gs_{r,1}\dots s_{r,i})-\proj{u}(gs_{r,1}\dots s_{r,i-1})}}^2\\
&\quad\le \frac1{\abs{\CC_N}}\sum_{g\in\CC_N}\sum_{r\in\RR_1}n_r\sum_{i=1}^{n_r}\norm{\rot(s_{r,i})\proj{u}(gs_{r,1}\dots s_{r,i})-\proj{u}(gs_{r,1}\dots s_{r,i-1})}^2\\
&\quad\le\frac C{\abs{\CC_N}}\sum_{\tilde g\in\CC_N}\sum_{s\in\RR_2}\norm{\rot(s)\proj{u}(\tilde gs)-\proj{u}(\tilde g)}^2\\
&\quad=C\norm{\nabla_{\RR_2}u}_2^2,
\end{align*}
where $C=\sum_{r\in\RR_1}n_r^2$.
In the fifth step we used that the arithmetic mean is lower or equal than the root mean square.
In the sixth step, if $s_{r,i}\in\RR_2$, we substituted $gs_{r,1}\dots s_{r,i-1}$ by $\tilde g$, and if $s_{r,i}\in\RR_2^{-1}$, we substituted $gs_{r,1}\dots s_{r,i}$ by $\tilde g$.

Let $\RR=\RR_1$.
Now we show that $\ker(\norm{\nabla_\RR\fdot}_2)=\UTrans\cap\UPer$.
It is clear that $\UTrans\cap\UPer\subset\ker(\norm{\nabla_\RR\fdot}_2)$.
If $u\in\ker(\norm{\nabla_\RR\fdot}_2)$, then for all $g\in\G$ we have
\begin{equation}\label{eq:Equality}
0=\norm{\nabla_{\RR\cup g\G_{x_0}}u}_2\ge\norm{\rot(g)\proj{u}(g)-\proj{u}(\id)},
\end{equation}
where we used that the seminorms $\norm{\nabla_\RR\fdot}_2$ and $\norm{\nabla_{\RR\cup g\G_{x_0}}\fdot}_2$ are equivalent.
By \eqref{eq:Equality} we have $\rot(g)\proj{u}(g)=\proj{u}(\id)$ for all $g\in\G$ and thus $u\in\UTrans$.
\end{proof}
\begin{Remark}
For all finite sets $\RR\subset\G$ such that $\RR\G_{x_0}=\RR$ we have $\newnorm\fdot\RR\le\norm{\nabla_\RR\fdot}_2$, but the seminorms $\newnorm\fdot\RR$ and $\norm{\nabla_\RR\fdot}$ need not be equivalent since their kernels are not equal, see Theorem~\ref{Theorem:NewEquivalenceAll} and Theorem~\ref{Theorem:nablaequivalent}.
\end{Remark}
Theorem~\ref{Theorem:Korn} yields the following corollary.
\begin{Corollary}(A discrete Korn inequality for space groups)\label{Corollary:seminormequivalence}
Suppose that $\G$ is a space group and $\RR\subset\G$ is an admissible neighborhood range of $\id$.
Then the seminorms $\norm\fdot_{\RR}$, $\newnorm\fdot\RR$ and $\norm{\nabla_\RR\fdot}_2$ are equivalent.
\end{Corollary}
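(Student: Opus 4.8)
The plan is to reduce the statement to Theorems~\ref{Theorem:Korn} and~\ref{Theorem:NewEquivalenceAll}, which already contain all the analytic work, by observing that for a space group the auxiliary rotation spaces entering the definitions of $\zeronorm\fdot\RR$ and $\newnorm\fdot\RR$ degenerate to $\{0\}$. (Note that the chain $\norm\fdot_\RR\le\newnorm\fdot\RR\le\norm{\nabla_\RR\fdot}_2$ is already available from Remark~\ref{Remark:inequalityseminorm} and the Remark preceding the Corollary, so only the reverse bounds are at issue; the argument below in fact yields the full equivalences.) First I would record that a space group $\G<\E(d)$ has $d_1=0$ and $d_2=d$, and that, since $\G$ contains $d$ linearly independent translations, every orbit $\G\cdot x_0$ is $d$-dimensional, so that $\daff=d$. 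Hence, in the notation of Proposition~\ref{Proposition:UtransUrotbig}, $d_3=d-\daff=0$ and $d_4=\daff-d_2=0$. Because Property~$\proptwo$ implies Property~$\propone$, that proposition applies to $\RR$, and its dimension formulas give $\dim\zeroUrot\RR=d_3\daff+\tfrac12 d_4(\daff+d_2-1)=0$ and $\dim\Unewrot\RR=d_4(d_3+d_1-1)/2=0$. Thus $\zeroUrot\RR=\Unewrot\RR=\{0\}$ and consequently $\zeroUiso\RR=\Unewiso\RR=\Utrans\RR$.

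From this the equivalences follow almost formally. Since $\zeroUiso\RR=\Unewiso\RR$, the orthogonal projections $\pi_{\zeroUiso\RR}$ and $\pi_{\Unewiso\RR}$ coincide (an orthogonal projection being determined by its kernel), so $\zeronorm{u}{\RR}=\newnorm{u}{\RR}$ for every $u\in\UPer$; combined with Theorem~\ref{Theorem:Korn} --- applicable because $\RR$ has Property~$\proptwo$ --- this gives $\norm\fdot_\RR\sim\zeronorm\fdot\RR=\newnorm\fdot\RR$. Since $\Unewrot\RR=\{0\}$, the projection $\pi_{\Unewrot\RR}$ is the identity on $\{v\colon\RR\to\R^d\}$, so $\newnablanorm{u}{\RR}=\norm{\nabla_\RR u}_2$ for every $u\in\UPer$; combined with Theorem~\ref{Theorem:NewEquivalenceAll} this gives $\newnorm\fdot\RR\sim\newnablanorm\fdot\RR=\norm{\nabla_\RR\fdot}_2$. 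Chaining the two equivalences yields that $\norm\fdot_\RR$, $\newnorm\fdot\RR$ and $\norm{\nabla_\RR\fdot}_2$ are pairwise equivalent, which is the assertion.

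There is essentially no analytic obstacle here: the substantive content sits in Theorem~\ref{Theorem:Korn} (which in turn rests on the Tur\'an-type estimate of Lemma~\ref{Lemma:helpTheorem}) and in Theorem~\ref{Theorem:NewEquivalenceAll}. The only points requiring a little care are verifying that $\daff=d$ for a space group and reading off the dimension formulas of Proposition~\ref{Proposition:UtransUrotbig} correctly, so as to conclude that the relevant rotation spaces vanish; one should also note in passing that Property~$\proptwo$ forces $\RR$ to be a generating set of $\G$, so that $\norm{\nabla_\RR\fdot}_2$ is indeed the seminorm considered in Theorem~\ref{Theorem:nablaequivalent}.
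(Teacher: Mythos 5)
Your proposal is correct and follows essentially the same route as the paper: both arguments reduce to the observation that for a space group ($d_1=0$, $d_2=\daff=d$, hence $d_3=d_4=0$) the spaces $\zeroUrot\RR$ and $\Unewrot\RR$ collapse to $\{0\}$, after which the claim is a chaining of the already-established equivalence theorems (the paper cites Theorems~\ref{Theorem:EquivalenceAll} and~\ref{Theorem:NewEquivalenceAll} via the identities $\nablazeronorm\fdot\RR=\newnablanorm\fdot\RR=\norm{\nabla_\RR\fdot}_2$, while you route the first link through $\zeronorm\fdot\RR=\newnorm\fdot\RR$ and Theorem~\ref{Theorem:Korn}, which is the same content). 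Your derivation of the vanishing of the rotation spaces from the dimension formulas of Proposition~\ref{Proposition:UtransUrotbig} is a correct and slightly more explicit justification of what the paper states without comment.
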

\begin{proof}
Under the assumptions made we have $\zeroUrot\RR=\Unewrot\RR\subset\Utrans\RR$ and $\nablazeronorm\fdot\RR=\newnablanorm\fdot\RR=\norm{\nabla_\RR\fdot}_2$.
With Theorem~\ref{Theorem:EquivalenceAll} and Theorem~\ref{Theorem:NewEquivalenceAll} the assertion follows.
\end{proof}

\section{Two basic examples in real and Fourier space}\label{section:examples}
%
We finally work out explicitly equivalent descriptions of the seminorms $\norm\fdot_\RR$ and  $\norm{\nabla_\RR\fdot}_2$ (respectively, $\newnorm\fdot\RR$) in terms of their Fourier transform for the two basic examples of atomic chains introduced in Example~\ref{ex:elementary-chains-i}: the simple one-dimensional atomic chain in $\R^2$ with $\daff=d_2=d_1=1$ is considered in Proposition~\ref{Proposition:ExOne}, the atomic chain with non-trivial bond angles and $\daff=2$, $d_2=d_1=1$ in Proposition~\ref{Proposition:ExTwo}. While the seminorms $\newnorm\fdot\RR$ and $\norm{\nabla_\RR\fdot}_2$ will be equivalent as $d_1=1$, in both examples we will see that $\norm\fdot_\RR$ and $\newnorm\fdot\RR$ are not equivalent. 
\begin{Proposition}\label{Proposition:ExOne}
Suppose that $t=\iso{I_2}{e_2}\in\E(2)$, $\G=\angles t<\E(2)$, $x_0=0\in\R^2$ and $\RR\subset\G$ is an admissible neighborhood range of $\id$, \eg $\RR=\{\id,t,t^2\}$.
Then the seminorms $\newnorm\fdot\RR$ and $\norm{\nabla_\RR\fdot}_2$ are equivalent and there exist constants $C,c>0$ such that for all $u\in\UPer$ we have
\begin{align*}
&c\norm{\nabla_\RR u}_2^2\le\sum_{k\in[0,1)\cap\Q}\abs k_1^2\norm{\fourier u(\chi_k)}^2\le C\norm{\nabla_\RR u}_2^2
\shortintertext{and}
&c\norm u_\RR^2\le\sum_{k\in[0,1)\cap\Q}\parens[\bigg]{\abs k_1^4\abs{\fourier u_1(\chi_k)}^2+\abs k_1^2\abs[\big]{\fourier u_2(\chi_k)}^2}\le C\norm u_\RR^2,
\end{align*}
%
where $\abs\fdot_1\colon\R\to[0,\infty)$, $k\mapsto\dist(k,\Z)$ is the \emph{distance to nearest integer function}.
\end{Proposition}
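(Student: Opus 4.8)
The plan is to reduce everything to an explicit Fourier computation for the group $\G = \angles{t}$ with $t = \iso{I_2}{e_2}$ and $x_0 = 0$. Here $d = d_1 = d_2 = \daff = 1$, the translation section is $\T = \{t^n : n \in \Z\}$, $m_0 = 1$ (so $\T^{m_0} = \G$ is already isomorphic to $\Z$), and $\rot(t^n) = I_2$ for all $n$, so the linear components are all trivial. Since $\rot(\G)$ is trivial and hence finite, Corollary~\ref{Corollary:kernelseminorm} applies: $\ker(\norm\fdot_\RR) = \UIso$ and, because $d_1 = 1$, Example~\ref{Example:SpaceGroupURot} shows $\URot = \{0\}$, so in fact $\ker(\newnorm\fdot\RR) = \UIso = \UTrans$ (this also gives the equivalence of $\newnorm\fdot\RR$ and $\norm{\nabla_\RR\fdot}_2$ claimed in the first sentence, via Theorem~\ref{Theorem:NewEquivalenceAll} together with Theorem~\ref{Theorem:nablaequivalent}, since both have kernel $\UTrans \cap \UPer$). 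By Theorem~\ref{Theorem:equivalence} (resp. Theorem~\ref{Theorem:EquivalenceAll}, Theorem~\ref{Theorem:NewEquivalenceAll}) it suffices to prove the two chains of inequalities for one convenient choice of $\RR$ with Property~$\proptwo$; I would take $\RR = \{\id, t, t^2\}$, or even reduce to the discrete-derivative seminorms $\nablanorm\fdot{\RR}$ and $\newnablanorm\fdot{\RR} = \norm{\nabla_\RR\fdot}_2$ on $\RR' = \{t, t^2\}$ via Propositions~\ref{Proposition:nablanormequivalent-ohne-0} and~\ref{Proposition:nablanormequivalent-0}.

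The core step is to diagonalize via the Fourier transform of Definition~\ref{Definition:FourierPeriodic}. Since every periodic $u \in \UPer$ is $\T^N$-periodic for some $N \in \N$, the relevant characters are $\chi_k$ with $k \in [0,1) \cap \Q$, where $\chi_k(t^n) = e^{2\pi i k n}$; the Plancherel formula (Proposition~\ref{Proposition:TFplancherelmatrix}) gives $\norm u_2^2 = \sum_k \norm{\fourier u(\chi_k)}^2$, and Lemma~\ref{Lemma:PeriodicTranslation} gives $\fourier{\tau_{t^n} u}(\chi_k) = e^{-2\pi i k n}\fourier u(\chi_k)$. For $h = t^n \in \RR'$ one has $\rot(h)^T = I_2$, so $\nabla_\RR u(g)(t^n) = u(g t^n) - u(g)$; taking Fourier transforms, $\widehat{(\nabla_\RR u)(\cdot)(t^n)}(\chi_k) = (e^{2\pi i k n} - 1)\fourier u(\chi_k)$. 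Since $\abs{e^{2\pi i k n} - 1} = 2|\sin(\pi k n)|$ is comparable to $\abs{kn}_1$ (hence to $\abs k_1$ uniformly for $n \in \{1,2\}$, using $n=1$ for the lower bound), Plancherel over the product group $\T^m \times (\T^m \cap \CC_N)$ — exactly as in the display~\eqref{eq:as} in the proof of Theorem~\ref{Theorem:Korn} — yields $\norm{\nabla_\RR u}_2^2 \sim \sum_k \abs k_1^2 \norm{\fourier u(\chi_k)}^2$, which is the first chain.

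For the second chain I would compute $\pi_{\Uiso\RR}$ explicitly on $(\R^2)^\RR$. With $\rot \equiv I_2$ and $x_0 = 0$ one has $h \gdot x_0 - x_0 = 0$ for every $h$, so $\Urot\RR = \{0\}$ and $\Uiso\RR = \Utrans\RR = \{(a,a,a) : a \in \R^2\}$, the diagonal; thus $\pi_{\Uiso\RR}$ applied to $u(g\fdot)|_\RR = (u(g), u(gt), u(gt^2))$ is the deviation from the mean, so $\norm u_\RR^2$ is a positive combination of $\norm{u(gt) - u(g)}^2$-type terms — equivalently (by Proposition~\ref{Proposition:nablanormequivalent-ohne-0}) $\norm u_\RR \sim \nablanorm u{\RR'}$ with $\RR' = \{t,t^2\}$. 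Here, however, the projection is onto the complement of $\Urot{\RR'}$ rather than the full $\Uiso$; since $\Urot{\RR'} = \{0\}$ when $x_0 = 0$ but $\Urot\RR$ enters $\norm u_\RR$ only through $\pi_{\Uiso\RR}$, I must instead go directly: the first component decouples because rotations $S = \bigl(\begin{smallmatrix}0&-s\\s&0\end{smallmatrix}\bigr)$ act on $h\gdot x_0 - x_0 = 0$ trivially — wait, so actually $\Uiso\RR$ is still just the diagonal, and $\norm u_\RR^2 \sim \sum_k \abs k_1^2 \norm{\fourier u(\chi_k)}^2$ would follow, contradicting the stated $\abs k_1^4$ on the first component. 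The resolution — and the point of this example (cf.\ the text preceding Proposition~\ref{Proposition:ExOne}) — is that $\norm u_\RR$ must be computed on a \emph{larger} $\RR$ having Property~$\proptwo$, e.g.\ one containing enough powers $t^{\pm n}$ that the rigid motions seen over $\RR$ include genuine infinitesimal rotations of the chain $\{n e_2 : n \in \Z\}$ about the origin, for which $(g\fdot)|_\RR$ picks up the term $S(t^n \gdot 0) = 0$ again — no. Let me instead observe: with $x_0 = 0$, $g \gdot x_0 = \trans(g) = (0, n)^T$ for $g = t^n$, so $h \gdot x_0 - x_0 = (0,n)^T \neq 0$, and $S(h\gdot x_0 - x_0) = s(-n, 0)^T$; thus $\Urot\RR$ is the \emph{nonzero} space of affine-linear-in-$n$ profiles in the first component, $\Uiso\RR = \{(a_1 - s n, a_2) : n\} $. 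Projecting off $\Uiso\RR$ then removes the affine part of $u_1$ (second-difference survives, giving $\abs k_1^4$ after Fourier) but only the constant part of $u_2$ (first-difference survives, giving $\abs k_1^2$) — exactly the asserted weights. So the key step is this explicit identification of $\Uiso\RR$ and $\pi_{\Uiso\RR}$, after which a direct Fourier computation using $\widehat{(u(g t^n))}(\chi_k) = e^{2\pi i k n}\fourier u(\chi_k)$, the bounds $|\sin(\pi k n)| \sim \abs k_1$ for the relevant finite set of $n$, and Plancherel finishes both inequalities.

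The main obstacle is precisely the bookkeeping in the last step: getting the projection $\pi_{\Uiso\RR}$ right on a sufficiently rich $\RR$ so that the first component is measured modulo affine functions of the chain index (yielding $\abs k_1^4$) while the second is measured only modulo constants (yielding $\abs k_1^2$), and then matching the discrete second/first differences to the correct powers of $2\sin(\pi k n)$ under Fourier transform. Everything else is the standard Plancherel/translation machinery already developed in Section~\ref{section:seminorm}.
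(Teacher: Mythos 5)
Your proposal follows essentially the same route as the paper: reduce to $\RR=\{\id,t,t^2\}$ via Theorems~\ref{Theorem:EquivalenceAll} and~\ref{Theorem:nablaequivalent}, diagonalize with Plancherel and Lemma~\ref{Lemma:PeriodicTranslation}, and for the second chain identify $\Uiso\RR$ explicitly (first component affine in the chain index $n$ because $S(t^n\gdot 0)=s(-n,0)^T$, second component constant), so that minimizing over the rotation parameter leaves a second difference of $u_1$ (weight $\abs k_1^4$) and a first difference of $u_2$ (weight $\abs k_1^2$); this is exactly the paper's computation of $\inf_s f_1(k,v,s)\sim\abs k_1^2\abs{v_1}+\abs k_1\abs{v_2}$. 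Two remarks. First, your parenthetical justification that $\newnorm\fdot\RR$ and $\norm{\nabla_\RR\fdot}_2$ are equivalent ``since both have kernel $\UTrans\cap\UPer$'' is not a valid argument: equality of kernels does not imply equivalence of seminorms on the infinite-dimensional space $\UPer$ --- indeed $\norm\fdot_\RR$ and $\newnorm\fdot\RR$ share the kernel $\UIso\cap\UPer$ yet this very proposition shows they are \emph{not} equivalent. The correct reason, which you also state later, is that $\Unewrot\RR=\{0\}$ forces $\newnablanorm\fdot\RR=\norm{\nabla_\RR\fdot}_2$ identically, and then Theorem~\ref{Theorem:NewEquivalenceAll} applies. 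Second, the genuinely quantitative step --- the two-sided bound on the infimum over $s$, in particular the lower bound obtained by taking the combination $2(\text{term for }n=1)-(\text{term for }n=2)$ to eliminate $s$ and produce $\abs{\euler^{-2\pi\iu k}-1}^2\abs{v_1}$, together with the second-order Taylor estimate needed for the upper bound with the choice $s=2\pi\iu k v_1$ --- is only gestured at as ``bookkeeping''; it is the actual content of the proof, though your description makes clear you have the right mechanism in mind. (Also, $d=2$ here, not $1$.)
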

\begin{proof}
As noted in Example~\ref{ex:elementary-chains-ii}, the set $\{\id,t,t^2\}$ is an admissible neighborhood range of $\id$ and by Theorem~\ref{Theorem:nablaequivalent} and Theorem~\ref{Theorem:EquivalenceAll} without loss of generality we let $\RR=\{\id,t,t^2\}$.
Since $\Unewrot\RR=\{0\}$, we have $\newnablanorm\fdot\RR=\norm{\nabla_\RR\fdot}_2$ and thus the seminorms $\newnorm\fdot\RR$ and $\norm{\nabla_\RR\fdot}_2$ are equivalent by Theorem~\ref{Theorem:NewEquivalenceAll}.

Remark~\ref{rmk:dualspacelattice} shows that $\dual\G=\set{\chi_k}{k\in[0,1)}$, where $\chi_k\colon\G\to\C$ is given by $\chi_k(t^n)=\euler^{2\pi\iu nk}$ for all $n\in\Z$ with $k$ such that $\chi(t)=\euler^{2\pi\iu k}$ and that $\EE=\set{\chi_k}{k\in[0,1)\cap\Q}$.

There is a constant $c_T\in(0,1)$ such that for all $k\in[0,1)$ and $n\in\{1,2\}$ we have 
\begin{align}
&c_T\abs k_1\le\abs[\big]{\euler^{-2\pi\iu k}-1},\label{eq:TOneA}\\
&c_T\abs[\big]{\euler^{-2\pi\iu nk}-1}\le \abs k_1,\label{eq:TThreeA}\\
\shortintertext{and}
&c_T\abs[\big]{\euler^{-2\pi\iu nk}-1+2\pi\iu nk}\le\abs k_1^2.\label{eq:TFiveA}
\end{align}
Indeed, this is clear on compact subintervals of $(0,1)$ since $|\euler^{-2\pi\iu k}-1|, \abs k_1>0$ for $k\in(0,1)$ and it follows from a Taylor expansion of $k \mapsto \euler^{-2\pi\iu nk}$ in a neighborhood of $\{0,1\}$.
For all $u\in\UPer$ we have
\begin{align}
\norm{\nabla_\RR u}_2^2&=\sum_{\chi\in\EE}\norm[\big]{\fourier{\nabla_\RR u}(\chi)}^2\nonumber\\
&=\sum_{k\in[0,1)\cap\Q}\norm[\big]{\parens{\chi_k(h)^{-1}\fourier u(\chi_k)-\fourier u(\chi_k)}_{h\in\RR}}^2\nonumber\\
&=\sum_{k\in[0,1)\cap\Q}\sum_{n=1}^2\abs[\big]{\euler^{-2\pi\iu nk}-1}^2\norm[\big]{\fourier u(\chi_k)}^2,\label{eq:NablaOneA}
\end{align}
where we used Proposition~\ref{Proposition:TFplancherelmatrix} in the first step and Lemma~\ref{Lemma:PeriodicTranslation} in the second step.
Equations \eqref{eq:TOneA}, \eqref{eq:TThreeA} and \eqref{eq:NablaOneA} imply the first assertion.

Now we show the second assertion.
Let $\RR'=\{t,t^2\}$.
By Proposition~\ref{Proposition:nablanormequivalent-ohne-0} the seminorms $\norm\fdot_\RR$ and $\nablanorm\fdot{\RR'}$ are equivalent, \ie there exist some constants $C,c>0$ such that
\begin{equation}\label{eq:(a)A}
c\norm\fdot_\RR\le\nablanorm\fdot{\RR'}\le C\norm\fdot_\RR.
\end{equation}
We define the linear map
\begin{align*}
g_{\RR'}\colon&\Skew(2,\C)\to\C^{2\times\abs{\RR'}}\\
&S\mapsto\parens[\big]{S(h\cdot x_0-x_0)}_{h\in\RR'}.
\end{align*}
For all $u\in\UPer$ we have
\begin{align}
\nablanorm u{\RR'}^2&=\inf\set[\Big]{\norm{\nabla_{\RR'}u-g_{\RR'}\circ v}_2^2}{v\in\Per(\G,\Skew(2,\C))}\nonumber\\
&=\inf\set[\bigg]{\sum_{\chi\in\EE}\norm[\big]{\fourier{\nabla_{\RR'}u}(\chi)-g_{\RR'}\circ\tilde v(\chi)}^2}{\tilde v\in\bigoplus_{\chi\in\EE}\Skew(2,\C)}\nonumber\\
&=\sum_{\chi\in\EE}\inf\set[\Big]{\norm[\big]{\fourier{\nabla_{\RR'}u}(\chi)-g_{\RR'}(S)}^2}{S\in\Skew(2,\C)}\nonumber\\
&=\sum_{k\in[0,1)\cap\Q}\inf\set[\bigg]{\norm[\big]{\parens[\big]{\chi_k(h)^{-1}\fourier u(\chi_k)-\fourier u(\chi_k)-\parens[\big]{\begin{smallmatrix}0&-s\\s&0\end{smallmatrix}}(h\cdot x_0-x_0)}_{h\in\RR'}}^2}{s\in\C}\nonumber\\
&=\sum_{k\in[0,1)\cap\Q}\inf\set[\bigg]{\sum_{n=1}^2\norm[\big]{(\euler^{-2\pi\iu nk}-1)\fourier u(\chi_k)+nse_1}^2}{s\in\C},\label{eq:(b)A}
\end{align}
where we used Proposition~\ref{Proposition:TFplancherelmatrix} in the second step and Lemma~\ref{Lemma:PeriodicTranslation} in the fourth step.

It holds
\begin{equation}\label{eq:(c)A}
\sum_{i=1}^n a_i^2\le\parens[\bigg]{\sum_{i=1}^n a_i}^2\le n\sum_{i=1}^na_i^2
\end{equation}
for all $n\in\N$ and $a_1,\dots,a_n\ge0$.

We define the functions
\begin{align*}
\begin{aligned}
f_1\colon&[0,1)\times\C^2\times\C\to[0,\infty),\quad\!\
(k,v,s)\mapsto\sum_{n=1}^2\norm[\big]{(\euler^{-2\pi\iu nk}-1)v+nse_1}
\quad\text{and}\\
f_2\colon&[0,1)\times\C^2\to[0,\infty),\qquad\qquad
(k,v)\mapsto\abs k_1^2\abs{v_1}+\abs k_1\abs{v_2}.
\end{aligned}
\end{align*}
By \eqref{eq:(a)A}, \eqref{eq:(b)A} and \eqref{eq:(c)A} it suffices so show that there exist some constants $C,c>0$ such that for all $(k,v)\in[0,1)\times\C^2$ we have
\begin{equation}\label{eq:(star)A}
c\inf_{s\in\C}f_1(k,v,s)\le f_2(k,v)\le C\inf_{s\in\C}f_1(k,v,s).
\end{equation}
First we show the left inequality of \eqref{eq:(star)A}.
By \eqref{eq:TThreeA} and \eqref{eq:TFiveA} for all $(k,v)\in[0,1)\times\C^2$ we have
\begin{align*}
\inf_{s\in\C}f_1(k,v,s)&\le f_1(k,v,2\pi\iu kv_1)\\
&\le\sum_{n=1}^2\parens[\Big]{\abs[\big]{\euler^{-2\pi\iu nk}-1+2\pi\iu nk}\abs{v_1}+\abs[\big]{\euler^{-2\pi\iu nk}-1}\abs{v_2}}\\
&\le\tfrac2{c_T}f_2(k,v).
\end{align*}
Now we show the right inequality of \eqref{eq:(star)A}.
Let $(k,v,s)\in[0,1)\times\C^2\times\C$.
By \eqref{eq:TOneA} we have
\begin{align}
f_1(k,v,s)&\ge\abs[\big]{\euler^{-2\pi\iu k}v_1-v_1+s}+\tfrac12\abs[\big]{\euler^{-4\pi\iu k}v_1-v_1+2s}\nonumber\\
&\ge\tfrac12\abs[\big]{2(\euler^{-2\pi\iu k}v_1-v_1+s)-(\euler^{-4\pi\iu k}v_1-v_1+2s)}\nonumber\\
&=\tfrac12\abs[\big]{\euler^{-2\pi\iu k}-1}^2\abs{v_1}\nonumber\\
&\ge \tfrac{c_T^2}2\abs k_1^2\abs{v_1}\label{eq:(2a)A}
\end{align}
and
\begin{equation}\label{eq:(2)A}
f_1(k,v,s)\ge\abs[\big]{\euler^{-2\pi\iu k}-1}\abs{v_2}\ge c_T\abs k_1\abs{v_2}.
\end{equation}
By \eqref{eq:(2a)A} and \eqref{eq:(2)A} we have
\[f_1(k,v,s)\ge\tfrac{c_T^2}4 f_2(k,v).\qedhere\]
\end{proof}
\begin{Proposition}\label{Proposition:ExTwo}
Suppose that $t=\iso{\parens{\begin{smallmatrix}-1&0\\0&1\end{smallmatrix}}}{e_2}\in\E(2)$, $\G=\angles t<\E(2)$, $x_0=e_1\in\R^2$ and $\RR\subset\G$ is an admissible neighborhood range of $\id$, \eg $\RR=\{t^0,\dots,t^3\}$.
Then the seminorms $\newnorm\fdot\RR$ and $\norm{\nabla_\RR\fdot}_2$ are equivalent and there exist constants $C,c>0$ such that for all $u\in\UPer$ we have
\begin{align*}
&c\norm{\nabla_\RR u}_2^2\le\sum_{k\in[0,1)\cap\Q}\parens[\bigg]{\abs{k-\tfrac12}_1^2\abs{\fourier u_1(\chi_k)}^2+\abs k_1^2\abs{\fourier u_2(\chi_k)}^2}\le C\norm{\nabla_\RR u}_2^2
\intertext{and}
&c\norm u_\RR^2\le\sum_{k\in[0,1)\cap\Q}\parens[\bigg]{\abs{k-\tfrac12}_1^4\abs{\fourier u_1(\chi_k)}^2+\abs k_1^2\abs[\big]{2\pi\iu(k-\tfrac12)\fourier u_1(\chi_k)-\fourier u_2(\chi_k)}^2}\le C\norm u_\RR^2,
\end{align*}
%
where $\abs\fdot_1\colon\R\to[0,\infty)$, $k\mapsto\dist(k,\Z)$ is the \emph{distance to nearest integer function}.
\end{Proposition}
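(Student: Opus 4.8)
The plan is to follow the proof of Proposition~\ref{Proposition:ExOne} step by step, the new features being that here $\rot(t)=\diag(-1,1)$ acts nontrivially and $x_0=e_1\neq 0$. First I would record the structural data: $d=2$, $d_1=d_2=1$, $\daff=2$, and a short induction gives $t^n=\iso{\diag((-1)^n,1)}{ne_2}$, hence $\rot(t^n)=\diag((-1)^n,1)$ and $t^n\cdot x_0-x_0=((-1)^n-1)e_1+ne_2$. Since $\G=\angles t$ is abelian, $m_0=1$ and $\T^{m_0}=\G$, so the Fourier transform of Section~\ref{subsection:structure} lives on $\G$ itself, with $\dual\G=\set{\chi_k}{k\in[0,1)}$, $\chi_k(t^n)=\euler^{2\pi\iu nk}$, and $\EE=\set{\chi_k}{k\in[0,1)\cap\Q}$. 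As $\{t^0,\dots,t^3\}$ has Property~$\proptwo$ (take $\RR'=\{\id,t\}$, $\RR''=\{\id,t,t^2\}$), Theorem~\ref{Theorem:nablaequivalent} and Theorem~\ref{Theorem:EquivalenceAll} allow me to assume $\RR=\{t^0,\dots,t^3\}$; and since $d_1=1$, Proposition~\ref{Proposition:UtransUrotbig} (equivalently Example~\ref{Example:SpaceGroupURot}) gives $\Unewrot\RR=\{0\}$, hence $\newnablanorm\fdot\RR=\norm{\nabla_\RR\fdot}_2$ and the equivalence of $\newnorm\fdot\RR$ and $\norm{\nabla_\RR\fdot}_2$ follows from Theorem~\ref{Theorem:NewEquivalenceAll}.

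For the first estimate I would compute, via Proposition~\ref{Proposition:TFplancherelmatrix}, Lemma~\ref{Lemma:PeriodicTranslation} and the identity $\euler^{-2\pi\iu nk}-(-1)^n=(-1)^n(\euler^{-2\pi\iu n(k-1/2)}-1)$,
\[\norm{\nabla_\RR u}_2^2=\sum_{k\in[0,1)\cap\Q}\sum_{n=1}^3\parens[\Big]{\abs[\big]{\euler^{-2\pi\iu n(k-1/2)}-1}^2\abs{\fourier u_1(\chi_k)}^2+\abs[\big]{\euler^{-2\pi\iu nk}-1}^2\abs{\fourier u_2(\chi_k)}^2},\]
and then the elementary bounds $c_T\abs\theta_1\le\abs{\euler^{-2\pi\iu\theta}-1}$ and $\abs{\euler^{-2\pi\iu n\theta}-1}\le 2\pi n\abs\theta_1$ (a suitable $c_T\in(0,1)$, $\theta\in\R$, $n\in\N$; cf.\ the proof of Proposition~\ref{Proposition:ExOne}) give the first claimed two-sided bound, keeping only the $n=1$ summands for the lower bound and estimating all three crudely for the upper bound. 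For $\norm\fdot_\RR$ I would invoke Proposition~\ref{Proposition:nablanormequivalent-ohne-0} to pass to $\nablanorm\fdot{\RR'}$ with $\RR'=\{t,t^2,t^3\}$ and, as in Proposition~\ref{Proposition:ExOne}, express it in Fourier space as $\sum_{k\in[0,1)\cap\Q}\inf_{s\in\C}\norm*{(\euler^{-2\pi\iu nk}\fourier u(\chi_k)-\rot(t^n)^T\fourier u(\chi_k))_{n=1}^3-g_{\RR'}\parens[\big]{\begin{smallmatrix}0&-s\\s&0\end{smallmatrix}}}^2$, with $g_{\RR'}$ as in the proof of Theorem~\ref{Theorem:Korn}; a direct computation gives that $g_{\RR'}\parens[\big]{\begin{smallmatrix}0&-s\\s&0\end{smallmatrix}}$ has first coordinates $(s,-2s,3s)$ and second coordinates $(-2s,0,-2s)$ over $n=1,2,3$. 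Using $\sum a_i^2\le(\sum a_i)^2\le q\sum a_i^2$, the second estimate reduces to producing constants $C,c>0$ with
\[c\inf_{s\in\C}f_1(k,v,s)\le f_2(k,v)\le C\inf_{s\in\C}f_1(k,v,s)\qquad\text{for all }(k,v)\in[0,1)\times\C^2,\]
where $f_1(k,v,s)=\sum_{n=1}^3\norm*{\bigl((\euler^{-2\pi\iu nk}-(-1)^n)v_1+(-1)^nns,\;(\euler^{-2\pi\iu nk}-1)v_2-((-1)^n-1)s\bigr)}$ and $f_2(k,v)=\abs{k-\tfrac12}_1^2\abs{v_1}+\abs k_1\abs{2\pi\iu(k-\tfrac12)v_1-v_2}$.

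Writing $A_n,B_n$ for the first and second scalar coordinate of the $n$-th block of $f_1$, the right inequality would be obtained by extracting, for arbitrary $s$, the two $s$-free combinations $2A_1+A_2=(\euler^{-2\pi\iu k}+1)^2v_1$ (which, using $\abs{\euler^{-2\pi\iu k}+1}=\abs{\euler^{-2\pi\iu(k-1/2)}-1}$, yields $f_1\gtrsim\abs{k-\tfrac12}_1^2\abs{v_1}$) and $A_2-B_1=(\euler^{-2\pi\iu k}-1)\bigl((\euler^{-2\pi\iu k}+1)v_1-v_2\bigr)$ (which yields $f_1\gtrsim\abs k_1\abs{(\euler^{-2\pi\iu k}+1)v_1-v_2}$); the Taylor estimate $\abs{(\euler^{-2\pi\iu k}+1)-2\pi\iu(k-\tfrac12)}\lesssim\abs{k-\tfrac12}_1^2$ combined with the first bound then replaces $(\euler^{-2\pi\iu k}+1)v_1$ by $2\pi\iu(k-\tfrac12)v_1$, giving $f_2\le Cf_1$. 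For the left inequality I would evaluate the infimum at $s=2\pi\iu(k-\tfrac12)v_1$: substituting $v_2=2\pi\iu(k-\tfrac12)v_1-w$ and using $\euler^{-2\pi\iu nk}=(-1)^n\euler^{-2\pi\iu n(k-1/2)}$ together with the Taylor bounds $\abs{\euler^{-2\pi\iu n(k-1/2)}-1+2\pi\iu n(k-\tfrac12)}\lesssim\abs{k-\tfrac12}_1^2$ and $\abs{\euler^{-2\pi\iu nk}-1}\le 2\pi n\abs k_1$, each of the six entries of the block vector is seen to be bounded by a fixed multiple of $\abs{k-\tfrac12}_1^2\abs{v_1}+\abs k_1\abs w=f_2(k,v)$, whence $f_1\bigl(k,v,2\pi\iu(k-\tfrac12)v_1\bigr)\le Cf_2(k,v)$. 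The only genuinely non-routine step is to spot the coupled quantity $2\pi\iu(k-\tfrac12)\fourier u_1-\fourier u_2$ — which encodes the infinitesimal rotation of the bent chain about $\{0\}\times\R$ — and the two $s$-eliminating combinations $2A_1+A_2$ and $A_2-B_1$; with these in hand, the remaining estimates are Taylor-expansion bookkeeping identical to that in Proposition~\ref{Proposition:ExOne}.
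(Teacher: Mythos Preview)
Your proof is correct and follows the paper's overall strategy: reduce to $\RR=\{t^0,\dots,t^3\}$ via Theorems~\ref{Theorem:nablaequivalent} and~\ref{Theorem:EquivalenceAll}, apply Plancherel and Lemma~\ref{Lemma:PeriodicTranslation} to express both seminorms in Fourier space, and then establish the pointwise two-sided symbol estimate $c\inf_{s}f_1\le f_2\le C\inf_s f_1$. The structural preliminaries, the computation of $g_{\RR'}$, and the derivation of $f_1$, $f_2$ all match the paper.

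There is one genuine difference in how the pointwise estimate is carried out. The paper splits into the cases $k\in[0,\tfrac14]\cup[\tfrac34,1)$ and $k\in(\tfrac14,\tfrac34)$: for the upper bound on $\inf_s f_1$ it evaluates at $s=0$ in the first case and $s=v_2$ in the second; for the lower bound it uses (in your notation) the $s$-free combination $B_1-B_3$ (yielding $\abs{k}_1\abs{k-\tfrac12}_1\abs{v_2}$) in the first case and $A_2-B_1$ only in the second. Your argument instead uses the single test value $s=2\pi\iu(k-\tfrac12)v_1$ and the single pair of combinations $2A_1+A_2$, $A_2-B_1$ uniformly in $k$; the Taylor bound $\abs{(\euler^{-2\pi\iu k}+1)-2\pi\iu(k-\tfrac12)}\lesssim\abs{k-\tfrac12}_1^2$ together with $\abs k_1\le\tfrac12$ then absorbs the resulting error term into the already established $\abs{k-\tfrac12}_1^2\abs{v_1}$ bound. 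This is a legitimate and slightly cleaner route: your choice of $s$ is exactly the one suggested by the target symbol $f_2$, and it makes the coupled quantity $2\pi\iu(k-\tfrac12)\fourier u_1-\fourier u_2$ appear directly rather than through case-dependent manipulations. The paper's case split, on the other hand, keeps each individual inequality entirely elementary (no Taylor correction needed inside the lower bound in Case~1), at the cost of treating the two regimes separately.
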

\begin{proof}
As noted in Example~\ref{ex:elementary-chains-ii}, the set $\{t^0,\dots,t^3\}$ is an admissible neighborhood range of $\id$ and by Theorem~\ref{Theorem:nablaequivalent} and Theorem~\ref{Theorem:EquivalenceAll} without loss of generality we let $\RR=\{t^0,\dots,t^3\}$.
Since $\Unewrot\RR=\{0\}$, we have $\newnablanorm\fdot\RR=\norm{\nabla_\RR\fdot}_2$ and thus the seminorms $\newnorm\fdot\RR$ and $\norm{\nabla_\RR\fdot}_2$ are equivalent by Theorem~\ref{Theorem:NewEquivalenceAll}.

As in the previous example we have $\EE=\set{\chi_k}{k\in[0,1)\cap\Q}$, where $\chi_k\in\dual\G$ is given by $\chi_k(t^n)=\euler^{2\pi\iu nk}$ for all $n\in\Z$, cf.\ Remark~\ref{rmk:dualspacelattice}. Since $\set{k\in[0,1)}{\euler^{-2\pi\iu k}=1}=\{0\}$, $\set{k\in[0,1)}{\euler^{-2\pi\iu k}=-1}=\{\frac12\}$ and by Taylor's theorem, there exists a constant $c_T\in(0,1)$ such that for all $k\in[0,1)$ and $n\in\{1,2,3\}$ we have
\begin{align}
&c_T\abs k_1\le\abs[\big]{\euler^{-2\pi\iu k}-1},\label{eq:TOne}\\
&c_T\abs[\big]{k-\tfrac12}_1\le\abs[\big]{\euler^{-2\pi\iu k}+1},\label{eq:TTwo}\\
&c_T\abs[\big]{\euler^{-2\pi\iu nk}-1}\le \abs k_1,\label{eq:TThree}\\
&c_T\abs[\big]{\euler^{-2\pi\iu nk}-(-1)^n}\le \abs[\big]{k-\tfrac12}_1,\label{eq:TFour}
\shortintertext{and}
&c_T\abs[\big]{\euler^{-2\pi\iu nk}-(-1)^n+(-1)^n2\pi\iu n(k-\tfrac12)}\le\abs[\big]{k-\tfrac12}_1^2.\label{eq:TFive}
\end{align}
For all $u\in\UPer$ we have
\begin{align}
\norm{\nabla_\RR u}_2^2&=\sum_{\chi\in\EE}\norm[\big]{\fourier{\nabla_\RR u}(\chi)}^2\nonumber\\
&=\sum_{k\in[0,1)\cap\Q}\norm[\big]{\parens{\chi_k(h)^{-1}\fourier u(\chi_k)-\rot(h)^T\fourier u(\chi_k)}_{h\in\RR}}^2\nonumber\\
&=\sum_{k\in[0,1)\cap\Q}\sum_{n=1}^3\norm[\Big]{\euler^{-2\pi\iu nk}\fourier u(\chi_k)-\parens[\big]{\begin{smallmatrix}-1&0\\0&1\end{smallmatrix}}^n\fourier u(\chi_k)}^2\nonumber\\
&=\sum_{k\in[0,1)\cap\Q}\sum_{n=1}^3\parens[\Big]{\abs[\big]{\euler^{-2\pi\iu nk}-(-1)^n}^2\abs[\big]{\fourier u_1(\chi_k)}^2+\abs[\big]{\euler^{-2\pi\iu nk}-1}^2\abs[\big]{\fourier u_2(\chi_k)}^2},\label{eq:NablaOne}
\end{align}
where we used Proposition~\ref{Proposition:TFplancherelmatrix} in the first step and Lemma~\ref{Lemma:PeriodicTranslation} in the second step.
Equations \eqref{eq:TOne}, \eqref{eq:TTwo}, \eqref{eq:TThree}, \eqref{eq:TFour} and \eqref{eq:NablaOne} imply the first assertion.

Now we show the second assertion.
Let $\RR'=\{t^1,t^2,t^3\}$.
By Proposition~\ref{Proposition:nablanormequivalent-ohne-0} the seminorms $\norm\fdot_\RR$ and $\nablanorm\fdot{\RR'}$ are equivalent, \ie there exist some constants $C,c>0$ such that
\begin{equation}\label{eq:(a)}
c\norm\fdot_\RR\le\nablanorm\fdot{\RR'}\le C\norm\fdot_\RR.
\end{equation}
We define the linear map
\begin{align*}
g_{\RR'}\colon&\Skew(2,\C)\to\C^{2\times\abs{\RR'}}\\
&S\mapsto\parens[\big]{\rot(h)^TS(h\cdot x_0-x_0)}_{h\in\RR'}.
\end{align*}
For all $u\in\UPer$ we have
\begin{align}
&\nablanorm u{\RR'}^2=\inf\set[\Big]{\norm{\nabla_{\RR'}u-g_{\RR'}\circ v}_2^2}{v\in\Per(\G,\Skew(2,\C))}\nonumber\\
&\quad=\inf\set[\bigg]{\sum_{\chi\in\EE}\norm[\big]{\fourier{\nabla_{\RR'}u}(\chi)-g_{\RR'}\circ\tilde v(\chi)}^2}{\tilde v\in\bigoplus_{\chi\in\EE}\Skew(2,\C)}\nonumber\\
&\quad=\sum_{\chi\in\EE}\inf\set[\Big]{\norm[\big]{\fourier{\nabla_{\RR'}u}(\chi)-g_{\RR'}(S)}^2}{S\in\Skew(2,\C)}\nonumber\\
&\quad=\sum_{k\in[0,1)\cap\Q}\inf_{s\in\C}\norm[\big]{\parens[\big]{\chi_k(h)^{-1}\fourier u(\chi_k)-\rot(h)^T\fourier u(\chi_k)-\rot(h)^T\parens[\big]{\begin{smallmatrix}0&-s\\s&0\end{smallmatrix}}(h\cdot x_0-x_0)}_{h\in\RR'}}^2\nonumber\\
&\quad=\sum_{k\in[0,1)\cap\Q}\inf\set[\bigg]{\sum_{n=1}^3\norm[\Big]{\euler^{-2\pi\iu nk}\fourier u(\chi_k)-\parens[\big]{\begin{smallmatrix}-1&0\\0&1\end{smallmatrix}}^n\fourier u(\chi_k)-\parens[\Big]{\begin{smallmatrix}(-1)^{n+1}ns\\((-1)^n-1)s\end{smallmatrix}}}^2}{s\in\C},\label{eq:(b)}
\end{align}
where we used Proposition~\ref{Proposition:TFplancherelmatrix} in the second step and Lemma~\ref{Lemma:PeriodicTranslation} in the fourth step.

It holds
\begin{equation}\label{eq:(c)}
\sum_{i=1}^n a_i^2\le\parens[\bigg]{\sum_{i=1}^n a_i}^2\le n\sum_{i=1}^na_i^2
\end{equation}
for all $n\in\N$ and $a_1,\dots,a_n\ge0$.

We define the functions
\begin{align*}
f_1\colon&[0,1)\times\C^2\times\C\to[0,\infty)\\
&(k,v,s)\mapsto\sum_{n=1}^3\norm[\Big]{\euler^{-2\pi\iu nk}v-\parens[\big]{\begin{smallmatrix}-1&0\\0&1\end{smallmatrix}}^nv-\parens[\Big]{\begin{smallmatrix}(-1)^{n+1}ns\\((-1)^n-1)s\end{smallmatrix}}}
\shortintertext{and}
f_2\colon&[0,1)\times\C^2\to[0,\infty)\\
&(k,v)\mapsto\abs{k-\tfrac12}_1^2\abs{v_1}+\abs k_1\abs{2\pi\iu(k-\tfrac12)v_1-v_2}.
\end{align*}
By \eqref{eq:(a)}, \eqref{eq:(b)} and \eqref{eq:(c)} it suffices so show that there exist some constants $C,c>0$ such that for all $(k,v)\in[0,1)\times\C^2$ we have
\begin{equation}\label{eq:(star)}
c\inf_{s\in\C}f_1(k,v,s)\le f_2(k,v)\le C\inf_{s\in\C}f_1(k,v,s).
\end{equation}
First we show the right inequality of \eqref{eq:(star)}.
Let $c_R>0$ be small enough, \eg $c_R=\tfrac{c_T^3}{400}$.
Let $(k,v,s)\in[0,1)\times\C^2\times\C$.
By \eqref{eq:TOne} and \eqref{eq:TTwo} we have
\begin{align}
f_1(k,v,s)&\ge\abs[\big]{\euler^{-2\pi\iu k}v_1+v_1-s}+\tfrac12\abs[\big]{\euler^{-4\pi\iu k}v_1-v_1+2s}\nonumber\\
&\ge\tfrac12\abs[\big]{2(\euler^{-2\pi\iu k}v_1+v_1-s)+\euler^{-4\pi\iu k}v_1-v_1+2s}\nonumber\\
&=\tfrac12\abs[\big]{\euler^{-2\pi\iu k}+1}^2\abs{v_1}\nonumber\\
&\ge \tfrac{c_T^2}2\abs{k-\tfrac12}_1^2\abs{v_1}\label{eq:(2a)}
\end{align}
and
\begin{align}
f_1(k,v,s)&\ge\sum_{n\in\{1,3\}}\abs[\big]{\euler^{-2\pi\iu nk}v_2-v_2+2s}\nonumber\\
&\ge\abs[\big]{\euler^{-2\pi\iu k}v_2-v_2+2s-(\euler^{-6\pi\iu k}v_2-v_2+2s)}\nonumber\\
&=\abs[\big]{\euler^{-2\pi\iu k}+1}\abs[\big]{\euler^{-2\pi\iu k}-1}\abs{v_2}\nonumber\\
&\ge c_T^2\abs k_1\abs{k-\tfrac12}_1\abs{v_2}.\label{eq:(2)}
\end{align}
\begin{case}[Case 1: $k\in\bracks{0,\tfrac14}\cup[\tfrac34,1)$] \ \\
Since $k\in\bracks{0,\tfrac14}\cup[\tfrac34,1)$, we have $\abs{k-\tfrac12}_1\ge\tfrac14$.
By \eqref{eq:(2a)} and \eqref{eq:(2)} we have
\[f_1(k,v,s)\ge c_R\abs{k-\tfrac12}_1^2\abs{v_1}+\pi c_R\abs{v_1}+c_R\abs k_1\abs{v_2}\ge c_Rf_2(k,v),\]
where in the last step we used the triangle inequality.
\end{case}
\begin{case}[Case 2: $k\in(\frac14,\tfrac34)$] \ \\
Since $k\in(\tfrac14,\tfrac34)$, we have $\abs k_1\ge\tfrac14$.
By \eqref{eq:TOne} and \eqref{eq:TFive} we have
\begin{align}
f_1(k,v,s)&\ge\abs[\big]{(\euler^{-4\pi\iu k}-1)v_1+2s}+\abs[\big]{(\euler^{-2\pi\iu k}-1)v_2+2s}\nonumber\\
&\ge\abs[\big]{(\euler^{-4\pi\iu k}-1)v_1+2s-((\euler^{-2\pi\iu k}-1)v_2+2s)}\nonumber\\
&=\abs[\big]{\euler^{-2\pi\iu k}-1}\abs[\big]{(\euler^{-2\pi\iu k}+1)v_1-v_2}\nonumber\\
&\ge\tfrac{c_T}4\abs[\big]{(\euler^{-2\pi\iu k}+1)v_1-v_2}\nonumber\\
&\ge\tfrac{c_T}4\abs[\big]{2\pi\iu(k-\tfrac12)v_1-v_2}-\tfrac{c_T}4\abs[\big]{\euler^{-2\pi\iu k}+1-2\pi\iu(k-\tfrac12)}\abs{v_1}\nonumber\\
&\ge\tfrac{c_T}4\abs[\big]{2\pi\iu(k-\tfrac12)v_1-v_2}-\tfrac14\abs[\big]{k-\tfrac12}_1^2\abs{v_1}.\label{eq:(7)}
\end{align}
By \eqref{eq:(2a)} and \eqref{eq:(7)} we have $f_1(k,v,s)\ge c_Rf_2(k,v)$.
\end{case}
Now we show the left inequality of \eqref{eq:(star)}.
Let $C_L>0$ be large enough, \eg $C_L=\tfrac{120}{c_T}$.
Let $(k,v)\in[0,1)\times\C^2$.
We have
\begin{equation}\label{eq:(A)}
f_2(k,v)\ge\abs k_1\abs[\big]{k-\tfrac12}_1\abs[\big]{2\pi\iu(k-\tfrac12)v_1-v_2}\ge\abs k_1\abs[\big]{k-\tfrac12}_1\abs{v_2}-\pi\abs[\big]{k-\tfrac12}_1^2\abs{v_1}.
\end{equation}
By \eqref{eq:(A)} and the definition of $f_2$, we have
\begin{equation}\label{eq:(C)}
f_2(k,v)\ge \tfrac15\abs k_1\abs[\big]{k-\tfrac12}_1\abs{v_2}.
\end{equation}
\begin{case}[Case 1: $k\in\bracks{0,\tfrac14}\cup[\tfrac34,1)$] \ \\
Since $k\in[0,\tfrac14]\cup[\tfrac34,1)$, we have $\abs{k-\tfrac12}_1\ge\tfrac14$.
We have
\begin{align*}
\inf_{s\in\C}f_1(k,v,s)&\le f_1(k,v,0)\\
&\le 6\abs{v_1}+\abs{v_2}\sum_{n=1}^3\abs[\big]{\euler^{-2\pi\iu nk}-1}\\
&=6\abs{v_1}+\abs[\big]{\euler^{-2\pi\iu k}-1}\abs{v_2}\sum_{n=1}^3\abs[\bigg]{\sum_{m=0}^{n-1}\euler^{-2\pi\iu mk}}\\
&\le6\abs{v_1}+\tfrac6{c_T}\abs k_1\abs{v_2}\\
&\le C_Lf_2(k,v),
\end{align*}
where we used \eqref{eq:TThree} in the second to last step and \eqref{eq:(C)} in the last step.
\end{case}
\begin{case}[Case 2: $k\in(\frac14,\tfrac34)$] \ \\
Since $k\in(\tfrac14,\tfrac34)$, we have $\abs k_1\ge\tfrac14$.
By \eqref{eq:TFive} and \eqref{eq:TFour} we have
\begin{align}
&\inf_{s\in\C}f_1(k,v,s)\le f_1(k,v,v_2)\nonumber\\
&\quad\le\sum_{n=1}^3\parens[\Big]{\abs[\big]{(\euler^{-2\pi\iu nk}-(-1)^n)v_1+(-1)^nnv_2}+\abs[\big]{\euler^{-2\pi\iu nk}-(-1)^n}\abs{v_2}}\nonumber\\
&\quad\le\sum_{n=1}^3\parens[\Big]{\abs[\big]{\euler^{-2\pi\iu nk}-(-1)^n+(-1)^n2\pi\iu n(k-\tfrac12)}\abs{v_1}+n\abs[\big]{2\pi\iu(k-\tfrac12)v_1-v_2}\nonumber\\
&\quad\qquad+\abs[\big]{\euler^{-2\pi\iu nk}-(-1)^n}\abs{v_2}}\nonumber\\
&\quad\le \tfrac6{c_T}\parens[\Big]{\abs[\big]{k-\tfrac12}_1^2\abs{v_1}+\abs[\big]{2\pi\iu(k-\tfrac12)v_1-v_2}+\abs[\big]{k-\tfrac12}_1\abs{v_2}}.\label{eq:(B)}
\end{align}
By \eqref{eq:(C)} and \eqref{eq:(B)} we have
\[\inf_{s\in\C}f_1(k,v,s)\le C_Lf_2(k,v).\qedhere\]
\end{case}
\end{proof}

\begin{appendix}
%
\section{Selected auxiliary results}
%
For easy reference we collect a couple of auxiliary results in this appendix. 
It is well-known that commuting orthogonal matrices are simultaneously quasidiagonalisable, see \eg \cite[Corollary 2.5.11(c), Theorem 2.5.15]{Horn2013}:
\begin{Theorem}\label{Theorem:simultaneousdiagonalisation}
Let $\mathcal S\subset\O(n)$ be a nonempty commuting family of real orthogonal matrices. Then there exist a real orthogonal matrix $Q$ and a nonnegative integer $q$ such that, for each $A\in \mathcal S$, $Q^TAQ$ is a real quasidiagonal matrix of the form
\[\Lambda(A)\oplus R(\theta_1(A))\oplus\dots\oplus R(\theta_q(A))\]
in which each $\Lambda(A)=\diag(\pm1,\dots,\pm1)\in\R^{(n-2q)\times(n-2q)}$,
$R(\theta)=(\begin{smallmatrix}\cos \theta&-\sin\theta\\ \sin\theta&\cos\theta\end{smallmatrix})$ is the rotation matrix and each $\theta_j(A)\in[0,2\pi)$.
\end{Theorem}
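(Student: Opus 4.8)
The plan is to complexify. Every real orthogonal matrix is unitary, hence normal, on $\C^n$, and the family $\mathcal S$ commutes; since the commutative subalgebra of $\C^{n\times n}$ generated by $\mathcal S$ is finite dimensional, it suffices to simultaneously unitarily diagonalise a finite generating subset, and one obtains an orthogonal decomposition $\C^n=\bigoplus_{\omega}W_\omega$ into joint eigenspaces, indexed by the finitely many characters $\omega\colon\mathcal S\to\{z\in\C:|z|=1\}$ that actually occur, with $Av=\omega(A)v$ for all $v\in W_\omega$, $A\in\mathcal S$. The entire construction then consists of choosing a \emph{real} orthonormal basis of $\R^n$ adapted to this decomposition.

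First I would record the conjugation symmetry: since each $A\in\mathcal S$ is real, $v\in W_\omega$ implies $\overline v\in W_{\overline\omega}$, where $\overline\omega(A)=\overline{\omega(A)}$, so complex conjugation maps $W_\omega$ isometrically onto $W_{\overline\omega}$. Two cases occur. If $\omega=\overline\omega$, then $\omega(A)\in\{+1,-1\}$ for every $A$, the space $W_\omega$ is conjugation invariant, and hence $V_\omega:=W_\omega\cap\R^n$ is a real subspace with $\dim_\R V_\omega=\dim_\C W_\omega$ on which every $A\in\mathcal S$ acts as $\omega(A)\,I=\pm I$; here I simply fix an arbitrary orthonormal basis of $V_\omega$. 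If $\omega\neq\overline\omega$, then $W_\omega\perp W_{\overline\omega}$, the sum $W_\omega\oplus W_{\overline\omega}$ is conjugation invariant of complex dimension $2k$ with $k=\dim_\C W_\omega$, and hence $(W_\omega\oplus W_{\overline\omega})\cap\R^n$ has real dimension $2k$. Choosing a Hermitian-orthonormal basis $w_1,\dots,w_k$ of $W_\omega$, I set $x_j=\tfrac1{\sqrt2}(w_j+\overline{w_j})$ and $y_j=\tfrac{i}{\sqrt2}(w_j-\overline{w_j})$; these are real vectors, and a short computation using $\langle w_i,\overline{w_j}\rangle=0$ (which holds because $W_\omega\perp W_{\overline\omega}$) shows that $x_1,\dots,x_k,y_1,\dots,y_k$ is a real orthonormal basis of $(W_\omega\oplus W_{\overline\omega})\cap\R^n$.

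The decisive point is that, because $w_j$ is a \emph{joint} eigenvector, for every $A\in\mathcal S$ one has $Aw_j=\omega(A)w_j$ with $\omega(A)=\cos\theta_\omega(A)+i\sin\theta_\omega(A)$ for a unique $\theta_\omega(A)\in[0,2\pi)$; writing $w_j=\tfrac1{\sqrt2}(x_j-iy_j)$ and separating real and imaginary parts of $Aw_j=\omega(A)w_j$ (legitimate since $A$ is real) yields $Ax_j=\cos\theta_\omega(A)\,x_j+\sin\theta_\omega(A)\,y_j$ and $Ay_j=-\sin\theta_\omega(A)\,x_j+\cos\theta_\omega(A)\,y_j$, i.e.\ the matrix of $A$ in the ordered pair $(x_j,y_j)$ is exactly $R(\theta_\omega(A))$ — and the \emph{same} angle $\theta_\omega(A)$ serves for all $A$ simultaneously, because $x_j,y_j$ are fixed once and for all. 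Taking $Q$ to be the real orthogonal matrix whose columns are first the chosen orthonormal bases of all the self-conjugate spaces $V_\omega$ and then all the pairs $(x_j,y_j)$ arising from the conjugate pairs $\{\omega,\overline\omega\}$, we get $Q^TAQ=\Lambda(A)\oplus R(\theta_1(A))\oplus\dots\oplus R(\theta_q(A))$ for all $A\in\mathcal S$, where the leading block $\Lambda(A)=\diag(\pm1,\dots,\pm1)$ collects the actions on the $V_\omega$ and $q$ is the total number of such pairs.

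I expect the genuine content to lie entirely in the two bookkeeping verifications that make the real basis behave uniformly: the orthonormality of $\{x_j,y_j\}$, which rests on $\langle w_i,\overline{w_j}\rangle=0$ together with the conjugation identity $\langle\overline u,\overline v\rangle=\overline{\langle u,v\rangle}$, and the fact that a conjugation-invariant complex subspace of $\C^n$ of dimension $2k$ meets $\R^n$ in a real subspace of dimension $2k$. The only slightly non-elementary input, simultaneous unitary diagonalisability of a commuting normal family, is classical, and its extension from a finite to a possibly infinite family is immediate from finite dimensionality of $\C^{n\times n}$; everything else is routine.
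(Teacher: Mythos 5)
The paper offers no proof of this statement at all: it is quoted as a classical fact with a reference to Horn--Johnson (Corollary 2.5.11(c) and Theorem 2.5.15). Your argument is correct and is essentially the standard proof behind that citation --- complexify, simultaneously unitarily diagonalise the commuting unitary family into joint eigenspaces $W_\omega$, and then realify by taking real forms of the self-conjugate eigenspaces and real--imaginary pairs $x_j,y_j$ from the conjugate pairs $\{\omega,\overline\omega\}$, the key points (orthogonality $\langle w_i,\overline{w_j}\rangle=0$, the reduction of an infinite family to a finite one via finite dimensionality, and the uniformity of the angle $\theta_\omega(A)$ across all $A$) all being handled correctly. Nothing further is needed.
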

Let $\sigma_{\min}(M)$ and $\norm{M}$ denote the minimum singular value and the Frobenius norm of a matrix $M$, respectively. We have the following singular value inequality, see Corollary 9.6.7 in \cite{Bernstein2009}. 
\begin{Theorem}\label{Theorem:singularvalueinequality}
Suppose $A,B\in\C^{d\times d}$. Then 
\[\norm{AB}\ge \sigma_{\min}(A)\norm{B}\quad\text{and}\quad \norm{AB}\ge \norm{A}\sigma_{\min}(B). \]
\end{Theorem}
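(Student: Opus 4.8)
The plan is to deduce both inequalities from the elementary fact that $\norm{Ax}_2 \ge \sigma_{\min}(A)\norm{x}_2$ for every $x \in \C^d$. First I would establish this: picking a singular value decomposition $A = U\Sigma V^*$ with $U,V$ unitary and $\Sigma = \diag(\sigma_1,\dots,\sigma_d)$, $\sigma_d = \sigma_{\min}(A)$, and setting $y = V^*x$ so that $\norm y_2 = \norm x_2$, one gets
\[
\norm{Ax}_2^2 = \norm{\Sigma y}_2^2 = \sum_{i=1}^d \sigma_i^2 \abs{y_i}^2 \ge \sigma_{\min}(A)^2 \norm y_2^2 = \sigma_{\min}(A)^2 \norm x_2^2 .
\]

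For the first inequality I would decompose $B$ into its columns $b_1,\dots,b_d \in \C^d$, so that the columns of $AB$ are $Ab_1,\dots,Ab_d$. Since the squared Frobenius norm of a matrix is the sum of the squared Euclidean norms of its columns, the vector estimate above yields
\[
\norm{AB}^2 = \sum_{j=1}^d \norm{Ab_j}_2^2 \ge \sigma_{\min}(A)^2 \sum_{j=1}^d \norm{b_j}_2^2 = \sigma_{\min}(A)^2 \norm{B}^2 ,
\]
which is the claim. For the second inequality I would apply the first one to the conjugate transposes, using that both the Frobenius norm $\norm\fdot$ and the set of singular values are invariant under $M \mapsto M^*$:
\[
\norm{AB} = \norm{(AB)^*} = \norm{B^*A^*} \ge \sigma_{\min}(B^*)\norm{A^*} = \sigma_{\min}(B)\norm{A} .
\]

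There is essentially no obstacle here, as the statement is a textbook consequence of the singular value decomposition. The only points requiring minor care are using conjugate transposes rather than transposes in the complex setting, and recording that $\norm\fdot$ and $\sigma_{\min}(\fdot)$ are unchanged under passage to the conjugate transpose — which is precisely what reduces the second inequality to the first.
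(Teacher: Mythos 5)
Your proof is correct. The paper does not actually prove this statement --- it is quoted as a known auxiliary result with a reference to Corollary 9.6.7 in Bernstein's \emph{Matrix Mathematics} --- so there is no argument to compare against; your SVD-based derivation (the vector bound $\norm{Ax}_2\ge\sigma_{\min}(A)\norm{x}_2$, summed over the columns of $B$ to get the Frobenius-norm estimate, followed by passage to conjugate transposes for the second inequality) is a standard and complete self-contained verification of the cited fact.
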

Kronecker's approximation theorem, see \eg Corollary 2 on page 20 in \cite{Hlawka1991}, reads:
\begin{Theorem}\label{Theorem:Kronecker}
For each irrational number $\alpha$ the set of numbers $\set{\alpha n\text{ reduced modulo 1}}{n\in\N}$ is dense in the whole interval $[0,1)$.
\end{Theorem}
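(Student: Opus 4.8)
The plan is to give the classical Dirichlet pigeonhole argument: first I would produce a single multiple $q\alpha$ whose distance to the integers is positive but arbitrarily small, and then step around the circle $\R/\Z$ in increments of that size to reach any prescribed target point.

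Fix an irrational $\alpha$, a target $x \in [0,1)$ and a tolerance $\epsilon > 0$; the goal is to exhibit some $n \in \N$ with $\dist(n\alpha - x, \Z) < \epsilon$. First I would choose $N \in \N$ with $1/N < \epsilon$ and consider the $N+1$ fractional parts $\{0\alpha\}, \{1\alpha\}, \dots, \{N\alpha\}$ in $[0,1)$, together with the partition of $[0,1)$ into the $N$ subintervals $[k/N, (k+1)/N)$, $k = 0, \dots, N-1$. By the pigeonhole principle two of these points, say $\{i\alpha\}$ and $\{j\alpha\}$ with $0 \le i < j \le N$, fall into a common subinterval, so $|\{j\alpha\} - \{i\alpha\}| < 1/N$. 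Writing $q = j - i \ge 1$ and $p = \lfloor j\alpha\rfloor - \lfloor i\alpha\rfloor$, one has $q\alpha - p = \{j\alpha\} - \{i\alpha\}$, whence the quantity $\gamma := \dist(q\alpha, \Z)$ satisfies $\gamma < 1/N$. Because $\alpha$ is irrational and $q \ge 1$, the number $q\alpha$ is not an integer, so in fact $0 < \gamma < 1/N$.

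The second step is to march around the circle. In $\R/\Z$ the element $q\alpha$ equals $\pm\gamma$, so consecutive points $\{mq\alpha\}$ and $\{(m+1)q\alpha\}$ sit at circle-distance exactly $\gamma$. Taking $m = 1, 2, \dots, K$ with $K = \lceil 1/\gamma\rceil$, these points traverse the whole circle once in steps of size $\gamma$ and therefore form a $\gamma$-net of $[0,1)$: every $x$ lies within circle-distance $\gamma < 1/N < \epsilon$ of some $\{mq\alpha\}$. Setting $n = mq \in \N$ yields $\dist(n\alpha - x, \Z) < \epsilon$, the desired approximation; as $x$ and $\epsilon$ were arbitrary, the orbit $\set{n\alpha \bmod 1}{n\in\N}$ is dense in $[0,1)$.

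There is no serious obstacle here, the result being entirely elementary; the only point needing a little care is the passage from the pigeonhole estimate on two multiples to the uniform covering. Concretely, one must record that the step $q\alpha$ has circle-distance strictly less than $1/N$ (using irrationality precisely to keep it nonzero, so that the step does not collapse) and then verify that $\lceil 1/\gamma\rceil$ equally spaced increments of size $\gamma$ sweep past every point of $[0,1)$, including a point landing in $[0,\gamma)$ right after the first wrap-around, so that density holds at the endpoint as well. Since the statement is standard, in the paper it is simply quoted from \cite{Hlawka1991}; the argument above is the proof underlying that reference.
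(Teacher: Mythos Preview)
Your proof is correct and is precisely the classical Dirichlet pigeonhole argument for Kronecker's theorem. As you yourself note at the end, the paper does not prove this statement at all: Theorem~\ref{Theorem:Kronecker} sits in the appendix of auxiliary results and is merely quoted with a reference to \cite{Hlawka1991}. So there is nothing to compare against; your write-up simply supplies what the paper outsources.

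One tiny cosmetic point: when you say the $K=\lceil 1/\gamma\rceil$ iterates form a $\gamma$-net, the conclusion you actually need (and get) is that every $x$ lies at circle-distance at most $\gamma$ from some $\{mq\alpha\}$, and then $\gamma<1/N<\epsilon$ gives the strict inequality $\dist(n\alpha-x,\Z)<\epsilon$. Your phrasing ``within circle-distance $\gamma$'' already conveys this, but making the ``$\le\gamma<\epsilon$'' chain explicit would remove any ambiguity about strictness.
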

We also need the following minimax theorem of Tur\'{a}n on generalized power sums, see Theorem 11.1 on page 126 in \cite{Turan1984}.
\begin{Theorem}\label{Theorem:TuranTheorem}
Let $b_1,\dots,b_n,z_1,\dots,z_n\in\C$.
If $m$ is a nonnegative integer and the $z_j$ are restricted by
\[\frac{\min_{\mu\neq\nu}\abs{z_\mu-z_\nu}}{\max_j\abs{z_j}}\ge\delta\,(>0),\quad z_j\neq0\]
then the inequality
\[\max_{\nu=m+1,\dots,m+n}\frac{\abs[\big]{\sum_{j=1}^nb_jz_j^\nu}}{\sum_{j=1}^n\abs{b_j}\,\abs{z_j}^\nu}\ge\frac1n\parens[\bigg]{\frac\delta2}^{n-1}\]
holds.
\end{Theorem}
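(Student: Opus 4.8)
The plan is to derive the inequality from a Lagrange‑interpolation identity for the power sums together with a contrapositive counting argument (this is essentially Turán's method). First I would normalise. Both $\delta$ and the quotient $\bigl|\sum_j b_j z_j^\nu\bigr|\,/\,\sum_j|b_j|\,|z_j|^\nu$ are invariant under the rescaling $z_j\mapsto\lambda z_j$ ($\lambda\ne0$), so I may assume $\max_j|z_j|=1$; then $|z_j|\le1$ for all $j$ and $|z_\mu-z_\nu|\ge\delta$ whenever $\mu\ne\nu$. Replacing each $b_j$ by $b_jz_j^m$ turns the power sums with index $\nu\in\{m+1,\dots,m+n\}$ into those with $\nu\in\{1,\dots,n\}$ and leaves the quotient unchanged (and the right‑hand side does not involve $m$), so I may take $m=0$. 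Since the statement is vacuous when all $b_j$ vanish, I also assume $\varphi(\nu):=\sum_j|b_j|\,|z_j|^\nu>0$.

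Set $g(\nu):=\sum_j b_jz_j^\nu$. For each $k$ introduce
\[
L_k(x)=\frac{x\prod_{l\ne k}(x-z_l)}{z_k\prod_{l\ne k}(z_k-z_l)}=\sum_{\nu=1}^{n}c_\nu^{(k)}x^\nu ,
\]
which is well defined since the $z_j$ are distinct and nonzero, has degree $n$ with vanishing constant term, and satisfies $L_k(z_k)=1$ and $L_k(z_l)=0$ for $l\ne k$. Applying the functional $\sum_\nu c_\nu^{(k)}(\,\cdot\,)$ to $g$ gives the identity $b_k=\sum_{\nu=1}^{n}c_\nu^{(k)}g(\nu)$. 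Expanding $x\prod_{l\ne k}(x-z_l)$ in elementary symmetric functions yields $|c_\nu^{(k)}|\le e_{n-\nu}(\{|z_l|\}_{l\ne k})\,/\,(|z_k|\prod_{l\ne k}|z_k-z_l|)$, hence $|z_k|\,|c_\nu^{(k)}|\le e_{n-\nu}(\{|z_l|\}_{l\ne k})\,/\,\prod_{l\ne k}|z_k-z_l|$; the decisive point is that the possibly tiny factor $|z_k|$ has cancelled.

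For the contrapositive I would assume $|g(\nu)|<\varepsilon_0\,\varphi(\nu)$ for every $\nu\in\{1,\dots,n\}$, with $\varepsilon_0:=\tfrac1n(\delta/2)^{n-1}$, and reach a contradiction. From the identity,
\[
|b_k|\,|z_k|\le\sum_{\nu=1}^{n}|z_k|\,|c_\nu^{(k)}|\,|g(\nu)|<\frac{\varepsilon_0}{\prod_{l\ne k}|z_k-z_l|}\sum_{j=1}^{n}|b_j|\sum_{\nu=1}^{n}e_{n-\nu}(\{|z_l|\}_{l\ne k})\,|z_j|^\nu .
\]
Using $\sum_{\nu=1}^{n}e_{n-\nu}(\{|z_l|\}_{l\ne k})\,t^{\nu}=t\prod_{l\ne k}(t+|z_l|)$ and $|z_j|+|z_l|\le2$ (here the normalisation enters), the inner sum at $t=|z_j|$ is at most $2^{\,n-1}|z_j|$; combined with $|z_k-z_l|\ge\delta$ this gives $|b_k|\,|z_k|<\varepsilon_0\,(2/\delta)^{\,n-1}\sum_{j}|b_j|\,|z_j|$. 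Writing $\Sigma:=\sum_j|b_j|\,|z_j|>0$ and summing over $k=1,\dots,n$, we get $\Sigma<n\varepsilon_0(2/\delta)^{\,n-1}\Sigma=\Sigma$, which is absurd. Hence $\max_{1\le\nu\le n}|g(\nu)|/\varphi(\nu)\ge\varepsilon_0$, which after undoing the reductions is the asserted bound.

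The step I expect to be the genuine obstacle is keeping the estimate uniform over the moduli $|z_j|$: the naive bound $\sum_j|b_j|\,|z_j|^\nu\le\sum_j|b_j|$ discards a factor $1/|z_k|$ coming from the denominator of $L_k$ and collapses when some $|z_k|$ is small. The remedy — multiplying the $k$‑th identity by $z_k$ and carrying the weights $|z_j|$ through the symmetric‑function computation so that the right‑hand side reassembles as $\sum_j|b_j|\,|z_j|$ — is the one delicate point; the reductions, the interpolation identity, and the final summation over $k$ are routine.
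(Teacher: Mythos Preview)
Your argument is correct. The paper does not give its own proof of this theorem: it is listed in the appendix as an auxiliary result and simply cited from Tur\'{a}n's monograph (Theorem~11.1 in \cite{Turan1984}), so there is nothing to compare line by line. What you have written is, in fact, essentially the classical proof from Tur\'{a}n's power sum method: normalise to $\max_j|z_j|=1$, reduce to $m=0$ by absorbing $z_j^m$ into the coefficients, recover each $b_k$ from the power sums $g(1),\dots,g(n)$ via the Lagrange interpolant $L_k(x)=x\prod_{l\ne k}(x-z_l)\big/\bigl(z_k\prod_{l\ne k}(z_k-z_l)\bigr)$, and close with a counting contradiction after summing over $k$.

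The one genuinely delicate point you isolate is also the right one. If one does not multiply the identity $b_k=\sum_\nu c_\nu^{(k)}g(\nu)$ by $|z_k|$ before estimating, the denominator $|z_k|$ hidden in $c_\nu^{(k)}$ ruins uniformity when some $|z_k|$ is small. Your remedy---carrying the weight $|z_k|$ through and using the generating function identity $\sum_{\nu=1}^{n}e_{n-\nu}(\{|z_l|\}_{l\ne k})\,t^{\nu}=t\prod_{l\ne k}(t+|z_l|)\le 2^{n-1}t$ at $t=|z_j|$---exactly reassembles the right-hand side as a multiple of $\Sigma=\sum_j|b_j|\,|z_j|$, after which the separation hypothesis $|z_k-z_l|\ge\delta$ gives $|b_k|\,|z_k|<\varepsilon_0(2/\delta)^{n-1}\Sigma$ and summation over $k$ yields $\Sigma<\Sigma$. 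Two minor bookkeeping remarks: the strict inequality survives the summation because the leading coefficient $c_n^{(k)}=\bigl(z_k\prod_{l\ne k}(z_k-z_l)\bigr)^{-1}$ is nonzero and $\varphi(n)>0$; and the assumption that not all $b_j$ vanish is indeed needed to make the quotient in the statement well defined, so your reduction is harmless.
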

Finally, we include a short argument showing that the two seminorms $\norm{\Pi_{\rm rot} \nabla \fdot}_{L^2(\Omega)}$ and $\norm{\nabla \Pi_{\rm iso} \fdot}_{L^2(\Omega)}$ considered in the introduction are equivalent: 
\begin{align}\label{eq:PirotD-DPiiso-equiv}
  \norm{\Pi_{\rm rot} \nabla u}_{L^2(\Omega)} 
  \le \norm{\nabla \Pi_{\rm iso} u}_{L^2} 
  \le C \norm{\Pi_{\rm rot} \nabla u}_{L^2(\Omega)}
\end{align}
for all $u \in H^1(\Omega,\R^d)$. 
The first inequality is clear. For the second, if $\Pi_{\rm iso} u = u - A \fdot - c$ and $\Pi_{\rm rot} \nabla u = \nabla u - A'$, then Poincar{\'e}'s inequality gives $\norm{u - A \fdot - c}_{L^2} \le \norm{u - A' \fdot - c'}_{L^2} \le C \norm{\nabla u - A'}_{L^2}$ for some $c' \in \R^d$ and hence $\norm{(A'-A) \fdot + c'-c}_{L^2} \le C \norm{\nabla u - A'}_{L^2}$, which implies $\norm{A'-A} \le C \norm{\nabla u - A'}_{L^2}$. Thus, $\norm{\nabla u - A}_{L^2} \le \norm{A' - A}_{L^2} + \norm{\nabla u - A'}_{L^2} \le C \norm{\nabla u - A'}_{L^2}$.

\end{appendix}

\bibliography{ObjStrcKornLit}

\end{document}